\chardef\coloryes=1 
\chardef\isitdraft=0
   \def\eqref#1{({\ref{#1}})}                
\definecolor{refkey}{rgb}{.3,0.3,0.3}
  \definecolor{refkey}{rgb}{.8,0.8,0.1}
\definecolor{labelkey}{rgb}{.9,0.6,0.1}
\begin{document}
\def\intint{\int\!\!\!\!\int}
\def\intinttext{\int\!\!\!\int}
\def\intintint{\int\!\!\!\!\int\!\!\!\!\int}
\def\intintintint{\int\!\!\!\!\int\!\!\!\!\int\!\!\!\!\int}
\def\ques{{\cor \underline{??????}\cob}}
\def\nto#1{{\coC \footnote{\em \coC #1}}}
\def\fractext#1#2{{#1}/{#2}}
\def\fracsm#1#2{{\textstyle{\frac{#1}{#2}}}}   
\newcommand\Gampl{\Gamma_{\textup{pl}}}
\newcommand\wtil{\tilde{w}}
\newcommand\Wtil{\tilde{W}}
\newcommand\vtil{\tilde{v}}
\newcommand\ptil{\tilde{p}}
\newcommand\phitil{\tilde{\phi}}
\newcommand\psitil{\tilde{\psi}}
\newcommand\qtil{\tilde{q}}
\newcommand\pbar{\overline{p}}
\newcommand\phibar{\overline{\phi}}
\newcommand\qbar{\overline{q}}
\newcommand\alphp{1-2k(\pbar+\ptil)}
\newcommand\regpar{\theta}
\def\R{\mathbb R}
\newcommand {\Dn}[1]{\frac{\partial #1  }{\partial N}}
\newcommand {\uld}[1]{\underline{d #1 }}
\def\gfcn{g}
\def\hfcn{h}
\def\lfcn{\ell}
\def\betaabs{{\beta_a}}
\def\gammaabs{{\gamma_a}}
\def\absbc{a}
\def\wttil{\tilde{\tilde{w}}}
\def\cor{{}}
\def\cog{{}}
\def\cob{{}}
\def\coe{{}}
\def\coA{{}}
\def\coB{{}}
\def\coC{{}}
\def\coD{{}}
\def\coE{{}}
\def\coF{{}}

\ifnum\coloryes=1

  \definecolor{coloraaaa}{rgb}{0.1,0.2,0.8}
  \definecolor{colorbbbb}{rgb}{0.1,0.7,0.1}
  \definecolor{colorcccc}{rgb}{0.8,0.3,0.9}
  \definecolor{colordddd}{rgb}{0.0,.5,0.0}
  \definecolor{coloreeee}{rgb}{0.8,0.3,0.9}
  \definecolor{colorffff}{rgb}{0.8,0.3,0.9}
  \definecolor{colorgggg}{rgb}{0.5,0.0,0.4}
  \definecolor{colorhhhh}{rgb}{0.6,0.6,0.6}

 \def\cog{\color{colordddd}}
 \def\coy{\color{colorhhhh}}
 \def\cogray{\color{colorhhhh}}
 \def\cob{\color{black}}

 \def\coe{\color{blue}}
 \def\cor{\color{red}}

 \def\coA{\color{coloraaaa}}
 \def\coB{\color{colorbbbb}}
 \def\coC{\color{colorcccc}}
 \def\coD{\color{colordddd}}
 \def\coF{\color{colorffff}}
 \def\coG{\color{colorgggg}}

\fi
\ifnum\isitdraft=1
   \chardef\coloryes=1 
   \baselineskip=17pt
   \input macros.tex
   \def\blackdot{{\color{red}{\hskip-.0truecm\rule[-1mm]{4mm}{4mm}\hskip.2truecm}}\hskip-.3truecm}
   \def\bdot{{\coC {\hskip-.0truecm\rule[-1mm]{4mm}{4mm}\hskip.2truecm}}\hskip-.3truecm}
   \def\purpledot{{\coA{\rule[0mm]{4mm}{4mm}}\cob}}
   \def\pdot{\purpledot}
  \definecolor{labelkey}{rgb}{.5,0.1,0.1}
\else
   \baselineskip=15pt
   \def\blackdot{{\rule[-3mm]{8mm}{8mm}}}
   \def\purpledot{{\rule[-3mm]{8mm}{8mm}}}
   \def\pdot{}
\fi

\def\tdot{\fbox{\fbox{\bf\tiny\coe I'm here; \today \ \currenttime}}}
\def\nts#1{{\hbox{\bf ~#1~}}} 
\def\nts#1{{\cor\hbox{\bf ~#1~}}} 
\def\ntsf#1{\footnote{\hbox{\bf ~#1~}}} 
\def\ntsf#1{\footnote{\cor\hbox{\bf ~#1~}}} 
\def\bigline#1{~\\\hskip2truecm~~~~{#1}{#1}{#1}{#1}{#1}{#1}{#1}{#1}{#1}{#1}{#1}{#1}{#1}{#1}{#1}{#1}{#1}{#1}{#1}{#1}{#1}\\}
\def\biglineb{\bigline{$\downarrow\,$ $\downarrow\,$}}
\def\biglinem{\bigline{---}}
\def\biglinee{\bigline{$\uparrow\,$ $\uparrow\,$}}

\newcommand{\Dnu}{ \partial_{\nu}}
\renewcommand{\Gampl}{\Gamma_{pl}}
\newcommand{\Gamref}{\Gamma_{0N}}
\newcommand{\Omref}{\Omega_{0}}
\newcommand{\Gamflat}{B}
\newcommand{\lref}{\ell_0}
\newcommand{\regop}{A}
\def\mbar{{\overline M}}
\def\tilde{\widetilde}
\definecolor{darkgreen}{rgb}{0.,0.6,0.4}
\definecolor{darkcyan}{rgb}{0.,0.7,0.7}
\definecolor{darkpurple}{rgb}{0.4,0,0.4}
\definecolor{darkred}{rgb}{0.6,0,0}
\definecolor{grey}{rgb}{0.5,0.5,0.5}
\def\ntAT#1{{\textcolor{blue}{\bf ~#1~}}} 
\def\ntBK#1{{\textcolor{darkgreen}{\bf ~#1~}}} 
\def\ntregl#1{{\textcolor{darkred}{\bf [on regularity of $\lfcn$:] ~#1~}}} 
\def\colAT#1{\textcolor{blue}{ ~#1~}} 
\def\colBK#1{\textcolor{darkgreen}{ ~#1~}} 
\def\chgd#1{\textcolor{darkpurple}{ ~#1~}} 
\def\rmv#1{\textcolor{grey}{ ~#1~}} 
\def\Delpl{\Delta_{pl}}
\def\Ep{\mathcal{E}^p}
\def\Ew{\mathcal{E}^w}
\def\zeroT{0,T;}
\def\HminustwoGampl{H^2(\Gampl)^*}
\newtheorem{Theorem}{Theorem}[section]
\newtheorem{Corollary}[Theorem]{Corollary}
\newtheorem{Proposition}[Theorem]{Proposition}
\newtheorem{Lemma}[Theorem]{Lemma}
\newtheorem{Remark}[Theorem]{Remark}
\newtheorem{definition}{Definition}[section]
\def\theequation{\thesection.\arabic{equation}}
\def\endproof{\hfill$\Box$\\}
\def\square{\hfill$\Box$\\}
\def\comma{ {\rm ,\qquad{}} }            
\def\commaone{ {\rm ,\qquad{}} }         
\def\dist{\mathop{\rm dist}\nolimits}    
\def\sgn{\mathop{\rm sgn\,}\nolimits}    
\def\Tr{\mathop{\rm Tr}\nolimits}    
\def\div{\mathop{\rm div}\nolimits}    
\def\TT{R}
\def\supp{\mathop{\rm supp}\nolimits}    
\def\divtwo{\mathop{{\rm div}_2\,}\nolimits}    
\def\curl{\mathop{\rm curl}\nolimits}    
\def\dbar{\overline\partial}
\def\l{\langle}
\def\r{\rangle}
\def\plusdelta{+\delta}
\def\pd{+\delta}
\def\re{\mathop{\rm {\mathbb R}e}\nolimits}    
\def\indeq{\qquad{}\!\!\!\!}                     
\def\period{.}                           
\def\semicolon{\,;}                      
\newcommand{\cD}{\mathcal{D}}
\def\Damppl#1{}
\def\omegazero{\omega^0}
\def\omegaone{\omega^1}
%

\title{\mbox{Optimization of a Nonlinear Acoustics -- Structure Interaction Model}}

\author{ 
Barbara Kaltenbacher,
Amjad Tuffaha} 
\maketitle
\date{}
\bigskip

\bigskip
\indent Department of Mathematics\\
\indent University of Klagenfurt\\
\indent Klagenfurt, Austria\\
\indent e-mail: barbara.kaltenbacher@aau.at

\bigskip
\indent Department of Mathematics\\
\indent American University of Sharjah\\
\indent Sharjah, UAE\\
\indent e-mail: atufaha\char'100aus.edu

\begin{abstract}
In this paper, we consider a control/shape optimization problem of a nonlinear acoustics-structure interaction model of PDEs, whereby acoustic wave propagation in a chamber is governed  by the Westervelt equation, and the motion of the elastic part of the boundary is governed by a 4th order Kirchoff equation. We consider a quadratic objective functional capturing the tracking of prescribed desired states, with three types of controls:   1) An excitation control represented by prescribed Neumann data for the pressure on the excitation part of the boundary
2) A mechanical control represented by a forcing function in the Kirchoff equations and 3) Shape of the excitation part of the boundary represented by a graph function.
Our main result is the existence of solutions to the minimization problem, and 
the characterization of the optimal states  through an adjoint system of PDEs derived from the first-order optimality conditions. 
\end{abstract}

\section{Introduction}

\begin{figure}
\begin{center}
\begin{minipage}{0.4\textwidth}
\includegraphics[width=0.7\textwidth]{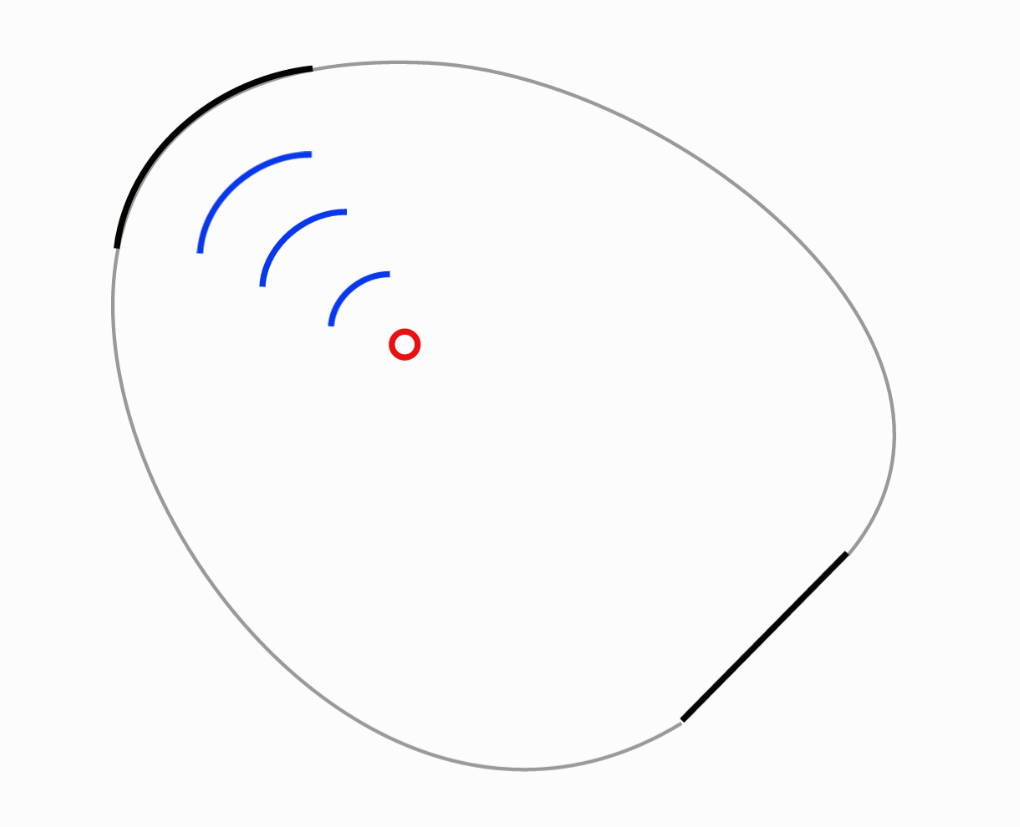}
\\[-26ex]
$\textcolor{darkgreen}{\Gamma_N}$: \,  excitation $\Dnu{p}=\textcolor{darkgreen}{g}$
\\[3ex]
\hspace*{3.9cm}$\Gamma_a$
\\[11ex]
\hspace*{0.5cm}$\Gamma_a$
\\[-3ex]
\hspace*{3.5cm}$\Gampl$: forcing \textcolor{darkgreen}{$h$}
\\[5ex] 
\end{minipage}
\hspace*{0.05\textwidth}
\begin{minipage}{0.4\textwidth}
\includegraphics[width=0.7\textwidth]{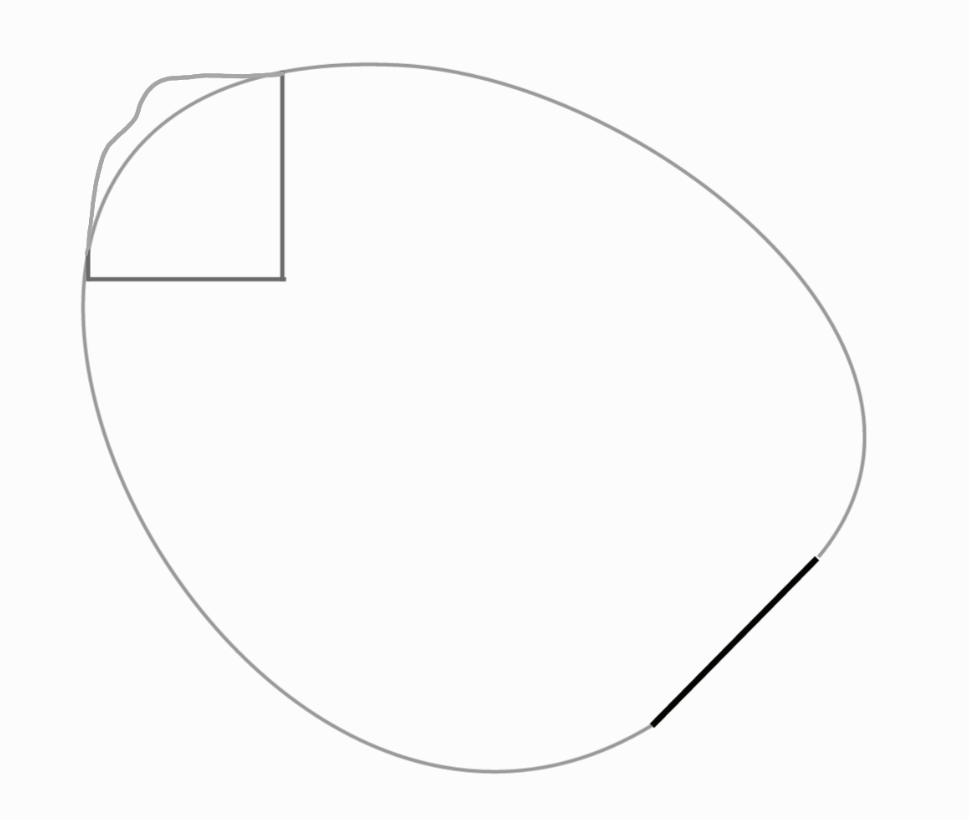}
\\[-25ex]
{\small $\Gamma_N$}
\\[0ex]
\hspace*{0.7cm}{\small $\Gamref$}
\\[0ex]
\hspace*{3.9cm}{\small $\Gamma_a$}
\\[-0.5ex]
\hspace*{0.7cm}{\small $\Gamflat$}
\\[8ex]
\hspace*{0.5cm}{\small $\Gamma_a$}
\\[-3ex]
\hspace*{3.5cm}{\small $\Gampl$}
\\[5ex] 
\end{minipage}
\end{center}
\caption{left: an exemplary experimental setup\label{fig:domain};
right: schematic of boundary parametrization}
\end{figure}

In this paper, we study a shape optimization problem involving an acoustics-structure interaction model. The model describes the propagation of nonlinear acoustic waves in a cavity subject to mechanical excitation through an elastic wall in addition to acoustic excitation through a rigid impermeable wall of 
definable 
shape. We consider the shape optimization control problem of minimizing some objective 
(e.g., tracking of a prescribed desired pressure and elastic wall displacement)
with respect to the mechanical and acoustic excitation functions in addition to the shape of the rigid excitation wall.
The model under study consists of the nonlinear Westervelt equation in the acoustic pressure variable defined on a three dimensional bounded domain, coupled with a 4th order Kirchoff equation in the transversal displacement variable describing the motion of the elastic part of the boundary. The two equations are connected through two mechanisms:  a transmission boundary condition matching the normal acoustic pressure gradient to the structural inertial term,  and a dynamic loading of acoustic pressure onto the plate equation. Neumann boundary conditions for the acoustic pressure are prescribed on part of the boundary in terms of a given boundary function $g$, and the remaining part of the boundary is endowed with absorbing boundary conditions, which signify a wave absorbing wall.

The well-posedness of the PDE model under consideration was studied by the authors in \cite{KaTu1}, where weak energy level solutions were obtained for the linearized model, and both local-in-time and global-in-time (when the plate equation is subjected to damping) strong solutions of the full nonlinear system were obtained for small data. In the present study, we consider a shape optimization/control problem where the objective is to minimize 
a tracking type objective function with some regularization in terms of Sobolev norms of the design functions.
There are three control variables in the problem under consideration: 1) The acoustic excitation function $\gfcn$ prescribed as Neumann data on the  excitation part of the boundary, 2) A mechanical forcing  function $\hfcn$ acting as an additional loading on the elastic wall 3) The graph function $\lfcn$ describing the shape of the excitation part of the boundary. 
We study the problem over a finite time horizon and
our main result is the existence of 
optimal control functions $\gfcn,\,\hfcn,\,\lfcn$ in a certain admissible class, and the existence of adjoint state variables satisfying an adjoint system of
PDEs derived from first order optimality conditions. We utilize a new variational formulation of the system to study the control problem at hand. This is the first mathematically oriented control treatment of a nonlinear structure-acoustic interaction model involving the Westervelt equation and includes both shape optimization and excitation control action. 

The mathematical study of structure-acoustics control problems is motivated by many applications in  engineering systems in which noise and vibration control are desirable objectives, cf., e.g.,  \cite{Bharat2020,Gerges2013,Maestrello1992}. 
There have been numerous works on mathematical treatment of  active control of  structure-acoustics. Thehe problem of active control of a PDE system that describes an acoustic chamber with an elastic wall appears in \cite{BanksSmith1995}. The system, based on a model used by NASA, consists of a linear wave equation coupled with an Euler-Bernoulli beam equation  forced by the acoustic pressure, and governing  the transversal motion of the elastic interface. Well-posedness and control theoretic results were obtained for the systems under piezoelectric control action \cite{AvalosLasiecka1996, lasiecka2002, LasieckaMarshand1997, LasieckaMarchand1998}. Studies of stability and long-term behavior of the system were conducted in, e.g., \cite{ Fahroo1999, AvalosGeredeli2016, AvalosGeredeli2018}. A boundary control model of a system comprising a wave equation with variable coefficients was also considered in \cite{Liu2022}. 
For control treatment of nonlinear/semilinear models of structure acoustics, we refer the reader to \cite{Liu2024}. On the other hand, boundary control analysis of the Westervelt equation has been already considered in
e.g. \cite{ClasonBK:2009}, while shape optimization problems were studied in \cite{BK-Peichl,KaltenbacherNikolic2017,MNWW2019, MN2022}.
For analysis and long-term behavior of solutions for the Westervelt equation with both Dirichlet- and Neumann-type boundary conditions, we refer the reader to \cite{KL09Westervelt,KLV11_inhomDir,KL12_Neumann,MeyerWilke13,SimonettWilke2017}. For  control of the Jordan-Moore-Gibson equation, an advanced model of nonlinear acoustics, see \cite{LZ2019} and \cite{BucciLasiecka2019, BongartiCharoenphonLasiecka20} for linearized models.

\section{Model and Problem Formulation}
We consider a coupled structure-acoustic system consisting of a Westervelt equation defined on a domain 
$\Omega \subset \mathbb{R}^{d}$, $d\in\{2,3\}$  and coupled with a linear Kirchoff plate equation defined on part of the boundary of the domain. 
Two control functions are used to control the dynamics of the system, one control is acting on the plate and the other takes the form of a Neumann boundary condition on another segment of the boundary.
Additionally, we consider design of the Neumann boundary shape.

The Westervelt equation in the variable $p$ reads
\begin{align}
(1- 2k p) p_{tt} - c^{2} \Delta p - b \Delta p_{t} = 2k(p_{t})^{2} ~~~ \Omega \times [0,T],
\end{align}
or equivalently
\begin{align}
((1- 2k p) p_t)_t - c^{2} \Delta p - b \Delta p_{t} = 0 ~~~ \Omega \times [0,T],
\end{align}
where $c$, $ k, b >0$ are given constant parameters. 

The boundary of the domain $\Omega$ is the union of three disjoint parts $\partial \Omega = \Gamma_{a} \cup \Gamma_{N} \cup \Gamma_{pl}$, representing absorbing, Neumann, and plate conditions.
The Westervelt equation is coupled with a 4th order plate equation in the $\tilde{w}$ variable defined on the interface $\Gamma_{pl}$
\begin{align}
\rho \tilde{w}_{tt} + \delta \Delta_{pl}^{2} \tilde{w} 
\Damppl{+ \beta (- \Delta_{pl})^{\gamma} \tilde{w}_t}
& = \kappa p_{t} +\hfcn
~~ \mbox{on} ~~~ \Gamma_{pl} ,
\end{align}
where $\tilde{w}=w_{t}$, $\Delta_{pl}$ is the Laplace Beltrami operator on $\Gamma_{pl}$, and $w$ is the mid surface displacement variable 
which we supplement with hinged boundary conditions
\begin{align}
\Delta_{pl} \tilde{w} = \tilde{w} = 0 ~~~ \mbox{on} ~~ \partial \Gamma_{pl} .
\end{align}

We also impose the following boundary conditions on each component
\begin{align}
\Dnu{p}+\absbc[p,p_t]:=\Dnu{p}+\betaabs p_{t}+\gammaabs p 
&=0~~~~& \mbox{on} ~~~~ \Gamma_{a} \\
\Dnu{p}  &=\gfcn ~~~~ &\mbox{on} ~~~~ \Gamma_{N} \\
 \Dnu{p}& = - \rho w_{tt} = -\rho\wtil_{t}
 ~~~~ &\mbox{on} ~~~~ \Gamma_{pl} ,
\end{align}
where $\nu$ denotes the unit outward normal to the boundary, while $\gfcn$ and $\hfcn$ are control functions. 

The functions $\betaabs\geq0$, $\gammaabs\geq0$ are known smooth boundary functions. In particular, they are also allowed to vanish, therewith enabling Neumann or impedance (Robin) boundary conditions on the fixed boundary part, while the shape of $\Gamma_N$ will be subject to optimization.

The absorbing boundary conditions on $\Gamma_a$ describe a non-reflecting or, in case of vanishing $\betaabs$, $\gammaabs$, sound-hard interface, while on the variable boundary part $\Gamma_N$, excitation by a transducer array,  is modeled by the function $\gfcn$. Additionally to that, also excitation via the forcing $\hfcn$ on the plate part of the boundary is allowed. 

\subsection{Variational Formulation}

The total pressure $p=\pbar+\ptil$ is decomposed into an extension $\pbar$ of the Neumann boundary data 
\footnote{If we only use $\hfcn$ as a control and set $\gfcn=0$, then the Neumann data extension $\pbar$ is not needed} 
and the remainder $\ptil$.
We decouple $\pbar$, defining it by 
\begin{equation}\label{pbar_decoup}
\begin{aligned}
\pbar_{tt}  - c^{2}\Delta \pbar - b \Delta \pbar_{t}  &= 0 ~~\mbox{in}~~ \Omega \times [0,T] \\
\Dnu{\pbar}+\absbc[\pbar,\pbar_t]&= 0 ~~\mbox{on}~~ \Gamma_{a} \times [0,T]  \\
\Dnu{ \pbar}&=\gfcn ~~\mbox{on}~~ \Gamma_{N} \times [0,T] \\
\Dnu{ \pbar} &= 0  ~~\mbox{on}~~ \Gampl \times [0,T] \\
\pbar(0)&=0, ~~\pbar_{t}(0)=0.
\end{aligned}
\end{equation}
and correspondingly define $(\ptil,\wtil)$ by
\begin{equation}\label{ptil_decoup}
\begin{aligned}
(1-2k(\pbar+\ptil))\, \ptil_{tt}  - c^{2}\Delta \ptil - b \Delta \ptil_{t}  
&=2k((\pbar+\ptil)_t)^2 +2k(\pbar+\ptil)\pbar_{tt} ~~\mbox{in}~~ \Omega \times [0,T] \\
 \Dnu{\ptil}+\absbc[\ptil,\ptil_t]
 &= 0  ~~\mbox{on}~~ \Gamma_{a} \times [0,T]  \\
\Dnu{\ptil}&=0 ~~\mbox{on}~~ \Gamma_{N} \times [0,T] \\
\Dnu{\ptil} &= - \rho \wtil_t  ~~\mbox{on}~~ \Gampl \times [0,T] \\
\rho\wtil_{tt} + \delta\Delpl^{2} \wtil 
\Damppl{+  \beta(-\Delpl)^{\gamma} \wtil_{t}} 
&= \kappa (\ptil_{t} + \pbar_{t}) +\hfcn  ~~\mbox{on}~~ \Gampl \times [0,T] \\
\tilde{p}(0)&= p_{0}, ~~\tilde{p}_{t}(0)= p_{1}\\
\tilde{w}(0)&= \tilde{w}_{0}, ~~\tilde{w}_{t}(0)= \tilde{w}_{1}.
\end{aligned}
\end{equation}

Note that as compared to \cite{KaTu1}, we assume that $\Gamma_D=\emptyset$ (that is, no Dirichlet boundary conditions imposed on any part of the boundary), which allows to avoid a certain regularity loss.

Differently from  \cite{KaTu1}, we consider the system \eqref{pbar_decoup}, \eqref{ptil_decoup} in an $L^2(\zeroT L^2(\Omega))^2\times L^2(\zeroT H^{2}_{\diamondsuit}(\Gampl)^*)$ sense as follows 
\begin{equation}\label{eqn:var_L2L2}
\begin{aligned}
&\int_0^T\Bigl\{
\int_\Omega \Bigl(
\bigl(\pbar_{tt}  - c^{2}\Delta \pbar - b \Delta \pbar_{t}\bigr)\,\qbar\\
&\qquad\qquad+\bigl((1-2k(\pbar+\ptil))\, \ptil_{tt}  - c^{2}\Delta \ptil - b \Delta \ptil_{t}  
-2k((\pbar+\ptil)_t)^2 -2k(\pbar+\ptil)\pbar_{tt}\bigr)\,\qtil\Bigr)\, dx\\
&\qquad +\tfrac{\rho}{\kappa}\int_{\Gampl}  \Bigl(
\bigl(\rho\wtil_{tt}
\Damppl{+ \beta(-\Delpl)^{\gamma} \wtil_{t} }
- \hfcn-\kappa(\pbar_{t}+\ptil_{t})\bigr)\vtil
+\delta\Delpl\wtil\, \Delpl\vtil \Bigr) \,dS
\\
&\qquad+ 
\int_{\Gamma_N}
(\Dnu{\pbar}-\gfcn)\,\mu_{N}\, dS
+\int_{\Gampl}
(\Dnu{\ptil} + \rho \wtil_t)\,\mu_{pl}\,dS
\Bigr\}\,dt\\
&=0 \qquad\text{ for all }(\qbar,\qtil,\vtil,\mu_N,\mu_{pl})\in Z
\end{aligned}
\end{equation}
with initial conditions $(\bar{p},\tilde{p},\tilde{w}(0))=(0,p_0,\tilde{w}_0)$, $(\bar{p}_t,\tilde{p}_t,\tilde{w}_t(0))=(0,p_1,\tilde{w}_1)$  
on the function spaces
\begin{equation}\label{fcnsp}
\begin{aligned}
& H^2_{\Delta,1}(\Omega) = \{p\in L^2(\Omega) \,:\, p, \ \Delta p\in L^2(\Omega), \ \Dnu p\vert_{\partial\Omega}
\in H^s(\partial\Omega))\}\\
& H^{2}_{\diamondsuit}(\Gampl) = \{w \in H^{2}(\Gampl) \,:\, w = \Delta_{pl} w = 0 \text{ on } \partial \Gampl \}\\
&U=U(\Omega)\\
&=\{p\in H^2(\zeroT L^2(\Omega))\cap 
H^1(\zeroT H^2_{\Delta,1}(\Omega))
\,:\, 
\Dnu{p}+\absbc[p,p_t]
=0\text{ on } \Gamma_{a}, \ \Dnu{p}=0\text{ on } \Gampl\}\\
&\qquad\times\{p\in H^2(\zeroT L^2(\Omega))\cap 
H^1(\zeroT H^2_{\Delta,1}(\Omega))
\,:\, 
\Dnu{p}+\absbc[p,p_t]
=0\text{ on } \Gamma_{a}, \ \Dnu{p}=0\text{ on } \Gamma_{N}\} \\
&\qquad\times\bigl(H^2(\zeroT H^{2}_{\diamondsuit}(\Gampl)^*)
\Damppl{\cap H^1(\zeroT H^{\gamma}_{\diamondsuit}(\Gampl))}
\cap L^2(\zeroT H^{2}_{\diamondsuit}(\Gampl)) \bigr)
\\
&Z=Z(\Omega)\\
&=\bigl(L^2(\zeroT L^2(\Omega))\bigr)^2\times L^2(\zeroT H^{2}_{\diamondsuit}(\Gampl))
\times L^2(\zeroT H^{-s}(\Gamma_N)) \times L^2(\zeroT 
L^2(\Gampl)
)
\end{aligned}
\end{equation}
for some fixed $s\in(0,1/2]$, cf. \eqref{enid_pp}, \eqref{enest_pp}.
On a $C^{1,1}$ domain $\Omega\subseteq\mathbb{R}^d$, $d\in\{1,2,3\}$ we have $H^2_{\Delta,1}(\Omega)\subseteq H^{s+3/2}(\Omega)\subseteq L^\infty(\Omega)\cap W^{1,3/(1-s)}(\Omega)$, cf. \cite[Lemma 4.2]{KaTu1}. 
Note that $H^2_{\Delta,1}(\Omega)\subseteq L^\infty(\Omega)$ can also be obtained for an only Lipschitz domain $\Omega$ by means of Stampacchia's / De Giorgi's technique, cf., e.g., 
\cite[Proposition 4.1]{Consiglieri:2014},
\cite[Theorem 4.5 and Section 7.2.1]{Troeltzsch2010} and we are also going to make use of this here to keep regularity requirements on the boundary (in particular also on its variable part) low.

\subsection{A general control/shape optimization problem}\label{sec:opt}

To include the shape $\Gamma_N$ in the optimization (as it is formally done here) we need to either use some kind of shape calculus (as, e.g. in \cite{BK-Peichl}; we also refer to the standard shape optinization references therein) or some parameterization of $\Gamma_N$ as the graph of a function.
We choose the latter option for the following reasons:
(a) When optimizing $\Gamma_N$ for fixed $\gfcn$ it allows to easily define the Neumann condition on the deformed boundary by composition with the parameterization;\footnote{In \cite{BK-Peichl} (see line 8 of Sec.4) this is circumvented by using a function $\bar{g}$ that is defined on all of $\Omega$ and defining $\gfcn:=\bar{g}\vert_{\Gamma_N}$ as its restriction to $\Gamma_N$}; 
(b) The typical deformations that we have in mind for applications (e.g., ultrasound in a reverberant cavity \cite{BrownZhangTreebyBeardCox2019,kunyansky2013photoacoustic}) are covered by this setting;
(c) It allows to handle joint optimization of the controls $\gfcn$, $\hfcn$ and the shape in a common framework of optimal control, as e.g. treated in detail in \cite{Troeltzsch2010};
(d) It requires a limited amount of technicalities.

Concretely, we parameterize $\Gamma_N$ by using a  flat domain $\Gamflat$ of the parameterization and without loss of generality define the $z$ direction to be the direction of variability so that $\Gamma_N(\lfcn)=\{(x',\lfcn(x'))\, : \, x'\in \Gamflat\}$,  $\Gamref=\Gamma_N(\lref)$ for some open simply connected set $\Gamflat\subseteq\mathbb{R}^{d-1}$, so that $\overline{\Omega}=\overline{\Omega(\lfcn)}=\overline{\Omega_{\text{fix}}}\cup\overline{\Omega_{\text{var}}(\lfcn)}$,
$\partial\Omega=\Gamma_a\cup\Gamma_N(\lfcn)\cup\Gampl$, 
$\Omega_{\text{fix}}\cap\Omega_{\text{var}}(\lfcn)=\emptyset$
where 
\[
\Omega_{\text{var}}(\lfcn)=
\{(x',z)\, : \, x'\in \Gamflat, \, 0<z<\lfcn(x')\}, \quad 
\Gamma_N(\lfcn)=
\{(x',\lfcn(x'))\, : \, x'\in \Gamflat\}. 
\]
Thus we can consider $\check{\gfcn}$ as a function defined on $\Gamref= \{(x',\lref(x'))\, : \, x'\in\Gamflat\}$ and determine its value on $\Gamma_N$ by $\gfcn(x',\lfcn(x'))=\check{\gfcn}(x',\lref(x'))$, $x'\in\Gamflat$, which solves the above-mentioned problem of defining a 
flux function on an unknown $\Gamma_N$
and of needing a hold-all domain containing all possible domains $\Omega(\lfcn)$.
Of course this simplification also comes with the drawback of less flexibility; in particular, the width of $\Gamma_N$ cannot be modified this way; this could be amended by adding a parametrization of $\partial\Gamflat$ as a design variable.
In order to avoid degeneracy of the variable domain $\Omega_{\text{var}}(\lfcn)$, throughout this paper we will assume boundedness from below of the reference parametrization
\begin{equation}\label{ass_l0}
    \lref >0\text{ and }\|\tfrac{1}{\lref }\|_{L^\infty(\Gamflat)}<\infty
\end{equation}
as well as closeness of $\lfcn$ to $\lref $
\begin{equation}\label{ass_diffl}
    \|\lfcn-\lref \|_{L^\infty(\Gamflat)}\leq\tfrac12 \|{\lref }\|_{L^\infty(\Gamflat)}
\end{equation}
which implies 
\begin{equation}\label{bdd_lfcn}
\lfcn>0\text{ and }\|\tfrac{1}{\lfcn}\|_{L^\infty(\Gamflat)}\leq 2\|\tfrac{1}{\lref }\|_{L^\infty(\Gamflat)}<\infty.
\end{equation} 
In order to guarantee Lipschitz smoothness (as needed for using Stampacchia's method) of the transition between the variable Neumann boundary and the fixed boundary part, we impose (see also \eqref{defG0} below)
\begin{equation}\label{continuity_ell} 
\text{tr}_{\partial \Gamflat}(\lfcn-\lref)=0, \quad \text{tr}_{\partial \Gamflat}\Dnu{(\lfcn-\lref)}=0
\end{equation}
We point to Figure~\ref{fig:domain} for an exemplary setup.

In \eqref{eqn:var_L2L2}, and the corresponding function spaces \eqref{fcnsp}, the domain $\Omega$ and the boundary part $\Gamma_N$ depend on $\lfcn$. To be able to vary the unknown functions on fixed function spaces in an optimization procedure, we first of all  write the PDE (or actually its variational form \eqref{eqn:var_L2L2}) in terms of the reference domain $\Omref$ and boundary $\Gamref$.

To this end, we make the transformation of variables 
\begin{equation}\label{transf}
\varphi\mapsto\check{\varphi}, \qquad \varphi(x)=\varphi(x',z)=\check{\varphi}(x',\check{z})=\check{\varphi}(\check{x}),
\end{equation}
where  
\[
\begin{aligned}
\check{x}=\begin{cases}
x& \text{ for } x=\check{x}\in\Omega_{\text{fix}}\\
(x',\check{z})\in\Omega_{\text{var}}(\lref ),  & \text{ for }x=(x',z)\in\Omega_{\text{var}}(\lfcn) 
\end{cases}
\text{ with } z=\frac{\lfcn(x')}{\lref(x')} \check{z}
\in[0,\ell(x')],
\quad \check{z}\in[0,\ell_0(x')].
\end{aligned}
\]
The transformation \eqref{transf} implies
\begin{equation}\label{transf_contd}
\begin{aligned}
& \varphi_z(x)= 
\begin{cases}
\check{\varphi}_{\check{z}}(\check{x})& \text{ for } x=\check{x}\in\Omega_{\text{fix}}\\
\frac{\lref(x')}{\lfcn(x')} \check{\varphi}_{\check{z}}(x',\check{z})
& \text{ for }x=(x',z)\in\Omega_{\text{var}}(\lfcn), \ \check{z}=\frac{\lref(x')}{\lfcn(x')} z 
\end{cases}\\
&\nabla \varphi(x)
=\left(\begin{array}{c}\nabla_{x'}\varphi(x)\\\varphi_z(x)\end{array}\right)
=:M_\lfcn(\check{x}) \check{\nabla}\check{\varphi}(\check{x}),
\qquad  M_\lfcn=\left(\begin{array}{cc}I&   - \omegazero_\lfcn\check{z}\nabla_{x'}(\tfrac{\lfcn}{\lref })(x')\\0&\omegazero_\lfcn\end{array}\right),\\
&\Delta \varphi(x)=\Delta_{x'}\varphi(x)+\varphi_{zz}(x)=:\check{D}^2_\lfcn\check{\varphi}(\check{x}), \\
&\check{D}^2_\lfcn\check{\varphi}(\check{x})
=\Delta_{x'}\check{\varphi}(\check{x})
- 2\nabla_{x'} \check{\varphi}_{\check{z}}(\check{x})\cdot\nabla_{x'}(\tfrac{\lfcn}{\lref })(x')\, (\tfrac{\lref }{\lfcn})(x')\check{z}
+\check{\varphi}_{\check{z}\check{z}}(\check{x})(\tfrac{\lref }{\lfcn})^2(x')
\check{z}^2|\nabla_{x'}(\tfrac{\lfcn}{\lref })(x')|^2\\
&\qquad\qquad  - \check{\varphi}_{\check{z}}(\check{x})\, (\tfrac{\lref }{\lfcn})(x')\check{z}\Delta_{x'}(\tfrac{\lfcn}{\lref })(x')
+\omegazero_\lfcn(\check{x})^2 \check{\varphi}_{\check{z}\check{z}}(\check{x})\\
&\int_{\Omega_{\text{var}}(\lfcn)} \varphi(x)\, dx =
\int_{\Omega_{\text{var}}(\lref)} \frac{\lfcn(x')}{\lref(x')}\,\check{\varphi}(\check{x})\, d\check{x} , \quad
\int_{\Omega_{\text{var}}(\lfcn)} \varphi(x)\, dx = \int_{\Omega(\lref)}
\frac{1}{\omegazero_\lfcn(\check{x})}
\,\check{\varphi}(\check{x})\, d\check{x}\\
&\int_{\Gamma_N(\lfcn)}\varphi(x)\, dS(x)
= \int_{\Gamflat} \varphi(x',\lfcn(x'))\sqrt{|\nabla_{x'}\lfcn(x')|^2+1}\, dx'
= \int_{\Gamma_N(\lref)}\omegaone_\lfcn(\check{x})\,\check{\varphi}(\check{x})\, dS(\check{x})\\
& \omegazero_\lfcn(\check{x})= 
\begin{cases}
1& \text{ for } \check{x}\in\Omega_{\text{fix}}\\
\frac{\lref(x')}{\lfcn(x')} 
& \text{ for }\check{x}=(x',\check{z})\in\Omega_{\text{var}}(\lref) 
\end{cases}
\qquad
\omegaone_\lfcn(\check{x})=
\begin{cases}
1& \text{ for } \check{x}\in\Omega_{\text{fix}}\\
\frac{\sqrt{|\nabla_{x'}\lfcn(x')|^2+1}}{\sqrt{|\nabla_{x'}\lref(x')|^2+1}}
& \text{ for }\check{x}=(x',\check{z})\in\Omega_{\text{var}}(\lref) 
\end{cases}.
\end{aligned}
\end{equation}
This conforms to what would be obtained by the general shape calculus approach using the method of mapping (cf., \cite{SokolowskiZolesio} and e.g. \cite{BK-Peichl} in the context of the Westervelt equation), which, for some smooth vector field $\vec{h}:\Omega(\lref)\to\mathbb{R}^d$, applies the transformation $x=\check{x} + \vec{h}(\check{x})$; 
here $\vec{h}(\check{x})=\vec{h}(x',\check{z})=(x',(\tfrac{\lfcn}{\lref }(x')-1)\check{z})$ 
on $\Omega_{\text{var}}(\lfcn)$. 

Setting $\Omref=\Omega(\lref)$, $\Gamref=\Gamma_N(\lref)$, we can therefore write the PDE constraint to which we will subject our optimization by using the operator  $A_{\textup{PDE}}$ defined by 
\begin{equation}\label{eqn:APDE}
\begin{aligned}
&\langle A_{\textup{PDE}}((\check{\gfcn},\hfcn,\lfcn),(\check{\pbar},\check{\ptil},\wtil)),(\check{\qbar},\check{\qtil},\vtil,\check{\mu}_N,\mu_{pl})\rangle\\
&:=\int_0^T\Bigl\{
\int_\Omega \Bigl(
\bigl(\pbar_{tt}  - c^{2}\Delta \pbar - b \Delta \pbar_{t}\bigr)\,\qbar\\
&\qquad\qquad+\bigl(\ptil_{tt}  - c^{2}\Delta \ptil - b \Delta \ptil_{t} -2k((\pbar+\ptil)_t)^2 -2k(\pbar+\ptil)(\pbar+\ptil)_{tt}\bigr)\,\qtil\Bigr)\, dx\\
&\qquad +\tfrac{\rho}{\kappa}\int_{\Gampl}  \Bigl(
\bigl(\rho\wtil_{tt}
\Damppl{+ \beta(-\Delpl)^{\gamma} \wtil_{t}} 
- \hfcn-\kappa(\pbar_{t}+\ptil_{t})\bigr)\vtil
+\delta\Delpl\wtil\, \Delpl\vtil \Bigr) \,dS
\\
&\qquad+ 
\int_{\Gamma_N}
(\Dnu{\pbar}-\gfcn)\,\mu_{N}\, dS
+\int_{\Gampl}
(\Dnu{\ptil} + \rho \wtil_t)\,\mu_{pl}\,dS
\Bigr\}\,dt
\\
=&\int_0^T\Bigl\{
\int_{\Omref} \frac{1}{\omegazero_\lfcn}
\Bigl(\check{\pbar}_{tt}\, \check{\qbar} - (c^2\check{D}^2_\lfcn \check{\pbar} +b\check{D}^2_\lfcn \check{\pbar}_t) \, \check{\qbar} \\
&\qquad\qquad+\bigl(\check{\ptil}_{tt}  - c^{2}\check{D}^2_\lfcn  \check{\ptil} - b \check{D}^2_\lfcn  \check{\ptil}_{t}  
-2k((\check{\pbar}+\check{\ptil})_t)^2 -2k(\check{\pbar}+\check{\ptil})(\check{\pbar}+\check{\ptil})_{tt}\bigr)\,\check{\qtil}\Bigr)\, dx
\\
&\qquad +\tfrac{\rho}{\kappa}\int_{\Gampl}  \Bigl(
\bigl(\rho\wtil_{tt}
\Damppl{+ \beta(-\Delpl)^{\gamma} \wtil_{t}} 
+ \hfcn+\kappa(\check{\pbar}_{t}+\check{\ptil}_{t})\bigr)\vtil
+\delta\Delpl\wtil\, \Delpl\vtil \Bigr) \,dS
\\
&\qquad+ 
\int_{\Gamref}\omegaone_\lfcn 
(\nu_0\cdot M_\lfcn\,\nabla_{\check{x}}\check{\pbar}-\check{\gfcn})\,\check{\mu}_{N}\, dS
+\int_{\Gampl}
(\Dnu{\ptil} + \rho \wtil_t)\,\mu_{pl}\,dS
\Bigr\}\,dt,
\end{aligned}
\end{equation}
cf. \eqref{eqn:var_L2L2}.

Additionally, we assume that the initial data $\check{p}_0,\check{p}_1,\wtil_0,\wtil_1$ has been chosen in such a way that the compatibility conditions 
\begin{equation}\label{compat_cor}
\begin{aligned}
&\Dnu{p_0}+\absbc[p_0,p_1]
=0, \ 
\Dnu{p_2}+\absbc[p_1,p_2]
=0\ \text{ on }\Gamma_a\\
&\partial_\nu p_0 = -\rho\wtil_1, \  \partial_\nu p_1 = -\rho\wtil_2, 
\text{ on }\Gampl\\
&\text{where }p_2 := \tfrac{1}{1-2kp_0}\Bigl(c^2\Delta p_{0}+b\Delta p_1\Bigr), \quad
\tilde{w}_2\:=\frac{1}{\rho}\Bigl(-\delta\Delpl^2\tilde{w}_0
\Damppl{-\beta(-\Delpl)^\gamma\tilde{w}_1}
+\kappa {p_1}+\hfcn(0)\Bigr)
\end{aligned}
\end{equation}
are satisfied,
cf. \cite[Corollary 3.7]{KaTu1}.

{The compatibility conditions on the boundary data can be incorporated into the definition of the affine space $G_{ad}$, see \eqref{defG0}, \eqref{compat}  below.}

\medskip

With this, we consider the minimization problem 
\begin{equation}\label{opt}
\begin{aligned}
&\min_{\vec{g}\in G_{\textup{ad}},\,\vec{u}\in U_{\textup{ad}}}\ J(\vec{g},\vec{u}) \\
&\text{s.t.} \langle A_{\textup{PDE}}(\vec{g},\vec{u}),\vec{z}\rangle =0 \quad \vec{z}\in Z,
\end{aligned}
\end{equation}
with $\vec{g}=(\check{\gfcn},\hfcn,\lfcn)$, $\vec{u}=(\check{\pbar},\check{\ptil},\wtil)$, $\vec{z}=(\check{\qbar},\check{\qtil},\vtil,\check{\mu}_{N},\mu_{pl})\in Z$ and having in mind a tracking plus regularization type objective functional of the form
\begin{equation}\label{J}
\begin{aligned}
&J((\check{\gfcn},\hfcn,\lfcn),(\check{\pbar},\check{\ptil},\wtil))
:=\frac12\int_0^T\Bigl(\int_{\Omega_{\textup{ROI}}}|\check{\pbar}+\check{\pbar}-p_{\textup{d}}|^2 \, dx
+\int_{\Gampl} |\wtil-\wtil_d|^2\, dS(x) \Bigr)\, dt
+\frac{\regpar}{2} \mathcal{R}(\check{\gfcn},\hfcn,\lfcn)
\end{aligned}
\end{equation}
for given target pressure and displacement distributions $p_{\textup{d}}$, $\wtil_d$ on a fixed region of interest $\Omega_{\textup{ROI}}\subseteq\Omega_{\text{fix}}=\Omega\setminus\Omega_{\text{var}(\lfcn)}$, (while $\Omega$ and $\Gamma_N$ vary with $\lfcn$)  and a regularization/control cost parameter $\regpar\geq0$.
Here $G$ is defined as in \eqref{G} below, 
\begin{equation}\label{R}
\begin{aligned}
\mathcal{R}(\check{\gfcn},\hfcn,\lfcn)
=&\Bigl(\int_0^T \int_{\Gamref} \Bigl(|\regop^{\text{time}}_\gfcn (\check{\gfcn}-\check{\gfcn}_0)(\check{x})|^2+|\regop^{\text{space}}_\gfcn (\check{\gfcn}-\check{\gfcn}_0)(\check{x})|^2\, dS(\check{x}) \, dt\\
&+\int_{\Gampl} |\regop_\hfcn (\hfcn-\hfcn_0)(x)|^2\, dS(x)\, dt
+\int_{\Gamflat} |\regop_\lfcn (\lfcn-\lref)(x')|^2\, dS(x')\Bigr),
\end{aligned}
\end{equation}
with a priori guesses $\gfcn_0$, $\hfcn_0$, as well as $\lref$ as in \eqref{ass_l0},
and simple space-time differential operators $\regop^{\text{time}}_\gfcn$, $\regop^{\text{space}}_\gfcn$, $\regop_\hfcn$, $\regop_\lfcn$ so that 
\begin{equation}\label{Rnorm}
\mathcal{R}(\check{\gfcn},\hfcn,\lfcn)
\geq c \Vert (\check{\gfcn}-\check{\gfcn}_0,\hfcn-\hfcn_0,\lfcn-\lref)\Vert_G^2,
\end{equation}
see \eqref{regops} below.
That is, the $\regpar$ term defines a squared norm whose boundedness guarantees enough regularity of $\gfcn$, $\hfcn$, $\lfcn$,  
and in particular allows us to avoid employing inequality constraints for this purpose.

{
Here we abbreviate (cf. \eqref{continuity_ell})
\begin{equation}\label{defG0}
\begin{aligned}
\vec{g}&=(\check{\gfcn},\hfcn,\lfcn)\in G_{\textup{ad}}:=\{(\gfcn_0+t\gfcn_1,\hfcn_0,\lref )\}+G_0, \\ 
G_0&=\{(\uld{\check{\gfcn}},\uld{\hfcn},\uld{\lfcn})\in G\, : \, 
\check{\gfcn}(t=0)
=\check{\gfcn}_t(t=0)
=0, \ \hfcn(t=0)=0, \
\text{tr}_{\partial \Gamflat}\uld{\lfcn}=0, \ \text{tr}_{\partial \Gamflat}\Dnu{\uld{\lfcn}}=0,
\}
\end{aligned}
\end{equation}
where $\check{\gfcn}_0,\,\hfcn_0$ satisfy the compatibility conditions
\begin{equation}\label{compat}
\check{\gfcn}_0(0)=\Dnu{\check{p}_0}, \quad 
\check{\gfcn}_1(0)=\Dnu{\check{p}_1} 
\end{equation}
and use the spaces defined in \eqref{fcnsp} together with the parameter space
\begin{equation}\label{G}
G=W^{1,\infty}(\zeroT H^{s_\gfcn}(\Gamref))\times 
H^1(\zeroT L^2(\Gampl))\times 
H^{s_\lfcn}(\Gamflat).
\end{equation}
}

To achieve \eqref{Rnorm}, we may choose
\begin{equation}\label{regops}
\regop^{\text{time}}_\gfcn= \partial_t^2,\quad
\regop^{\text{space}}_\gfcn= (-\Delta_N+\text{id})^{s_\gfcn},\quad
\regop_\hfcn=\partial_t,\quad 
\regop_\lfcn=(-\Delta_N+\text{id})^{s_\lfcn/2} 
\end{equation}
with the homogeneous Neumann Laplacian $-\Delta_N$ on $\Gamref$ and on $\Gamflat$, respectively.
Indeed, by the one-dimensional version of Agmon's inequality we have, for $g'=(\gfcn-\gfcn_0)_t$
\[\begin{aligned}
\Vert g'(t)\Vert_{H^{s_\gfcn}}^2
&=\int_{\Gamref} ((-\Delta_N+\text{id})^{s_\gfcn/2}g'(t,\check{x}))^2\, dS(\check{x})\\
&=\int_0^t\int_{\Gamref} 2((-\Delta_N+\text{id})^{s_\gfcn/2}g'(\tau,\check{x}))\,((-\Delta_N+\text{id})^{s_\gfcn/2}g'_t(\tau,\check{x}))\, dS(\check{x})\, d\tau\\
&=\int_0^t\int_{\Gamref} 2\,(-\Delta_N+\text{id})^{s_\gfcn}g'(\tau,\check{x})\,
g'_t(\tau,\check{x})\, dS(\check{x})\, d\tau\\
&\leq \int_0^t\int_{\Gamref} ((-\Delta_N+\text{id})^{s_\gfcn}g'(\tau,\check{x}))^2\, dS(\check{x})\, d\tau
+\int_0^t\int_{\Gamref} (g'_t(\tau,\check{x}))^2\, dS(\check{x})\, d\tau,
\end{aligned}\]
where we have used Cauchy-Schwarz' and Young's inequalities in the last step.
Due to $(\gfcn-\gfcn_0)(0)=0$, this already provides equivalence to the full norm.

We will work with three levels of regularity: 
For mere well-definedness of the PDE constraint we assume
\begin{equation}\label{lowreg}
s_\gfcn\geq-\frac12, \quad 
s_\lfcn >\frac{d+1}{2},
\end{equation}
so that $H^{s_\lfcn}(\Gamflat)$ continuously embeds into $W^{1,\infty}(\Gamflat)$ (in fact, even into $C^1(\Gamflat)$);
\\
For existence of a minimizer, requiring uniform boundedness of the trace operator on $\Gamma_N$, which affects regularity of both $\gfcn$ and $\lfcn$, we assume 
\begin{equation}\label{medreg}
s_g\begin{cases}\geq0 \text{ if }d=2\\>0 \text{ if }d=3\end{cases}, \quad s_\ell>\frac{d+1}{2}, 
\end{equation}
so that $\gfcn_t\in L^\infty(0,T;L^{\mathfrak{s}}(\Gamma_N))$, with $\mathfrak{s}\geq 2$, $\mathfrak{s}>d-1$,
$\lfcn\in W^{1,\infty}(\Gamflat)$
cf. Lemma~\ref{lem:estAPDE0} and in particular 
\eqref{g}, \eqref{Stampacchia}, \eqref{Stampacchia2}, \eqref{l} in its proof.
\\
For proving Fr\'{e}chet differentiability of the PDE constraint in the justification of first order necessary optimality conditions, full $H^2(\Omega)$ regularity of $\pbar$ will needed, which we will obtain from membership in $H^2_\Delta(\Omega)$ via elliptic regularity. To this end,
\begin{equation}\label{highreg}
s_\gfcn\geq\frac12, \quad 
s_\lfcn >\frac{d+3}{2},
\end{equation}
(so that $H^{s_\lfcn}(\Gamflat)\hookrightarrow W^{2,\infty}$)
will be required. Indeed, with $\lfcn\in W^{2,\infty}(\Gamflat)$ and $\Gamma_a\cup\Gampl\in C^{1,1}$, the overall boundary is piecewise $W^{2,\infty}$ with $C^1$ transitions according to \eqref{defG0}, thus globally in $W^{2,\infty}=C^{1,1}$.
With the choice \eqref{regops}, the $\regop_\lfcn$ term in \eqref{J}, \eqref{R}  guarantees boundedness of $\lfcn$ in $H^{s_\lfcn}(\Gamflat)$.
Boundedness of $J$ (and in particular its last term with large enough penalty parameter $\regpar$) guarantees smallness of $\lfcn-\lref$ in $H^{s_\lfcn}(\Gamflat)$ and thus, via \eqref{ass_diffl}, nondegeneracy \eqref{bdd_lfcn} of the Neumann boundary part.

\section{Existence of a minimizer}\label{sec:existence_min}

\begin{Theorem}
Let the spaces $G_{ad}$, $U_{ad}=U(\Omref)$ be  as in \eqref{fcnsp}, \eqref{defG0}, \eqref{G}, 
\eqref{medreg}. Then there exists a minimizer $(\vec{g}_*,\vec{u}_*) \in G_{ad} \times U_{ad} $  to the problem defined in \eqref{opt}.
\end{Theorem}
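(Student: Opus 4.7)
The plan is the classical direct method of the calculus of variations, combined with the PDE estimates inherited from \cite{KaTu1} and compactness provided by the regularization term. Let $\{(\vec{g}_n,\vec{u}_n)\}\subseteq G_{\textup{ad}}\times U_{\textup{ad}}$ be a minimizing sequence for \eqref{opt}. Since $J\geq0$, $J(\vec{g}_n,\vec{u}_n)$ is bounded, so \eqref{Rnorm} yields boundedness of $(\check{\gfcn}_n-\check{\gfcn}_0-t\check{\gfcn}_1,\hfcn_n-\hfcn_0,\lfcn_n-\lref)$ in $G$. Combined with \eqref{ass_diffl}, this keeps $\lfcn_n$ uniformly nondegenerate (cf.\ \eqref{bdd_lfcn}), so the transformed PDE system \eqref{eqn:APDE} is uniformly coercive in $\lfcn_n$. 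Applying the energy estimate for the Westervelt--Kirchhoff system from \cite{KaTu1} (pulled back to $\Omref$ via \eqref{transf}--\eqref{transf_contd}) then gives boundedness of $\vec{u}_n=(\check{\pbar}_n,\check{\ptil}_n,\wtil_n)$ in $U(\Omref)$.

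Next I would extract a subsequence (not relabeled) with $\vec{g}_n\rightharpoonup\vec{g}_*$ weakly in $G$ and $\vec{u}_n\rightharpoonup\vec{u}_*$ weakly in $U(\Omref)$. The regularity assumption \eqref{medreg} gives $s_\lfcn>(d+1)/2$, so the compact embedding $H^{s_\lfcn}(\Gamflat)\hookrightarrow\hookrightarrow C^1(\overline{\Gamflat})$ delivers $\lfcn_n\to\lfcn_*$ in $C^1$, and hence $\omegazero_{\lfcn_n}\to\omegazero_{\lfcn_*}$, $\omegaone_{\lfcn_n}\to\omegaone_{\lfcn_*}$, and $M_{\lfcn_n}\to M_{\lfcn_*}$ strongly in $L^\infty$, together with the coefficients of $\check{D}^2_{\lfcn_n}$ in \eqref{transf_contd}. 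For $\vec{u}_n$, the Aubin--Lions lemma applied to $\check{\pbar}_n,\check{\ptil}_n\in H^2(\zeroT L^2(\Omref))\cap H^1(\zeroT H^{s+3/2}(\Omref))$ furnishes strong convergence of $\check{\pbar}_n,\check{\ptil}_n$ in $C([0,T];H^{\sigma}(\Omref))$ and of $(\check{\pbar}_n)_t,(\check{\ptil}_n)_t$ in $L^2(\zeroT H^{\sigma}(\Omref))$ for some $\sigma<s+3/2$, which via Sobolev embedding is strong convergence in $L^q(\zeroT L^r(\Omref))$ spaces sufficient to handle the quadratic nonlinearities; similarly for $\wtil_n$.

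With these convergences, I would pass to the limit in \eqref{eqn:APDE} tested against any fixed $\vec{z}\in Z$. The purely linear terms pass by weak-strong pairings combined with the $L^\infty$-convergence of the coefficients $\omegazero_{\lfcn_n},\omegaone_{\lfcn_n},M_{\lfcn_n}$. For the boundary term $\int_{\Gamref}\omegaone_{\lfcn_n}(\nu_0\!\cdot\! M_{\lfcn_n}\nabla_{\check{x}}\check{\pbar}_n-\check{\gfcn}_n)\check{\mu}_N\,dS\,dt$, the uniform trace bound enabled by \eqref{medreg} suffices. The quadratic terms $-2k((\check{\pbar}_n+\check{\ptil}_n)_t)^2$ and $-2k(\check{\pbar}_n+\check{\ptil}_n)(\check{\pbar}_n+\check{\ptil}_n)_{tt}$ are treated by writing each as a product of a strongly convergent factor (by the Aubin--Lions step) and a weakly convergent one in $L^2(\zeroT L^2(\Omref))$. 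This yields $\langle A_{\textup{PDE}}(\vec{g}_*,\vec{u}_*),\vec{z}\rangle=0$ for every $\vec{z}\in Z$. Admissibility of the limit is preserved because $G_{\textup{ad}}$ is an affine weakly closed subset of $G$ (the boundary/initial trace conditions in \eqref{defG0} and the compatibility \eqref{compat} are closed linear constraints) and $U_{\textup{ad}}=U(\Omref)$ is a Banach space.

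Finally, $J$ is convex (quadratic tracking plus the seminorm \eqref{R}) and continuous, hence weakly sequentially lower semicontinuous on $G\times U(\Omref)$, which gives $J(\vec{g}_*,\vec{u}_*)\leq\liminf_n J(\vec{g}_n,\vec{u}_n)=\inf J$, proving that $(\vec{g}_*,\vec{u}_*)$ is a minimizer. The main technical obstacle I anticipate is the combined nonlinearity in $\vec{u}$ and the dependence of the principal part of $A_{\textup{PDE}}$ on $\lfcn$: one needs just enough compactness on $\lfcn_n$ to convert weak convergence into the $L^\infty$-convergence of the geometric coefficients used above, which is exactly what the choice $s_\lfcn>(d+1)/2$ in \eqref{medreg} secures. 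The rest is bookkeeping in the framework already developed in \cite{KaTu1}.
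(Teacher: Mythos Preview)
Your overall strategy matches the paper's (direct method; boundedness of $\vec{g}_n$ from the regularization; energy estimate to bound $\vec{u}_n$; weak compactness; pass to the limit in $A_{\textup{PDE}}$; weak lower semicontinuity of $J$). Two points deserve correction.

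First, the energy estimate you invoke from \cite{KaTu1} does not suffice here. The point of Lemma~\ref{lem:estAPDE0} is that the constants are \emph{uniform in $n$}, i.e.\ independent of the varying domain $\Omega(\lfcn_n)$ and boundary $\Gamma_N(\lfcn_n)$. The paper states explicitly that ``this estimate does not follow from those already made in \cite{KaTu1}'' and redoes the estimate, tracking the domain dependence of the trace and embedding constants via \eqref{l} under the bound \eqref{bdd_lfcn}. Simply ``pulling back to $\Omref$'' does not automatically give uniform constants; you need to argue this.

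Second, and more substantively, your claim that ``the coefficients of $\check{D}^2_{\lfcn_n}$'' converge strongly in $L^\infty$ from $C^1$-convergence of $\lfcn_n$ is not justified under \eqref{medreg}. Inspecting \eqref{transf_contd}, $\check{D}^2_{\lfcn}$ contains the coefficient $\check{z}\,(\lref/\lfcn)\,\Delta_{x'}(\lfcn/\lref)$ in front of $\check{\varphi}_{\check{z}}$, which involves \emph{second} derivatives of $\lfcn$. With only $s_\lfcn>(d+1)/2$ you get $H^{s_\lfcn}(\Gamflat)\hookrightarrow C^1$, not $C^2$, so this coefficient need not converge in $L^\infty$. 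The paper handles exactly this obstruction in Step~4 by integrating by parts once (moving one gradient onto the smooth test function) so that only $M_{\lfcn_n}$ appears, for which $C^1$-convergence of $\lfcn_n$ is enough; it remarks explicitly: ``we do not move the whole Laplacian over, since this would involve second derivatives of $\lfcn$ in $D^2_{\lfcn_n}$.'' Your version, which keeps $\check{D}^2_{\lfcn_n}$ on the state, either needs the stronger assumption \eqref{highreg} ($s_\lfcn>(d+3)/2$) or the same integration-by-parts trick. Once you adopt that device, the rest of your argument (Aubin--Lions for the quadratic terms, weak--strong pairings, trace estimate for the $\Gamma_N$ term) goes through and is essentially what the paper does.
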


\begin{proof}
We use the direct method of calculus of variation.
\\[1ex]\textbf{Step 1.} 
$G_{\textup{ad}}\times U_{\textup{ad}}\not=\emptyset$.\\
To this end, it suffices to take $\vec{g}=(\gfcn_0,\hfcn_0,\lref)$ and $(\pbar,\ptil,\wtil)$ some sufficiently smooth extension of the initial and boundary data. Note that a point in $G_{\textup{ad}}\times U_{\textup{ad}}$ does not need to satisfy the (PDE) equality constraints.
\\[1ex]\textbf{Step 2.}
$J$ is bounded from below on $G_{\textup{ad}}\times U_{\textup{ad}}\cap \{A_{\textup{PDE}}=0\}$.\\
This is trivially satisfied since we take $J$ to be a linear combination of norms. We then denote the infimum of $J$ on $G_{\textup{ad}}\times U_{\textup{ad}}$ by $J_*$, and consider  a minimizing sequence $\{ (\vec{g}_{n}, \vec{u}_{n})\} \in G_{\textup{ad}}\times U_{\textup{ad}}$
such that $J(\vec{g}_{n}, \vec{u}_{n}) \to J_*$ as $n \to \infty$.
\\[1ex]\textbf{Step 3.}
Compactness of sublevel sets.\\
In order to extract a a convergent subsequence of $\{ (\vec{g}_{n}, \vec{u}_{n})\}$, we show  that sublevel sets of $J$ in $G_{\textup{ad}}\times U_{\textup{ad}}\cap \{A_{\textup{PDE}}=0\}$ are  compact with respect to a suitable topology $\mathcal{T}_G\times \mathcal{T}_U \supset G_{\textup{ad}}\times U_{\textup{ad}} $.
We can achieve this by taking $\mathcal{T}_G$ to be the weak(*) topology with respect to the regularization terms used in $J$ (which for this purpose have been chosen as norms on reflexive spaces or on duals of separable spaces).

Since the regularization/penalty terms in $J$ first of all only imply bounds on $\vec{g}_n$, we will have to make use of energy estimates in order to obtain bounds on $\vec{u}_n$ as well. 
To this end, we introduce the energy
\begin{align}\label{def_energy}    
&\mathcal{E}[\pbar,\ptil,\wttil](t) =
\mathcal{E}_p[\pbar](t)+\mathcal{E}_p[\ptil](t) + \mathcal{E}_w[\wttil](t) \\
&\mathcal{E}_p[\ptil](t) = 
\int_0^t\|\ptil_{tt}(s)\|_{L^2(\Omega)}^2\, ds
+\|\ptil_t(t)\|_{H^{1}(\Omega)}^2
+\|\Delta\ptil(t)\|_{L^2(\Omega)}^2+ b \int_0^t\|\Delta\ptil_t\|_{L^2(\Omega)}^2\, ds\\
&\hspace*{3cm}+\betaabs \int_0^t\|\ptil_{tt}\|_{L^2(\Gamma_a)}^2\, ds
+\frac{\gammaabs}{2} \|\ptil_{t}(t)\|_{L^2(\Gamma_a)}^2\\
&\mathcal{E}_w[\wttil](t)=
\|\wttil_t(t)\|_{L^2(\Gampl)}^2
+\|\Delta_{pl}\wttil(t)\|_{L^2(\Gampl)}^2
\Damppl{+\beta \int_0^t\|(-\Delpl)^{\gamma/2} \wttil_{t}\|_{L^2(\Gampl)}^2\, ds}
\end{align}
\begin{Lemma}\label{lem:estAPDE0}
There exist constants $\tilde{C}(T),\, \bar{C}(T),\, m_0>0$ depending only on $T$, $\Omref$, $\Gamref$ (but not on $\Omega_n$, $\Gamma_{N,n}$) such that for all $\vec{g}_n\in G_{\text{ad}}$ with 
$s_g\geq0$ if $d=2$, $s_g>0$ if $d=3$, $s_\ell > \frac{d-1}{2}+1$ cf. \eqref{lowreg}, \eqref{medreg}, 
such that $\lfcn=\lfcn_n$ satisfies \eqref{ass_diffl}, the PDE constraint $A_{\text{PDE}}(\vec{g}_n,\vec{u}_n)=0$ implies the estimate 
\[
\mathcal{E}[\pbar,\ptil,\wtil_t](t)
\leq \tilde{C}(T)\Bigl(\mathcal{E}[\pbar,\ptil,\wtil_t](0) 
+\|\text{data}\|^2
\Bigr)
\]
(noting that $\wtil_t(0)=w_2$ as in \eqref{compat_cor}) with 
\begin{equation}\label{data}
\|\text{data}\|^2:= \|g\|_{L^2(\zeroT H^{s_g}(\Gamma_N))}^2
+\|g_{t}\|_{H^1(\zeroT {L^2}
(\Gamma_{N,n}))}^2
\Damppl{+\|h_{t}\|_{L^2(\zeroT H^{-\gamma}(\Gampl))}^2}
+\|h_{t}\|_{L^1(\zeroT L^2(\Gampl))},
\end{equation}
provided $\mathcal{E}[\pbar,\ptil,\wtil_t](0) +\|\text{data}\|^2\leq m_0$.
\\
Moreover, for any $n,m\in\mathbb{N}$,  we have the full norm and trace estimates
\begin{equation}\label{est_full_and_trace}
\begin{aligned}
&\|\pbar_{n}\|_{L^2(\zeroT H^{\frac{3}{2}+\epsilon}(\Omega_n))}^2
+\|\text{tr}_{\Gamma_{N,m}}\nabla\pbar_{n}\|_{L^2(\zeroT H^{\epsilon}(\Gamma_{N,m}))}^2
\\&\leq \bar{C}(T)\bigl(\|\Delta\pbar_{n}\|_{L^2(\zeroT L^2(\Omega_n))}^2 + \|g_n\|_{L^2(\zeroT H^{s_g}(\Gamma_N))}^2\bigr)\\
&\|\ptil_{n}\|_{L^2(\zeroT H^{\frac{3}{2}+\epsilon}(\Omega_n))}^2
+\|\text{tr}_{\Gamma_{N,m}}\nabla\ptil_{n}\|_{L^2(\zeroT H^{\epsilon}(\Gamma_{N,m}))}^2
\\&\leq C\bigl(\|\Delta\ptil_{n}\|_{L^2(\zeroT L^2(\Omega_n))}^2 + \|\wtil_{n\,t}\|_{L^2(\zeroT H^{s_w}(\Gampl))}^2\bigr)
\\
&\text{with }0\leq\epsilon\leq\min\{s_g,s_w,\frac12\}, \quad 
s_w\geq0
\end{aligned}
\end{equation}
\end{Lemma}
\textit{Proof.} For the energy estimate we refer to the appendix and note that this estimate does not follow from those already made in \cite{KaTu1}. 
\\
For the full norm and trace estimates we refer to, e.g., 
\cite[Theorem 4]{Savare1998} 
combined with interpolation in case $\epsilon>0$
and \cite[Theorem 1]{Ding_1996}. 
\\[1ex]\textbf{Step 4.}
Lower semicontinuity of $J$ and closedness of overall admissible set (including the PDE constraints) with respect to  $\mathcal{T}_G\times\mathcal{T}_U$.\\
 Since $J$ is convex and lower semicontinuous on $G_{\textup{ad}}\times U_{\textup{ad}}$, it is $\mathcal{T}_G\times\mathcal{T}_U$ lower semicontinuous.
In fact, with the weak(*) topology induced by $J$, lower semicontinuity is always satisfied due to the Theorems by Banach-Alaoglu or Eberlein-Smulian with the one by Katukani, under the above conditions. 
The difficulty lies in proving that the weak limit solves the nonlinear PDE. By choosing sufficiently strong regularization norms in $J$, we lift $\mathcal{T}_G$ to a strength that enables this.
 
 More precisely, we show the following.
\[\begin{aligned}
&\Bigl((\vec{g}_n,\vec{u}_n)\stackrel{\mathcal{T}_G\times\mathcal{T}_U}{\longrightarrow} (\vec{g}_*,\vec{u}_*) \text{ and }A_{\textup{PDE}}(\vec{g}_n,\vec{u}_n)=0\ \forall n \Bigr)\\ 
&\Rightarrow \ \Bigl(J(\vec{g}_*,\vec{u}_*)\leq\lim_{n\to\infty}J(\vec{g}_n,\vec{u}_n)\text{ and }A_{\textup{PDE}}(\vec{g}_*,\vec{u}_*)=0\Bigr)
\end{aligned}
\]

Due to $A_{\textup{PDE}}(\vec{g}_n,\vec{u}_n)=0$, we have
\begin{equation}\label{differenceAPDE}
A_{\textup{PDE}}(\vec{g}_*,\vec{u}_*)=
A_{\textup{PDE}}(\vec{g}_*,\vec{u}_*)-A_{\textup{PDE}}(\vec{g}_n,\vec{u}_n).
\end{equation}
The limits below are taken along subsequences whose convergence follows from compactness as well as weak compactness due to boundedness and reflexivity of the spaces, while suppressing subsequence indices notationally. Due to uniqueness of (weak) limits, these in fact need to coincide with the respective components of $\vec{g}_*,\vec{u}_*$.
Besides weak convergence according to the uniform bounds from Lemma~\ref{lem:estAPDE0}, we will also use compact embeddings to conclude the following strong convergence (along subsequences)
\begin{equation}\label{limits_muMomega}
\begin{aligned}
&\omegazero_{\ell_n}\to\omegazero_{\ell_*}, \quad M_{\ell_n}\to M_{\ell_*}, \quad \omegaone_{\ell_n}\to\omegaone_{\ell_*}
\quad \text{ in }C(\Gamflat)\text{ for }s_\ell> \frac{d}{2}+1 \text{ cf. \eqref{lowreg}}\\
&\check{\pbar}_n+\check{\ptil}_n\to \check{\pbar}_*+\check{\ptil}_*
\quad \text{ in }L^2(\zeroT L^2(\Omref))
\end{aligned}
\end{equation} 

We use the fact that $A_{\textup{PDE}}$ splits into three parts: (a) an interior part, (b) a plate part, (c) a Neumann boundary part. 
\[
\begin{aligned}
&\langle A_{\textup{PDE}}(\vec{g}_*,\vec{u}_*)-A_{\textup{PDE}}(\vec{g}_n,\vec{u}_n),(\check{\qbar},\check{\qtil},\vtil,\check{\mu}_N,\mu_{pl})\rangle\\
&=
\langle A_{\textup{PDE}}(\vec{g}_*,\vec{u}_*)-A_{\textup{PDE}}(\vec{g}_n,\vec{u}_n),
(\check{\qbar},\check{\qtil},0,0,0)\rangle\\
&\quad+\langle A_{\textup{PDE}}(\vec{g}_*,\vec{u}_*)-A_{\textup{PDE}}(\vec{g}_n,\vec{u}_n),
(0,0,\vtil,0,\mu_{pl})\rangle\\
&\quad+\langle A_{\textup{PDE}}(\vec{g}_*,\vec{u}_*)-A_{\textup{PDE}}(\vec{g}_n,\vec{u}_n),
(0,0,0,\check{\mu}_N,0)\rangle.
\end{aligned}
\]
PDE nonlinearity only affects the interior part; domain variation takes effect only in the interior and the Neumann boundary part.
\\
We start with the interior part by considering $(\check{\qbar},\check{\qtil})\in C_0^\infty((0,T)\times\Omega)^2$ arbitrary fixed and use integration by parts to move most of the derivatives to the smooth test functions (we do not move the whole Laplacian over, since this would involve second derivatives of $\lfcn$ in $D^2_{\lfcn_n}$)
\[ 
\begin{aligned}
&\langle A_{\textup{PDE}}((\check{\gfcn}_n,\hfcn_n,\lfcn_n),(\check{\pbar}_n,\check{\ptil}_n,\wtil_n)),(\check{\qbar},\check{\qtil},0,0,0)\rangle
\\
&=\int_0^T\Bigl\{
\int_{\Omega(\lfcn_n)} \Bigl(
\pbar_n\,\qbar^{\lfcn_n}_{tt}  + \nabla \pbar_n\cdot\nabla(c^2\qbar^{\lfcn_n} - b\qbar^{\lfcn_n}_t)
\\
&\qquad\qquad+
\ptil_n\,\qtil^{\lfcn_n}_{tt}  + \nabla \ptil_n\cdot\nabla(c^2\qtil^{\lfcn_n} - b\qtil^{\lfcn_n}_t)
-k(\pbar_n+\ptil_n)^2\,\qtil^{\lfcn_n}_{tt}\Bigr)\, dx
\Bigr\}\,dt
\\
&=\int_0^T\Bigl\{
\int_{\Omref} \frac{1}{\omegazero_{\lfcn_n}}
\Bigl(
\check{\pbar}_n\,\check{\qbar}_{tt}  + M_{\lfcn_n}\check{\nabla} \check{\pbar}_n\cdot (M_{\lfcn_n}\check{\nabla}(c^2\check{\qbar} - b \check{\qbar}_t))
\\
&\qquad\qquad\qquad+
\check{\ptil}_n\,\check{\qtil}_{tt}  + M_{\lfcn_n}\check{\nabla} \check{\ptil}_n\cdot (M_{\lfcn_n}\check{\nabla}(c^2\check{\qtil} - b \check{\qtil}_t))
-k(\check{\pbar}_n+\check{\ptil}_n)^2\,\check{\qtil}_{tt}\Bigr)
\, dx \Bigr\}\,dt
\end{aligned}
\]
(where the superscript ${}^{\ell_n}$ indicates that the transformations of the test functions to the domain $\Omega_{\lfcn_n}$ depend on $\lfcn_n$) 
and likewise for $\langle A_{\textup{PDE}}((\check{\gfcn}_*,\hfcn_*,\lfcn_*),(\check{\pbar}_*,\check{\ptil}_*,\wtil_*)),(\check{\qbar},\check{\qtil},\vtil,0,0)\rangle$.
Therewith 
\begin{equation}\label{differenceAPDEinterior}
\begin{aligned}
&\langle A_{\textup{PDE}}(\vec{g}_*,\vec{u}_*)-A_{\textup{PDE}}(\vec{g}_n,\vec{u}_n),
(\check{\qbar},\check{\qtil},0,0,0)\rangle\\
&=\int_0^T\Bigl\{
\int_{\Omref} \Bigl(\frac{1}{\omegazero_{\lfcn_*}}-\frac{1}{\omegazero_{\lfcn_n}}\Bigr)
\Bigl(
\check{\pbar}_n\,\check{\qbar}_{tt}  + M_{\lfcn_n}\check{\nabla} \check{\pbar}_n\cdot (M_{\lfcn_n}\check{\nabla}(c^2\check{\qbar} - b \check{\qbar}_t))
\\
&\qquad\qquad\qquad\qquad\quad+
\check{\ptil}_n\,\check{\qtil}_{tt}  + M_{\lfcn_n}\check{\nabla} \check{\ptil}_n\cdot (M_{\lfcn_n}\check{\nabla}(c^2\check{\qtil} - b \check{\qtil}_t))
-k(\check{\pbar}_n+\check{\ptil}_n)^2\,\check{\qtil}_{tt}\Bigr)
\\
&\qquad\qquad+
\frac{1}{\omegazero_{\lfcn_*}}
\Bigl(
(M_{\lfcn_*}-M_{\lfcn_n})\check{\nabla} \check{\pbar}_n\cdot (M_{\lfcn_n}\check{\nabla}(c^2\check{\qbar}-b\check{\qbar}_t)
+M_{\lfcn_*}\check{\nabla} \check{\pbar}_n\cdot ((M_{\lfcn_*}-M_{\lfcn_n})\check{\nabla}(c^2\check{\qbar}-b\check{\qbar}_t)
\\
&\qquad\qquad\qquad\qquad+
(\check{\pbar}_*-\check{\pbar}_n)\,\check{\qbar}_{tt}  
+ (M_{\lfcn_*}\check{\nabla} (\check{\pbar}_*-\check{\pbar}_n))\cdot (M_{\lfcn_*}\check{\nabla}(\check{\qbar} - b \check{\qbar}_t))
\\
&\qquad\qquad\qquad\qquad
-k(\check{\pbar}_*+\check{\ptil}_*+\check{\pbar}_n+\check{\ptil}_n)((\check{\pbar}_*-\check{\pbar}_n)+(\check{\ptil}_*-\check{\ptil}_n))\,\check{\qtil}_{tt}\Bigr)\, dx
\Bigr\}\,dt\\
&\to 0 \text{ as }n\to\infty, 
\end{aligned}
\end{equation}
where we have used the uniform bounds from Lemma~\ref{lem:estAPDE0},  as well as the limits \eqref{limits_muMomega}.

For the plate part with arbitrary $(\vtil,\mu_{pl})\in C_0^\infty((0,T)\times\Gampl)^2$, again moving most of the derivatives to the smooth test functions and using linearity, we have 
\[ 
\begin{aligned}
&\langle A_{\textup{PDE}}(\vec{g}_*,\vec{u}_*)-A_{\textup{PDE}}(\vec{g}_n,\vec{u}_n),
(0,0,\vtil,0,\mu_{pl})\rangle
\\
&=\int_0^T\Bigl\{
\tfrac{\rho}{\kappa}\int_{\Gampl}  \Bigl(
(\wtil_*-\wtil_n)\,(\rho\vtil_{tt}
\Damppl{- \beta(-\Delpl)^{\gamma} \vtil_{t})} 
- \kappa(\pbar_*-\pbar_n+\ptil_*-\ptil_n)\vtil_t
+\delta\Delpl(\wtil_*-\wtil_n)\, \Delpl\vtil \Bigr) \,dS\\
&\qquad
+\int_{\Gampl}
\bigl(\Dnu{(\ptil_*-\ptil_n)}\mu_{pl} - \rho (\wtil_*-\wtil_n)\,\mu_{pl\,t}\bigr)\,dS
\Bigr\}\,dt\\
&\to 0 \text{ as }n\to\infty 
\end{aligned}
\]
by using boundedness and hence weak convergence according to Lemma~\ref{lem:estAPDE0}, in particular also the trace estimate \eqref{est_full_and_trace} with 
$\epsilon=0$,
that implies weak convergence of $\Dnu{(\ptil_*-\ptil_n)}$.

Finally, for the Neumann boundary part, testing with arbitrary $\check{\mu}_{N}\in C_0^\infty((0,T)\times\Gamref)$
\[ 
\begin{aligned}
&\langle A_{\textup{PDE}}(\vec{g}_*,\vec{u}_*)-A_{\textup{PDE}}(\vec{g}_n,\vec{u}_n),
(0,0,0,\check{\mu}_{N},0)\rangle
\\
&=\int_0^T
\int_{\Gamref}\Bigl(
(\omegaone_{\lfcn_*}-\omegaone_{\lfcn_n}) (\nu_0\cdot M_{\lfcn_n}\,\nabla_{\check{x}}\check{\pbar}_n-\check{\gfcn}_n)\\
&\qquad\qquad\qquad
+\omegaone_{\lfcn_*} \Bigl((\nu_0\cdot (M_{\lfcn_*}-M_{\lfcn_n})\,\nabla_{\check{x}}\check{\pbar}_n
+\nu_0\cdot M_{\lfcn_*}\,\nabla_{\check{x}} (\check{\pbar}_*-\check{\pbar}_n)
-(\check{\gfcn}_*-\check{\gfcn}_n)
\Bigr)\check{\mu}_{N}\, dS
\\
&\to 0 \text{ as }n\to\infty 
\end{aligned}
\]
again using of the trace estimate \eqref{est_full_and_trace} with \eqref{medreg}, as well as \eqref{limits_muMomega}.
%

Since $(\check{\qbar},\check{\qtil},\check{\mu}_{N},\vtil,\mu_{pl})\in C_0^\infty((0,T)\times\Omega)^2\times C_0^\infty((0,T)\times\Gampl)\times C_0^\infty((0,T)\times\Gamref)\times C_0^\infty((0,T)\times\Gampl)$ is arbitrary here, due to \eqref{differenceAPDE} we have shown $A_{\textup{PDE}}(\vec{g}_*,\vec{u}_*)=0$.
\end{proof}

\section{First Order Optimality Conditions}

The first order necessary optimality conditions for a minimizer 
$(\gfcn^*,\hfcn^*,\lfcn^*),(\pbar^*,\ptil^*,\wtil^*)$ can formally be obtained by setting all partial derivatives of the Lagrange function
\[
\begin{aligned}
&\mathcal{L} \Bigl(\check{\gfcn},\hfcn,\lfcn),(\check{\pbar},\check{\ptil},\wtil),(\check{\qbar},\check{\qtil},\vtil, \check{\mu}_{N}, \mu_{pl} )\Bigr)\\
&=J((\check{\gfcn},\hfcn,\lfcn),(\check{\pbar},\check{\ptil},\wtil))
+\langle A_{\textup{PDE}}((\check{\gfcn},\hfcn,\lfcn),(\check{\pbar},\check{\ptil},\wtil)),(\qbar,\qtil,\vtil,\check{\mu}_{N}, \mu_{pl} )\rangle
\end{aligned}
\]
to zero.\\
Differentiation with respect to $(\qbar,\qtil,\vtil, \mu_{N}, \mu_{pl} )$ gives the weak form of the state equation \eqref{eqn:var_L2L2}.\\
Differentiation with respect to $\pbar$ 
yields the weak form of the adjoint equation for $\qbar$:
\begin{equation}\label{adj_qbar}
\begin{aligned}
&\qbar\in 
L^{2}(\zeroT L^2({\Omega(\lfcn)})),
\text{ and }\\
&0=\int_0^T\Bigl\{\int_{\Omega(\lfcn)}\chi_{\textup{ROI}}(\pbar+\ptil-p_{\textup{d}})\phibar \, dx\\
&\quad
+\int_{\Omega(\lfcn)} \Bigl( ( \phibar_{tt} 
- c^2\Delta \phibar - b\Delta \phibar_t)\, \qbar\Bigr)\, dx
\\
&\qquad
+ \int_{\Omega(\lfcn)}  (- 2k\phibar \ptil_{tt} -4 k (\pbar+\ptil)_{t} \phibar_{t} -2k\phibar \pbar_{tt} -2k(\pbar+\ptil)\phibar_{tt}) \qtil) \, dx
-\rho\int_{\Gampl}  \phibar_{t}\, \vtil \,dS
\Bigr\}dt
\\
&\qquad
+\int_{\Gamma_N}
\Dnu{\phibar} \,\mu_{N}\, dS
\\
&\qquad
\text{for all }\phibar\in 
W^{2,\infty}(\zeroT L^2({\Omega(\lfcn)}))\cap 
H^1(\zeroT H^2_{\Delta,1}({\Omega(\lfcn)}))\\
&\qquad\phantom{\text{for all }\phibar\in}\text{s.t. }
\phibar(0)=0, \phibar_t(0)=0
\end{aligned}
\end{equation}
%
Differentiation with respect to $(\ptil,\wtil)$ yields the weak form of the adjoint equation for $(\qtil,\vtil, \mu_N, \mu_{pl})$: 
\begin{equation}\label{adj_qtilvtil}
\begin{aligned}
&(\qtil,\vtil, \mu_{N}, \mu_{pl} )\in 
L^{2}(\zeroT L^2({\Omega(\lfcn)}))\times L^{2}(\zeroT H^{2}_{\diamondsuit}(\Gampl))
\times L^{2}(\zeroT L^{2}(\Gamma_{N})) \times L^{2}(\zeroT L^{2}(\Gampl)) 
\\
&0=\int_0^T\Bigl\{\int_{\Omega(\lfcn)}\chi_{\textup{ROI}}(\pbar+\ptil-p_{\textup{d}})\phitil \, dx\\
&\quad+\int_{\Omega(\lfcn)} 
 \Bigl(
 (1-2k(\pbar+\ptil)) \phitil_{tt} 
  -2k  \phitil \ptil_{tt} 
 -  c^2\Delta \phitil
 - b\Delta \phitil_t
-4k(\pbar+\ptil)_t \phitil_t
-2k\phitil \pbar_{tt})
\Bigr)
\qtil
\, dx
\\
&\qquad
 +\tfrac{\rho}{\kappa}\int_{\Gampl} 
 	 \Bigl( (\rho\psitil_{tt} 
     \Damppl{+ \beta(-\Delpl)^{\gamma} \psitil_{t} }
	 		- \kappa \phitil_{t} ) \vtil
 +\delta\Delpl\psitil\, \Delpl\vtil  \Bigr) \,dS
\\
&\qquad
+\int_{\Gampl}
(\Dnu{\phitil} + \rho \psitil_t)\,\mu_{pl}\,dS
\Bigr\}\, dt
\\
&\text{for all }(\phitil,\psitil)\in 
W^{2,\infty}(\zeroT L^2({\Omega(\lfcn)}))\cap H^1(\zeroT H_{\Delta,0}^2({\Omega(\lfcn)})) \cap L^2(\zeroT H_{\Delta,1}^2({\Omega(\lfcn)}))
\\
&\phantom{\text{for all }(\phitil,\psitil)\in}
 \times (W^{1,\infty}(\zeroT L^2(\Gampl))\cap L^\infty(\zeroT H^2_\diamondsuit(\Gampl))),\\
&\phantom{\text{for all }(\phitil,\psitil)\in}
\text{s.t. }
 \phitil(0)=0, \ \psitil(0)=0, \ 
  \phitil_t(0)=0, \ \psitil_t(0)=0.
\end{aligned}
\end{equation}
%
Note that \eqref{adj_qbar} and \eqref{adj_qtilvtil} are coupled by the terms 
$-2k(\pbar+\ptil)\phibar_{tt}) \qtil$, $-\rho\int_{\Gampl}  \phibar_{t}\, \vtil$.

\medskip

Existence of variational solutions 
$(\qbar,\qtil,\vtil)\in Z$ to \eqref{adj_qbar}, \eqref{adj_qtilvtil} follows from theory for first order optimality conditions, see Theorem~\ref{thm:optcond} below.
Differentiating with respect to $\check{\gfcn}$ and $\hfcn$, 
we end up with two of the three gradient equations
\begin{equation} \label{gradonetwo}
\begin{aligned}
&0=\int_{\Gamref}
\Bigl(\int_0^T\uld{\gfcn}\bigl(\regpar 
((\regop^{\text{time}}_\gfcn)^* \regop^{\text{time}}_\gfcn+(\regop^{\text{space}}_\gfcn)^* \regop^{\text{space}}_\gfcn) (\check{\gfcn}-\check{\gfcn}_0) -
\mu_{N} \bigr)\,dt 
\Bigr) dS
\text{ for all }\uld{\gfcn} \in W^{1,\infty}(\zeroT H^{s_\gfcn}(\Gamref))\\
&0=\int_{\Gampl}\int_0^T\uld{\hfcn}\Bigl(\regpar \regop_\hfcn^* \regop_\hfcn (\hfcn-\hfcn_0) - \tfrac{\rho}{\kappa}\vtil\Bigr)\,dt \, dS
\text{ for all }\uld{\hfcn} \in L^2(\zeroT L^2(\Gampl))
\end{aligned}
\end{equation}
The third gradient equation is obtained by differentiation with respect to $\lfcn$,
using the identities
\[
\begin{aligned}
&\frac{d}{d\lfcn}\left[\int_{\Omega(\lfcn)} \varphi(x)\, dx\right]\uld{\lfcn}
=\frac{d}{d\lfcn}\left[\int_{\Gamflat} \int_0^{\lfcn(x')} \varphi(x',z)\, dx'\, dz\right]\uld{\lfcn}
=\int_{\Gamflat} \varphi(x',\lfcn(x'))\, \uld{\lfcn}\, dx'
=\int_{\Gamma_N(\lfcn)} \varphi \, \widehat{\left(\frac{\uld{\lfcn}}{\sigma_{\lfcn}}\right)}\, dS
\end{aligned}
\]
(with $\widehat{f}(x',z)=f(x')$ for some $f$ depending only on $x'$)
and, applying integration by parts with $\uld{\lfcn}=0$ on $\partial\Gamflat$, as well as 
\[
\begin{aligned}
&\frac{d}{d\lfcn}\left[\int_{\Gamma_N(\lfcn)} \varphi\, dS\right]\uld{\lfcn}
=\frac{d}{d\lfcn}\left[\int_{\Gamflat} \varphi(x',\lfcn(x'))\sqrt{|\nabla_{x'}\lfcn(x')|^2+1}\, dx'\right]\uld{\lfcn}\\
&=\int_{\Gamflat} \Bigl(
\varphi_z(x',\lfcn(x'))\uld{\lfcn}\sqrt{|\nabla_{x'}\lfcn(x')|^2+1}
+\varphi(x',\lfcn(x'))\tfrac{1}{\sqrt{|\nabla_{x'}\lfcn(x')|^2+1}} \nabla_{x'}\lfcn(x')\cdot \nabla_{x'}\uld{\lfcn}(x')
\Bigr)\, dx'\\
&=\int_{\Gamflat} \Bigl(
\varphi_z(x',\lfcn(x'))\sqrt{|\nabla_{x'}\lfcn(x')|^2+1}
- \frac{d}{d x'}
\Bigl[\varphi(x',\lfcn(x'))\tfrac{1}{\sqrt{|\nabla_{x'}\lfcn(x')|^2+1}} \nabla_{x'}\lfcn(x')\Bigr]\Bigr)\,\uld{\lfcn}(x')
\, dx'\\
&=\int_{\Gamflat} \Bigl(
\bigl(-\nabla_{x'}\varphi(x',\lfcn(x'))+\varphi_z(x',\lfcn(x'))\bigr)\tfrac{1}{\sqrt{|\nabla_{x'}\lfcn(x')|^2+1}}
-\varphi(x',\lfcn(x'))
\frac{d}{d x'}
\Bigl[\tfrac{1}{\sqrt{|\nabla_{x'}\lfcn(x')|^2+1}} \nabla_{x'}\lfcn(x')\Bigr]\Bigr)\,\uld{\lfcn}(x')
\, dx'\\
&=\int_{\Gamma_N(\lfcn)}\bigl(\partial_{\nu_{\lfcn}} \varphi + \varphi\, \widehat{H}_\lfcn\bigr)\, \widehat{\left(\frac{\uld{\lfcn}}{\sigma_{\lfcn}}\right)}\, dS 
\quad \text{ for all }\uld{\lfcn} \in H^{s_\lfcn}(\Gamflat))
\end{aligned}
\]
with 
\[
\nu_\lfcn(x')=\frac{1}{\sigma_\lfcn(x')}\left(\begin{array}{c}-\nabla_{x'}\lfcn(x')\\1\end{array}\right), \qquad 
\sigma_\lfcn(x')=\sqrt{|\nabla_{x'}\lfcn(x')|^2+1}, \qquad
H_\lfcn(x')=-\sigma_{\lfcn(x')} 
\frac{d}{d x'}
\Bigl[\tfrac{1}{\sigma_{\lfcn(x')}} \nabla_{x'}\lfcn(x')\Bigr]
\]
(where $H_\lfcn$ is the mean curvature and the total derivative $\frac{d}{d x'}$ is to be understood as a divergence) as
\begin{equation}\label{gradthree}
\begin{aligned}
0=&
\int_{\Gamma_N(\lfcn)} 
\Bigl\{
\int_0^T \Bigl(
\bigl(\pbar_{tt}  - c^{2}\Delta \pbar - b \Delta \pbar_{t}\bigr)\,\qbar\\
&\qquad\qquad+\bigl((1-2k(\pbar+\ptil))\, \ptil_{tt}  - c^{2}\Delta \ptil - b \Delta \ptil_{t}  
-2k((\pbar+\ptil)_t)^2 -2k(\pbar+\ptil)\pbar_{tt}\bigr)\,\qtil\Bigr)\, dt\, \\
&\qquad\qquad
+\regpar \widehat{\regop_\lfcn^*\regop_\lfcn\lfcn}
+ \bigl(\partial_{\nu_{\lfcn}} \varphi + \varphi\, \widehat{H}_\lfcn\bigr)\, 
\Bigr\}
\widehat{\left(\frac{\uld{\lfcn}}{\sigma_{\lfcn}}\right)}\, dS
\quad \text{ for all }\uld{\lfcn} \in H^{s_\lfcn}(\Gamflat))
\\
&\text{with }\varphi=
\int_0^T
(\Dnu{\pbar}-\gfcn)\,\mu_{N} \,dt
\end{aligned}
\end{equation}
(see also \cite[Theorem 8]{BK-Peichl}).

Strong forms of these first order optimality conditions are listed in the appendix.

\medskip

For a full justification of the first order necessary optimality conditions, we invoke \cite[Theorem 6.3]{Troeltzsch2010}. To this end, we need to show 
\begin{enumerate}
\item Fr\'{e}chet differentiability of $J$ and $A_{\textup{PDE}}$; For $J$, this is obvious; for $A_{\textup{PDE}}$, we will prove
\[
\|A_{\textup{PDE}}(\vec{g}^*+\uld{ \vec{g}},\vec{u}^*+\uld{\vec{u}})-A_{\textup{PDE}}(\vec{g}^*,\vec{u}^*)-A_{\textup{PDE}}'(\vec{g}^*,\vec{u}^*)(\uld{ \vec{g}}, \uld{\vec{u}})\|_{Z^*} 
=o(\|(\uld{ \vec{g}}, \uld{\vec{u}})\|_{G\times U}),
\]
in Subsection~\ref{subsec:diff}.
\item Well-definedness and a certain surjectivity of the linearized equality constraint operator\\ $A_{\textup{PDE}}'((\gfcn^*,\hfcn^*,\lfcn),(\pbar^*,\ptil^*,\wtil^*))$
(more precisely, the Zowe - Kurcyusz condition \cite[Eq. (6.15)]{Troeltzsch2010}, that here reads as 
$A_{\textup{PDE}}'(\vec{g}^*,\vec{u}^*)(\mathbb{R}^+(G_{\textup{ad}}-\{\vec{g}^*\})\times \mathbb{R}^+(U_{\textup{ad}}-\{\vec{u}^*\}))=Z^*$).\\
We avoid inequality constraints by assuming the admissible set
$G_{\textup{ad}}\times U_{\textup{ad}}$ to be an affinely linear space 
$\{(\gfcn_0,\hfcn_0,\lref )\}+G_0\times U_0$ and 
putting all bounds needed for the analysis into the penalty term $\tfrac{\regpar}{2}\|(\gfcn-\gfcn_0,\hfcn-\hfcn_0,\lfcn-\lref)\|_{G}^2$; 
then the 
Zowe - Kurcyusz condition simplifies to 
$A_{\textup{PDE}}'(\vec{g}^*,\vec{u}^*)(G\times U)=Z^*$,
that is, 
\begin{equation}\label{ZoweKurcyusz}
\text{For any }\vec{f}\in Z^* \text{ there exists }
(\uld{\vec{g}}, \uld{\vec{u}})\in {G\times U} \text{ such that }
A_{\textup{PDE}}'(\vec{g}^*,\vec{u}^*)(\uld{ \vec{g}}, \uld{\vec{u}}) =\vec{f},
\end{equation}
which will be verified in Subsection~\ref{subsec:surj}.
\end{enumerate}

\begin{Theorem}\label{thm:optcond}
Let the spaces $G_{ad}$, $U_{ad}=U(\Omref)$ be  as in \eqref{fcnsp}, \eqref{defG0}, \eqref{G}, \eqref{highreg} and let \eqref{ass_l0} be satisfied. Then for any local minimizer $(\vec{g}_*,\vec{u}_*) \in G_{ad} \times U_{ad} $  to \eqref{opt}, there exists an adjoint state $\vec{z}\in Z$, whose components satisfy \eqref{adj_qbar}, \eqref{adj_qtilvtil}, \eqref{gradonetwo}, \eqref{gradthree}.
\end{Theorem}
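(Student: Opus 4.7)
The plan is to invoke the Lagrange multiplier theorem \cite[Theorem 6.3]{Troeltzsch2010}. Since all pointwise bounds have been absorbed into the regularization $\tfrac{\regpar}{2}\mathcal{R}$, the admissible set $G_{\textup{ad}}\times U_{\textup{ad}}$ is an affine space and the Zowe--Kurcyusz regularity condition collapses to plain surjectivity of the linearized constraint operator, i.e.\ \eqref{ZoweKurcyusz}. Given Fr\'{e}chet differentiability of $J$ and $A_{\textup{PDE}}$ together with this surjectivity, the theorem furnishes an adjoint state $\vec{z}\in Z$ such that partial derivatives of the Lagrangian $\mathcal{L}$ vanish; setting $\partial_{\vec{u}}\mathcal{L}=0$ produces the adjoint equations \eqref{adj_qbar}, \eqref{adj_qtilvtil} and $\partial_{\vec{g}}\mathcal{L}=0$ produces the gradient equations \eqref{gradonetwo}, \eqref{gradthree}. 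The work thus reduces to verifying the two hypotheses.

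Differentiability of $J$ is routine since it is a quadratic form plus regularization norms, the only subtlety being the $\lfcn$-dependence of the tracking integration domain, which passes through the smooth Jacobian $\omegazero_\lfcn$. For $A_{\textup{PDE}}$, the nonlinearities are of two kinds: the Westervelt-type products $k(\pbar+\ptil)^2$ and $k((\pbar+\ptil)_t)^2$, whose G\^{a}teaux derivatives in $\vec{u}$ are continuous from $U$ into $L^2(\zeroT L^2(\Omega))$ thanks to the embedding $H^2_{\Delta,1}(\Omega)\hookrightarrow L^\infty(\Omega)$; and the geometric dependence of the coefficients $\omegazero_\lfcn$, $M_\lfcn$, $\check{D}^2_\lfcn$, $\omegaone_\lfcn$ on $\lfcn$. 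The latter involves up to two derivatives of $\lfcn$ (e.g.\ $\Delta_{x'}(\lfcn/\lref)$ inside $\check{D}^2_\lfcn$), so a quantitative Taylor expansion with remainder measured in $\|\cdot\|_{Z^*}$ requires the embedding $H^{s_\lfcn}\hookrightarrow W^{2,\infty}(\Gamflat)$ supplied by the high-regularity hypothesis \eqref{highreg}, together with elliptic regularity upgrading $\pbar_*$ from $H^2_{\Delta,1}(\Omega)$ to $H^2(\Omega)$ on the now globally $C^{1,1}$ boundary.

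For the surjectivity \eqref{ZoweKurcyusz}, given arbitrary $\vec{f}\in Z^*$ the strategy is to set $\uld{\vec{g}}=\vec{0}$ and construct $\uld{\vec{u}}$ as the solution of the linearized state equation $A_{\textup{PDE}}'(\vec{g}_*,\vec{u}_*)(\vec{0},\uld{\vec{u}})=\vec{f}$. This is a coupled linear system on $\Omref\cup\Gampl$ with principal part a damped wave equation of non-degenerate coefficient $1-2k(\pbar_*+\ptil_*)$ (positivity guaranteed by the smallness of $\vec{u}_*$ inherited through $\mathcal{E}$ from the $J$-bound), together with a Kirchhoff plate, a transmission condition on $\Gampl$, and a Neumann condition on $\Gamref$; the five components of $\vec{f}$ enter as forcing data. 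Well-posedness follows from a Galerkin-plus-energy-estimate argument entirely analogous to Lemma~\ref{lem:estAPDE0} and to the construction in \cite{KaTu1}, with the inhomogeneous Neumann component of $\vec{f}$ handled by a standard lifting for the $\uld{\pbar}$-equation.

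The hard part will be the combined shape-plus-nonlinearity Fr\'{e}chet differentiability of $A_{\textup{PDE}}$. While continuity of $\lfcn\mapsto(\omegazero_\lfcn,M_\lfcn,\omegaone_\lfcn)$ was enough in Section~\ref{sec:existence_min}, producing a derivative with a quantitative remainder bound in $\|\cdot\|_{Z^*}$ forces a joint estimate in which increments $\uld{\lfcn}$ and their derivatives (up to second order when differentiating through $\check{D}^2_\lfcn$) are paired against derivatives of $\check{\pbar}_*,\check{\ptil}_*$ that are only available in $L^2$-in-time energy norms. Closing these pairings hinges on two ingredients: the embedding $H^{s_\lfcn}\hookrightarrow W^{2,\infty}(\Gamflat)$, which lifts $\uld{\lfcn}$ into an $L^\infty$-multiplier space so that the optimal state need only be in $L^2$; and global $H^2$-regularity of $\pbar_*$ via elliptic regularity on the piecewise-$W^{2,\infty}$ boundary, which is exactly what the hypotheses \eqref{continuity_ell} and \eqref{highreg} were designed to provide. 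Once these two tools are in place, the remainder estimate closes by a standard Taylor expansion and the Lagrange multiplier rule is applicable.
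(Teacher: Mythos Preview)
Your overall framework—invoking \cite[Theorem 6.3]{Troeltzsch2010} via the Zowe--Kurcyusz condition and reducing to Fr\'{e}chet differentiability plus surjectivity of $A_{\textup{PDE}}'(\vec{g}^*,\vec{u}^*)$—matches the paper exactly, and your account of the differentiability step correctly identifies the key ingredients (the embedding $H^{s_\lfcn}\hookrightarrow W^{2,\infty}(\Gamflat)$ from \eqref{highreg} and full $H^2(\Omega)$ elliptic regularity), in line with Proposition~\ref{prop:Frechet}.

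The surjectivity argument, however, has a genuine gap. You propose to set $\uld{\vec{g}}=\vec{0}$ and solve the full linearized coupled system for $\uld{\vec{u}}$ with all five components of $\vec{f}\in Z^*$ as data. But then $f_N\in L^2(\zeroT H^s(\Gamma_N))$ becomes the Neumann condition $\Dnu\uld{\pbar}=f_N$, whereas membership of $\uld{\pbar}$ in $U$ requires $\uld{\pbar}\in H^1(\zeroT H^2_{\Delta,1}(\Omega))$ and hence $\Dnu\uld{\pbar}\in H^1(\zeroT H^s(\partial\Omega))$: no ``standard lifting'' into $U$ exists for generic $f_N$ with merely $L^2$-in-time regularity. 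The coupled energy estimates (cf.\ the proof of Lemma~\ref{lem:estAPDE0}) likewise need $\gfcn_t\in H^1(\zeroT L^2(\Gamma_N))$ and $\hfcn_t\in L^1(\zeroT L^2(\Gampl))$, far beyond what $f_N$ and $f_{\wtil}\in L^2(\zeroT H^2_\diamondsuit(\Gampl)^*)$ provide. The paper (Proposition~\ref{prop:surj}) instead spends the control directions to absorb these rough components: it sets $\uld{\lfcn}=0$, $\uld{\gfcn}=-f_N$ (so $\Dnu\uld{\pbar}=0$ on $\Gamma_N$) and chooses $\uld{\hfcn}$ so that one may take $\uld{\wtil}=0$, reducing to a linear system in $(\uld{\pbar},\uld{\ptil})$ alone with only $f_{\pbar},f_{\ptil},f_{pl}$ as data, handled by Lemma~\ref{lem:enest_surj}. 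The paper flags this explicitly (``we have heavily made use of $\hfcn$ being one of the design variables''); your choice $\uld{\vec{g}}=0$ discards precisely the freedom needed to close the argument.
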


\subsection{Fr\'{e}chet differentiability}\label{subsec:diff}
Here we are assuming higher regularity \eqref{highreg} of $\gfcn$,  $\lfcn$ and of the boundary overall (see the comment after $\eqref{highreg}$, so that in the definition of $U$, cf. \eqref{fcnsp} with $s=\frac12$, we have full $H^2(\Omega)$ spaces, that is, we may skip the subscripts $\Delta$.

\begin{Proposition}\label{prop:Frechet}
Let the spaces $G_{ad}$, $U_{ad}=U(\Omref)$ be  as in \eqref{fcnsp}, \eqref{defG0}, \eqref{G}, \eqref{highreg}.
Under assumption \eqref{ass_l0}, the map $A_{\textup{PDE}}$ defined in \eqref{eqn:APDE}  is Fr\'{e}chet  differentiable as a mapping $G_{ad} \times U(\Omref)  \to Z(\Omref)^*$ (cf. \eqref{fcnsp}, \eqref{G}) at any $(\vec{g},\vec{u})\in G_{ad} \times U(\Omref)$ such that $\lfcn$ satisfies \eqref{ass_diffl} (cf. \eqref{bdd_lfcn}).
\end{Proposition}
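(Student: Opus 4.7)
\emph{Proof sketch.}
The plan is to exploit the polynomial-in-state and smooth-in-$\lfcn$ structure of $A_{\textup{PDE}}$ in \eqref{eqn:APDE} and verify the Fr\'{e}chet derivative by a direct Taylor expansion. I would first regroup the integrand of $A_{\textup{PDE}}(\vec{g},\vec{u})$ according to its dependence on the variables: one group collects terms that are linear in $(\check\pbar,\check\ptil,\wtil,\check\gfcn,\hfcn)$, with $\lfcn$-dependence entering only through the coefficients $K(\lfcn)\in\{1/\omegazero_\lfcn,\,M_\lfcn,\,\omegaone_\lfcn\}$; a second group collects the Westervelt nonlinearities $-2k((\check\pbar+\check\ptil)_t)^2\,\check\qtil$ and $-2k(\check\pbar+\check\ptil)(\check\pbar+\check\ptil)_{tt}\,\check\qtil$ (weighted by $1/\omegazero_\lfcn$), which are quadratic in $(\pbar,\ptil)$; no other nonlinearities occur. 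The candidate derivative at a base point $(\vec{g},\vec{u})$ is then the formal application of the product rule, with $\uld\lfcn$-contributions entering as $K'(\lfcn)\uld\lfcn$ in the interior and Neumann integrands, and $\uld{\vec{u}}$-contributions from the product rule applied to the Westervelt quadratic terms.

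A preliminary step is to verify that the maps $\lfcn\mapsto 1/\omegazero_\lfcn,\,M_\lfcn,\,\omegaone_\lfcn$ are $C^\infty$ Fr\'{e}chet differentiable from a neighborhood of $\lref$ in $H^{s_\lfcn}(\Gamflat)$ into $W^{1,\infty}(\Omref)$, respectively $L^\infty(\Gamref)$. Since $s_\lfcn>(d+3)/2$ gives $H^{s_\lfcn}(\Gamflat)\hookrightarrow W^{2,\infty}(\Gamflat)$, and these coefficients are rational functions of $\lfcn$ and $\nabla_{x'}\lfcn$ whose denominators (essentially $\lfcn$ and $\sqrt{|\nabla_{x'}\lref|^2+1}$) are uniformly bounded away from zero by \eqref{ass_l0}, \eqref{ass_diffl}, \eqref{bdd_lfcn}, this reduces to elementary chain-rule computations on $L^\infty$- and $W^{1,\infty}$-valued functions of an $L^\infty$-variable, with explicit pointwise derivative formulas.

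It then remains to bound the remainder
\[
R:=A_{\textup{PDE}}(\vec{g}+\uld{\vec{g}},\vec{u}+\uld{\vec{u}}) - A_{\textup{PDE}}(\vec{g},\vec{u}) - A_{\textup{PDE}}'(\vec{g},\vec{u})(\uld{\vec{g}},\uld{\vec{u}})
\]
in the $Z^*$-norm. It splits into three kinds of contributions: purely $\uld{\vec{u}}$-quadratic terms of the shape $((\uld\pbar+\uld\ptil)_t)^2$ and $(\uld\pbar+\uld\ptil)(\uld\pbar+\uld\ptil)_{tt}$, estimated in $L^2(0,T;L^2(\Omref))$ using that $U\hookrightarrow H^2(0,T;L^2(\Omref))\cap H^1(0,T;H^2(\Omref))$ (under \eqref{highreg}, full $H^2$ in space is available) together with $H^2(\Omega)\hookrightarrow L^\infty(\Omega)$ in $d\le 3$, yielding $O(\|\uld{\vec{u}}\|_U^2)$ bounds; purely $\uld\lfcn$-quadratic Taylor tails $K(\lfcn+\uld\lfcn)-K(\lfcn)-K'(\lfcn)\uld\lfcn=O(\|\uld\lfcn\|_{W^{2,\infty}}^2)$ multiplied by bounded states, again $L^2(L^2)$-bounded; and cross-terms bilinear in $(\uld\lfcn,\uld{\vec{u}})$ or $(\uld\lfcn,\uld\gfcn)$, trivially $o(\|(\uld{\vec{g}},\uld{\vec{u}})\|_{G\times U})$.

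The main obstacle will be the Neumann boundary term $\int_{\Gamref}\omegaone_\lfcn(\nu_0\cdot M_\lfcn\nabla_{\check x}\check\pbar-\check\gfcn)\,\check\mu_N\,dS$, because the test component $\check\mu_N\in L^2(0,T;H^{-s}(\Gamref))$ with $s=1/2$ requires control of the trace of $\nabla\pbar$ in $L^2(0,T;H^{1/2}(\Gamref))$, together with preservation of this regularity under multiplication by $\omegaone_\lfcn,\,M_\lfcn$ and their $\uld\lfcn$-perturbations. For this I would invoke \eqref{highreg}, which promotes $U$ to $H^1(0,T;H^2(\Omega))$ in space (so the standard trace theorem places $\nabla\pbar|_{\Gamref}$ in $L^2(0,T;H^{1/2}(\Gamref))$), and use that $s_\lfcn>(d+3)/2$ makes $\omegaone_\lfcn,M_\lfcn\in W^{1,\infty}(\Gamref)$, hence admissible $H^{1/2}(\Gamref)$-multipliers, on the piecewise $W^{2,\infty}=C^{1,1}$ boundary guaranteed in the comment after \eqref{highreg}.
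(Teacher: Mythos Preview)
Your approach is essentially the paper's: both proceed by direct Taylor expansion of $A_{\textup{PDE}}(\vec{g}+\uld{\vec{g}},\vec{u}+\uld{\vec{u}})-A_{\textup{PDE}}(\vec{g},\vec{u})$ and term-by-term estimation of the remainder. The paper carries this out explicitly, writing down remainder terms $I_1,\ldots,I_5$ (with variants $I_2',\tilde I_2,\tilde I_2'$) and bounding each in $L^2(0,T;L^2(\Omref))$ or $L^2(0,T;L^2(\Gamref))$, while you organize the same computation more abstractly via differentiability of the coefficient maps $\lfcn\mapsto K(\lfcn)$.

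One small inaccuracy in your sketch: your list $K(\lfcn)\in\{1/\omegazero_\lfcn,M_\lfcn,\omegaone_\lfcn\}$ is incomplete. The operator $\check D^2_\lfcn$ in \eqref{transf_contd} is not a composition of $M_\lfcn$ with derivatives; it carries scalar coefficients depending on $\nabla_{x'}(\lfcn/\lref)$ \emph{and} $\Delta_{x'}(\lfcn/\lref)$. So you also need differentiability of $\lfcn\mapsto\Delta_{x'}(\lfcn/\lref)$ into $L^\infty(\Omref)$, which is precisely where the $W^{2,\infty}$ embedding from \eqref{highreg} is used (and which you already invoke). This does not break your argument, but the coefficient list must include these second-order terms, and the target space should be $L^\infty(\Omref)$ rather than $W^{1,\infty}(\Omref)$.

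Conversely, you are more careful than the paper on the Neumann boundary component: the paper estimates its remainder $I_5$ only in $L^2(0,T;L^2(\Gamref))$, whereas the $Z^*$-norm (cf.\ \eqref{fcnsp}) actually demands $L^2(0,T;H^s(\Gamref))$ with $s\in(0,1/2]$. Your remark that \eqref{highreg} furnishes the $H^{1/2}$ trace of $\nabla\check\pbar$ and that $\omegaone_\lfcn,M_\lfcn\in W^{1,\infty}(\Gamref)$ act as $H^{1/2}$-multipliers is exactly what is needed to close this.
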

\begin{proof}
Since $\wtil$, $\check{g}$, $h$ enter the definition of $A_{\textup{PDE}}$ linearly, we only have to estimate the Taylor remainders arising due to nonlinearity in $\pbar$, $\ptil$, $\lfcn$.

We first estimate the contributions over the reference domain $\Omref= \Omega_{\text{fix}}\cup \Omega_{\text{var}} (\lref)$, that is, each of the following terms $I_1$ -- $I_4$, in $L^{2}(\zeroT L^2(\Omref))$:
\begin{align*}
I_{1} = &  \frac{1}{ \omegazero_{\lfcn+ \uld{\lfcn} }} ( \check{\pbar}_{tt} + \uld{\check{\pbar}}_{tt} ) - \frac{1}{ \omegazero_{\lfcn }}  \check{\pbar}_{tt} - \frac{1}{ \omegazero_{\lfcn }}  \uld{\check{\pbar}}_{tt}  - \frac{ \uld{\lfcn}}{\lref} \check{\pbar}_{tt} \\
I_{2}  = & \frac{1}{ \omegazero_{\lfcn+ \uld{\lfcn} }} D^{2}_{\lfcn+ \uld{\lfcn}} ( \check{\pbar}+ \uld{ \check{\pbar}} ) 
			- \frac{1}{ \omegazero_{\lfcn }}  D^{2}_{\lfcn}  \check{\pbar}
			- \frac{1}{ \omegazero_{\lfcn }}  D^{2}_{\lfcn} \uld{ \check{\pbar}} 
				- \frac{1}{\omegazero_{\uld{\lfcn}}}  \Delta_{x'} \check{\pbar} 
				+ 2 \nabla_{x'}(\tfrac{\uld{\lfcn}}{\lref }) \check{z} \cdot \ \nabla_{x'}    \partial_{ \check{z}} \check{\pbar} 
			\\
			& -    2 (\tfrac{\lref }{\lfcn}) \check{z}^2 \nabla_{x'}(\tfrac{\uld{\lfcn}}{\lref } )\cdot \nabla_{x'}(\tfrac{\lfcn}{\lref }) \partial^{2}_{ \check{z}}  \check{\pbar}	
			+	 (\tfrac{\lref   \uld{\lfcn}}{\lfcn^{2}}) \, \check{z}^2 |\nabla_{x'}(\tfrac{\lfcn}{\lref })|^2 \partial^{2}_{ \check{z}}  \check{\pbar}
\\
		&
 		+  \check{z} \Delta_{x'}(\tfrac{\uld{\lfcn}}{\lref })  \partial_{\check{z}} \check{\pbar}			
				+ \frac{\lref \uld{\lfcn} }{\lfcn^{2}}   \partial_{ \check{z}}^{2}  \check{\pbar} 
\\						
I'_{2} = &  \frac{d}{dt} I_{2}
\\
\tilde{I}_{2} =& \text{ as $I_2$ with  $\ptil$ in place of $\pbar$}
\\						
\tilde{I}'_{2} = &  \frac{d}{dt} \tilde{I}_{2}
\\
I_{3}  = &     \frac{1}{ \omegazero_{\lfcn+ \uld{\lfcn} }} 
		(1- 2k (\check{\pbar} + \uld{ \check{\pbar}}+ \check{\ptil}+ \uld{\check{\ptil}} ))
		( \check{\pbar}_{tt} + \uld{ \check{\pbar}}_{tt}+\check{\ptil}_{tt} + \uld{\check{\ptil}}_{tt} ) 
		- \frac{1}{ \omegazero_{\lfcn }} 
		(1- 2k ( \check{\pbar} + \check{\ptil}))
		(\check{\pbar}_{tt}+\check{\ptil}_{tt})   
		\\
		& \indeq - \frac{\uld{\lfcn}}{\lref} (1- 2k (\check{\pbar} + \check{\ptil})) (\check{\pbar}_{tt}+\check{\ptil}_{tt})
		+ \frac{1}{ \omegazero_{\lfcn}}  
			 2k ( \uld{\check{\pbar}}+ \uld{\check{\ptil}} )(\check{\pbar}_{tt}+\check{\ptil}_{tt})
		- \frac{1}{ \omegazero_{\lfcn}}  
			 (1- 2k (\check{\pbar} + \check{\ptil}))
		 (\uld{\check{\pbar}}_{tt}+\uld{\check{\ptil}}_{tt})	\\
I_{4} = & \frac{1}{ \omegazero_{\lfcn+ \uld{\lfcn} }} ( \check{\ptil}_{t} +\uld{\check{\ptil}_{t}}+\check{\pbar}_{t}+\uld{\check{\pbar}_{t}})^{2}
 -\frac{1}{ \omegazero_{\lfcn}} ( \check{\ptil}_{t} +\check{\pbar}_{t})^{2}
 - \frac{\uld{\lfcn}}{\lref}  ( \check{\ptil}_{t} +\check{\pbar}_{t})^{2}
 -\frac{1}{ \omegazero_{\lfcn}} 2( \check{\ptil}_{t} +\check{\pbar}_{t})(\uld{\check{\ptil}_{t}}+\uld{\check{\pbar}_{t}})	
\end{align*}

The terms  $I_{1}$, $I_{2}$, $I'_2$, $\tilde{I}_{2}$, $\tilde{I}'_2$ restricted on  $\Omega_{\text{fix}}$ are linear and thus vanish, so one only needs to compute the norm on  $\Omega_{\text{var}}(\lref)$, where 
$\omegazero_{\lfcn }=\frac{\lref}{\lfcn}$ and thus 
$\frac{1}{ \omegazero_{\lfcn+ \uld{\lfcn} }}-\frac{1}{ \omegazero_{\lfcn }}=-\frac{\uld{\lfcn}}{\lref}$. We then estimate

\begin{align*}
\Vert I_{1} \Vert_{L^{2}(\zeroT L^2(\Omref))}&= \left\Vert \frac{1}{ \omegazero_{\lfcn+ \uld{\lfcn} }} ( \check{\pbar}_{tt} + \uld{\check{\pbar}}_{tt} ) - \frac{1}{ \omegazero_{\lfcn }}  \check{\pbar}_{tt} - \frac{1}{ \omegazero_{\lfcn }}  \uld{\check{\pbar}}_{tt}  - \frac{ \uld{\lfcn}}{\lref} \check{\pbar}_{tt} \right\Vert_{L^{2}(L^2)} 
= \left\Vert \frac{ \uld{\lfcn}}{\lref}  \uld{\check{\pbar}}_{tt}  \right\Vert_{L^{2}(L^2)} \\
&\leq  C(\Vert \lref \Vert_{L^{\infty}}) \Vert  \uld{\lfcn} \Vert_{L^{\infty}(B)} \Vert \uld{\check{\pbar}}_{tt} \Vert_{L^{2}(L^2)} 
\leq C(\Vert \lref \Vert_{L^{\infty}}) \Vert  \uld{\lfcn} \Vert_{L^{\infty}(B)} \Vert \uld{\vec{u}} \Vert_{U} 
= o(\|(\uld{ \vec{g}}, \uld{\vec{u}})\|_{G\times U}),
\end{align*}
where we denote by $C(\Vert \lref \Vert_{L^{\infty}})$ a constant depending on $\Vert \lref \Vert_{L^{\infty}}$.
Similarly, simplifying the expression for $I_{2}$, we have
\begin{align*}
I_{2} & = \frac{ \uld{\lfcn}}{\lref} \Delta_{x'}     \uld{\check{\pbar}}
		-  2 \nabla_{x'}(\tfrac{\uld{\lfcn}}{\lref }) \check{z} \nabla_{x'} \partial_{\check{z}}   \uld{\check{\pbar}}
		- \Delta_{x'}(\tfrac{\uld{\lfcn}}{\lref })\check{z} \partial_{\check{z}}    \uld{\check{\pbar}}
				- \left( \frac{\lref}{ \lfcn + \uld{\lfcn}}  \left\vert \nabla \left( \frac{\lfcn +\uld{\lfcn} }{\lref}\right)\right\vert^{2} 
				- \frac{\lref}{\lfcn}  \left \vert \nabla \left( \frac{\lfcn } {\lref}\right) \right\vert^{2} 
		\right) \check{z}^{2} \partial^{2}_{\check{z}} (\check{\pbar} +\uld{\check{\pbar}}) \\
		&  \indeq -    2 (\tfrac{\lref }{\lfcn}) \check{z}^2 \nabla_{x'}(\tfrac{\uld{\lfcn}}{\lref } )\cdot \nabla_{x'}(\tfrac{\lfcn}{\lref }) \partial^{2}_{ \check{z}}  \check{\pbar}	
			+	 (\tfrac{\lref   \uld{\lfcn}}{\lfcn^{2}}) \, \check{z}^2 |\nabla_{x'}(\tfrac{\lfcn}{\lref })|^2 \partial^{2}_{ \check{z}}  \check{\pbar}
	-\frac{\lref \uld{\lfcn}}{(\lfcn+ \uld{\lfcn}) \lfcn}  \partial^{2}_{\check{z}}  \uld{\check{\pbar}}
	+\frac{\lref( \uld{\lfcn})^2}{(\lfcn+ \uld{\lfcn}) \lfcn^2}  \partial^{2}_{\check{z}} \check{\pbar}
\\
 & = \frac{ \uld{\lfcn}}{\lref} \Delta_{x'}     \uld{\check{\pbar}}
		-  2 \nabla_{x'}(\tfrac{\uld{\lfcn}}{\lref }) \check{z} \nabla_{x'} \partial_{\check{z}}   \uld{\check{\pbar}}
		- \Delta_{x'}(\tfrac{\uld{\lfcn}}{\lref })\check{z} \partial_{\check{z}}    \uld{\check{\pbar}}
			- \frac{\lref \uld{\lfcn}}{(\lfcn+ \uld{\lfcn}) \lfcn} \left\vert \nabla \left( \frac{\lfcn}{\lref}\right)\right\vert^{2}  \check{z}^{2} \partial^{2}_{\check{z}} \uld{\check{\pbar}}
		+ \frac{\lref( \uld{\lfcn})^2}{(\lfcn+ \uld{\lfcn}) \lfcn^2} \left\vert \nabla \left( \frac{\lfcn}{\lref}\right)\right\vert^{2}  \check{z}^{2} \partial^{2}_{\check{z}} \check{\pbar}
		\\
		& \indeq 
		+2  \frac{\lref \uld{\lfcn}}{(\lfcn+ \uld{\lfcn}) \lfcn}  \nabla \left( \frac{\lfcn}{\lref}\right) \cdot   \nabla \left( \frac{\uld{\lfcn}}{\lref} \right) \check{z}^{2} \partial^{2}_{\check{z}} \check{\pbar}
		+ \frac{\lref }{(\lfcn+ \uld{\lfcn})}  \left\vert \nabla \left( \frac{\uld{\lfcn}}{\lref}\right)\right\vert^{2}  \check{z}^{2} \partial^{2}_{\check{z}} ( \check{\pbar} +\uld{\check{\pbar}} )
\\
& 
	\indeq 
	+  2\frac{\lref }{(\lfcn+ \uld{\lfcn})}	 \nabla \left( \frac{\lfcn}{\lref}\right)\cdot \nabla \left( \frac{\uld{\lfcn}}{\lref}\right)\	 \check{z}^{2} \partial^{2}_{\check{z}} \uld{\check{\pbar}}
	-\frac{\lref \uld{\lfcn}}{(\lfcn+ \uld{\lfcn}) \lfcn}  \partial^{2}_{\check{z}}  \uld{\check{\pbar}}
	+\frac{\lref( \uld{\lfcn})^2}{(\lfcn+ \uld{\lfcn}) \lfcn^2 }  \partial^{2}_{\check{z}} \check{\pbar}
%
\end{align*}
Hence, assuming that the term $\uld{\lfcn}$ is sufficiently small in $L^{\infty}$, that is, using 
\eqref{ass_diffl}, we have the estimate 
\begin{align*}
\Vert I_{2} \Vert_{L^{2}}
& \leq C( \Vert \lfcn \Vert_{W^{1,\infty}(B)} , \Vert \lref \Vert_{W^{1,\infty}(B)}, \Vert \tfrac{1}{\lref} \Vert_{L^{\infty}(B)} , \Vert \check{\pbar}  \Vert_{L^2(H^{2})})  (\Vert \uld{\lfcn} \Vert^{2}_{W^{1,\infty}(B)} +  \Vert \uld{\lfcn} \Vert^{4}_{W^{1,\infty}(B)}+  \Vert \uld{\check{\pbar}} \Vert^{2}_{L^2(H^{2})} )\\
& =o(\|(\uld{ \vec{g}}, \uld{\vec{u}})\|_{G\times U}).
\end{align*}
A similar estimate holds for $I'_{2}$  involving a $D_{\lfcn}^{2}\check{\pbar}_{t}$ term, 
and analogously for $\tilde{I}_{2}$, $\tilde{I}'_{2}$.
\\
As for $I_{3}$ and $I_{4}$, which involve $\check{\pbar}$ and $\check{\ptil}$ terms, we first rewrite
\begin{align*}
I_{3} = &   
	 -\frac{\uld{\lfcn}}{\lref} 2k (\uld{\check{\pbar}}+\uld{\check{\ptil}}) (\check{\pbar}_{tt}+\check{\ptil}_{tt})
			+ \frac{\uld{\lfcn}}{\lref} (1-2k(\check{\pbar}+ \check{\ptil}))
            (\uld{\check{\pbar}}_{tt}+\uld{\check{\ptil}}_{tt})
			+\frac{\lfcn+\uld{\lfcn}}{\lref} (1- 2k (\uld{\check{\pbar}}+\uld{\check{\ptil}}))
            (\uld{\check{\pbar}}_{tt}+\uld{\check{\ptil}}_{tt}),
	\end{align*}
for which the space time $L^{2}$ norm can be similarly estimated using the continuous embedding of $H^{2}$ into $L^{\infty}$ to obtain 
\begin{align*}
\Vert I_{3} \Vert_{L^{2}} & \leq C(\Vert \check{\pbar}_{tt} \Vert_{L^{2}},\Vert \check{\ptil}_{tt} \Vert_{L^{2}}, 
				\Vert \check{\ptil} \Vert_{L^2(H^{2})},
				\Vert \check{\pbar} \Vert_{L^2(H^{2})}
				,\Vert \lfcn \Vert_{L^{\infty}(B)}, \Vert \tfrac{1}{\lref} \Vert_{L^{\infty}(B)} ) \\
  &\qquad \cdot P_{3}(\Vert \uld{\lfcn}	\Vert_{W^{1,\infty}(B)},\Vert \uld{\check{\pbar}}	\Vert_{H^{2}}, \Vert \uld{\check{\ptil}}\Vert_{L^2(H^{2})} ,\Vert \uld{\check{\pbar}_{tt}} \Vert_{L^2(L^{2})},\Vert \uld{\check{\ptil}_{tt}} \Vert_{L^2(L^{2})}),
\end{align*}
where $P_{3}$ indicates polynomial terms of order 2 and 3 in the indicated norms, while $C$ is a function of the indicted terms.
Similarly, 
\begin{align*}
 & I_{4}= \frac{\uld{\lfcn}}{\lref} (\uld{\check{\ptil}_{t}}+\uld{\check{\pbar}_{t}})^{2}
 	+ \frac{\uld{\lfcn}}{\lref} 2( \check{\ptil}_{t} +\check{\pbar}_{t})(\uld{\check{\ptil}_{t}}+\uld{\check{\pbar}_{t}})
	+ \omegazero_{\lfcn}(\uld{\check{\ptil}_{t}}+\uld{\check{\pbar}_{t}})^{2}
\end{align*}
which using the continuous embedding of $H^{1}$ into $L^{4}$ again can be estimated by
\begin{align*}
\Vert I_{4} \Vert_{L^{2}} & \leq C( 
				\Vert \check{\ptil}_{t} \Vert_{L^2(H^{1})},
				\Vert \check{\pbar}_{t} \Vert_{L^2(H^{1})}
				,\Vert \lfcn \Vert_{L^{\infty}(B)} ,  \Vert \tfrac{1}{\lref} \Vert_{L^{\infty}(B)}) 
  \, P_{3}(\Vert \uld{\lfcn}	\Vert_{W^{1,\infty}(B)},\Vert \uld{\check{\pbar}_t}	\Vert_{L^2(H^{1})}, \Vert \uld{\check{\ptil}}_{t}\Vert_{L^2(H^{1})})
  \\ &
  =o(\|(\uld{ \vec{g}}, \uld{\vec{u}})\|_{G\times U}).
\end{align*}
The terms in $\wtil$ on $\Gampl$ are linear, so it remains to estimate the boundary terms  on $\Gamma_{0N}$. In particular, we estimate the $L^{2}$ norm of 
\begin{align*}
I_{5} = 
& 
\omegaone_{\lfcn+ \uld{\lfcn}}\,  [\nu_0\cdot M_{\lfcn+\uld{\lfcn}} \,\nabla (\check{\pbar}+\uld{\check{\pbar}}) -(\check{\gfcn}+ \uld{\check{\gfcn}} )] 
- \omegaone_{\lfcn} \, [\nu_0\cdot M_{\lfcn} \,\nabla \check{\pbar}- \check{\gfcn}  ] 
\\
&
\indeq + \frac{1}{\sqrt{|\nabla \lfcn|^{2}+1}} \omegaone_{\lfcn}   \nabla \lfcn \cdot \nabla \uld{\lfcn}
\, [\nu_0\cdot M_{\lfcn} \,\nabla \check{\pbar}- \check{\gfcn}  ] 
- \omegaone_{\lfcn}\,  [\nu_0\cdot M_{\lfcn} \,\nabla \uld{\check{\pbar}} - \uld{\check{\gfcn}} ] 
- \omegaone_{\lfcn} \,  [\nu_0\cdot  M'_{\lfcn}  \,\nabla \check{\pbar}]
\\
& = A+B +C +D+ E
\end{align*}
where
\begin{align*}
M'_{\lfcn} = \left(\begin{array}{cc}
  0 & \left(\frac{\nabla_{x'} \uld{\lfcn} }{\lfcn} -\frac{\nabla_{x'} \lfcn  }{\lfcn^{2}} \uld{\lfcn}\right)    \check{z} \\
  0& - \frac{\lref}{\lfcn^{2}} \uld{\lfcn} \end{array}\right).
 \end{align*}
We estimate five principal parts:
\begin{align*}
A= 
& 
( \omegaone_{\lfcn+ \uld{\lfcn}} - \omegaone_{\lfcn} + \frac{1}{ |\nabla \lfcn|^{2}+1 } \omegaone_{\lfcn}   \nabla \lfcn \cdot \nabla \uld{\lfcn} ) \,  [\nu_0\cdot M_{\lfcn} \,\nabla \check{\pbar} -\check{\gfcn} )] \\
B = &  \omegaone_{\lfcn+ \uld{\lfcn}}  [\nu_0\cdot ( M_{\lfcn+\uld{\lfcn}}-M_{\lfcn} -  M'_{\lfcn}) \,\nabla \check{\pbar} \\
C = &  ( \omegaone_{\lfcn+ \uld{\lfcn}} - \omegaone_{\lfcn})  [\nu_{0}\cdot  M'_{\lfcn} \,\nabla \check{\pbar}] \\
D = & ( \omegaone_{\lfcn+ \uld{\lfcn}} - \omegaone_{\lfcn} ) [\nu_0\cdot M_{\lfcn} \,\nabla \uld{\check{\pbar}} - \uld{\check{\gfcn}} ] \\
F = &  \omegaone_{\lfcn+ \uld{\lfcn}}  [\nu_0\cdot ( M_{\lfcn+\uld{\lfcn}}-M_{\lfcn}) \,\nabla \uld{\check{\pbar}}]
\end{align*}
To estimate the $L^{2}$ norm, we note that
\begin{align*}
( \omegaone_{\lfcn+ \uld{\lfcn}} - \omegaone_{\lfcn} + \frac{1}{ |\nabla \lfcn|^{2}+1 } \omegaone_{\lfcn}   \nabla \lfcn \cdot \nabla \uld{\lfcn} ) = \frac{  \nabla \uld{\lfcn} \cdot \nabla \lfcn ( 2 |\nabla \uld{\lfcn}| | \nabla \lfcn| +|\nabla \uld{\lfcn}|^{2} )
+ |\nabla \uld{\lfcn}|^{2} \sqrt{ 1+ |\nabla \lfcn|^{2}} } 
{ \sqrt{ 1+ |\nabla \uld{\lref}|^{2}}  \sqrt{ 1+ |\nabla \lfcn|^{2}}
(\sqrt{ 1+ |\nabla \lfcn|^{2}}
+ \sqrt{ 1+ |\nabla \lfcn + \nabla \uld{\lfcn} |^{2}})^{2}},
\end{align*}
and hence 
\begin{align*}
\left\Vert \omegaone_{\lfcn+ \uld{\lfcn}} - \omegaone_{\lfcn} + \frac{1}{ |\nabla \lfcn|^{2}+1 } \omegaone_{\lfcn}   \nabla \lfcn \cdot \nabla \uld{\lfcn} 
\right\Vert_{L^{\infty}(B)} 
\leq C( \Vert\lfcn \Vert_{W^{1,\infty}(B)}) \sum_{j=2}^{3}\Vert  \uld{\lfcn} \Vert_{W^{1,\infty}(B)}^{j} \\
= C( \Vert\lfcn \Vert_{W^{1,\infty}(B)}) o(\Vert  \uld{\vec{g}} \Vert_{G})
\end{align*}
Hence, 
\begin{align*}
\Vert A \Vert_{L^2(\zeroT L^{2}(\Gamma_N))} 
		\leq  C( \Vert \lfcn \Vert_{W^{1,\infty}(B)}, \Vert \check{\pbar} \Vert_{L^2(H^{2})}, \Vert g \Vert_{L^\infty(L^{2}(\Gamma_{0N}))})\,  o(\Vert  \uld{\vec{g}} \Vert_{G}).
\end{align*}
For $B$, we use 
\begin{align*}
M_{\lfcn + \uld{\lfcn}} - M_{\lfcn} - M'_{\lfcn} =
  \left(\begin{array}{cc}
  0 & \left( \frac{- \nabla_{x'} \uld{\lfcn} \, \lfcn \, \uld{\lfcn} +\nabla_{x'} \lfcn \uld{\lfcn}^{2} }{(\lfcn+\uld{\lfcn}) \lfcn^{2}} \right)    \check{z} \\
  0&  \frac{\lref (\uld{\lfcn})^{2}}{(\lfcn+\uld{\lfcn}) \lfcn^{2}} \end{array}\right);
\end{align*}
estimating the $L^{\infty}$ norm we get
\begin{align*}
\Vert M_{\lfcn + \uld{\lfcn}} - M_{\lfcn} - M'_{\lfcn} \Vert_{L^{\infty}(B)}
\leq C( \Vert \lref \Vert_{L^{\infty}(B)}, 1/ \Vert \lfcn \Vert_{L^{\infty}(B)})   \Vert \uld{\lfcn} \Vert_{W^{1,\infty}(B)}^{2}  
\end{align*}
and hence, 
\begin{align*}
\Vert B \Vert_{L^2(\zeroT L^{2}(\Gamma_N))} 
\leq C( \Vert \lfcn \Vert_{W^{1,\infty}(B)}, \Vert \check{\pbar} \Vert_{L^2(H^{2})} )\, o(\Vert  \uld{\vec{g}} \Vert_{G})
\end{align*}
The terms  $C$, $D$ and $F$ can be similarly estimated by
 $o(\Vert  \uld{\vec{g}} \Vert_{G})$ with constants depending on the norm of $\Vert \vec{g} \Vert_{G}$ and $\Vert \vec{u} \Vert_{U}$.
\end{proof}

\subsection{Surjectivity}\label{subsec:surj}
\begin{Proposition}\label{prop:surj}
Let the spaces $G_{ad}$, $U_{ad}=U(\Omref)$, $Z=Z(\Omref)$ be  as in \eqref{fcnsp}, \eqref{defG0}, \eqref{G}, \eqref{medreg} and let \eqref{ass_l0} hold.
Then for any $\vec{f}\in Z^*$ there exists $(\uld{\vec{g}}, \uld{\vec{u}})\in G_{ad}\times U_{ad}$  such that $A_{\textup{PDE}}'(\vec{g}^*,\vec{u}^*)(\uld{ \vec{g}}, \uld{\vec{u}}) =\vec{f}$.
\end{Proposition}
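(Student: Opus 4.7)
The plan is to exploit the block-triangular structure of $A_{\textup{PDE}}'(\vec g^*,\vec u^*)$: the weak-form components testing against $\check{\bar q}$ and $\check\mu_N$ depend only on $(\uld{\check\pbar},\uld{\check\gfcn},\uld\lfcn)$, whereas those testing against $\check{\tilde q},\tilde v,\mu_{pl}$ couple $(\uld{\check\ptil},\uld\wtil,\uld\hfcn)$ to the already-determined $\uld{\check\pbar}$ and to $\uld\lfcn$. I would set $\uld\lfcn=0$, $\uld{\check\gfcn}=0$, and $\uld\hfcn=0$, a choice that lies in $G_0$ since the vanishing initial/boundary conditions built into $G_0$ are trivially satisfied. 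The surjectivity question then reduces to finding $\uld{\vec u}=(\uld{\check\pbar},\uld{\check\ptil},\uld\wtil)\in U(\Omref)$ that solves a linearized Westervelt--plate system on the fixed reference geometry with right-hand sides distributed across all five equations according to $\vec f=(f_{\bar q},f_{\tilde q},f_{\tilde v},f_{\mu_N},f_{\mu_{pl}})\in Z^*$.

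First, I would solve the decoupled strongly damped linear wave equation
\[
\uld{\check\pbar}_{tt}-c^2\check D^2_{\lfcn^*}\uld{\check\pbar}-b\check D^2_{\lfcn^*}\uld{\check\pbar}_t=f_{\bar q} \quad \text{on } \Omref,
\]
subject to the inhomogeneous Neumann trace $\nu_0\cdot M_{\lfcn^*}\check\nabla\uld{\check\pbar}=f_{\mu_N}$ on $\Gamref$, the absorbing condition on $\Gamma_a$, the homogeneous Neumann condition on $\Gampl$, and zero initial data. Well-posedness in the $\pbar$-component of $U(\Omref)$ follows from a standard Galerkin procedure and the linear part of the energy identity underlying Lemma~\ref{lem:estAPDE0}; membership in the stronger space $H^1(\zeroT H^2_{\Delta,1}(\Omref))$ is then obtained by elliptic regularity applied to $-c^2\Delta\uld{\check\pbar}-b\Delta\uld{\check\pbar}_t=f_{\bar q}-\uld{\check\pbar}_{tt}\in L^2(\zeroT L^2(\Omref))$ together with the prescribed Neumann datum $f_{\mu_N}\in L^2(\zeroT H^s(\Gamref))$.

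With $\uld{\check\pbar}$ fixed, the remaining coupled system for $(\uld{\check\ptil},\uld\wtil)$ is the linearized Westervelt--Kirchoff problem at $(\bar p^*,\tilde p^*)$, with interior forcing $f_{\tilde q}$ plus known lower-order terms involving $\uld{\check\pbar}$, plate-equation forcing $f_{\tilde v}+\kappa\uld{\check\pbar}_t$, inhomogeneous transmission $\Dnu{\uld{\check\ptil}}+\rho\uld\wtil_t=f_{\mu_{pl}}$ on $\Gampl$, homogeneous Neumann on $\Gamref$, the absorbing condition on $\Gamma_a$, and zero initial data. I would solve this by a Galerkin method in an eigenbasis of the mixed absorbing/Neumann Laplacian on $\Omref$ and of the hinged biharmonic on $\Gampl$, treating $f_{\mu_{pl}}$ and $f_{\tilde v}$ as natural boundary and distributional forcings in the weak formulation. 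Uniform bounds from the linear analogue of Lemma~\ref{lem:estAPDE0} then pass to the limit to yield $(\uld{\check\ptil},\uld\wtil)$ in the required subspaces of $U(\Omref)$, along the lines of the linear well-posedness argument of \cite{KaTu1}.

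The main technical obstacle will be reconciling the transmission forcing $f_{\mu_{pl}}\in L^2(\zeroT L^2(\Gampl))$ with the $H^1(\zeroT H^2_{\Delta,1}(\Omref))$ regularity demanded of $\uld{\check\ptil}$, which requires the Neumann trace $\Dnu{\uld{\check\ptil}}=f_{\mu_{pl}}-\rho\uld\wtil_t$ to lie in $L^2(\zeroT H^s(\Gampl))$ rather than merely in $L^2$. The coupling alleviates this: the plate equation upgrades the spatial regularity of $\uld\wtil_t$ through $\delta\Delpl^2\uld\wtil$, and the trace estimate \eqref{est_full_and_trace} shows that the Galerkin iterates acquire sufficient trace regularity automatically. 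If needed, one approximates $f_{\mu_{pl}}$ by smoother data, solves in the smooth setting with a lifting $P_e\in L^2(\zeroT H^{3/2+\epsilon}(\Omref))$ satisfying $\Dnu{P_e}=f_{\mu_{pl}}$ on $\Gampl$ and vanishing normal trace elsewhere, and then passes to the limit by density in $L^2(\zeroT L^2(\Gampl))$ using Lemma~\ref{lem:estAPDE0} for uniform bounds.
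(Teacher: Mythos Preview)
Your approach has a genuine gap. By setting $\uld\hfcn=0$ you are forced to solve the undamped plate equation
\[
\rho\,\uld\wtil_{tt}+\delta\,\Delpl^{2}\uld\wtil
=\kappa(\uld{\check\pbar}_t+\uld{\check\ptil}_t)+f_{\tilde v}
\]
with $f_{\tilde v}\in L^2(0,T;H^2_{\diamondsuit}(\Gampl)^*)$, the dual regularity coming from the third component of $Z$. For a hyperbolic plate with no dissipation, forcing in $L^2(0,T;H^{-2})$ produces at best $\uld\wtil\in C([0,T];L^2(\Gampl))\cap C^1([0,T];H^{-2}(\Gampl))$; it does \emph{not} yield $\uld\wtil\in L^2(0,T;H^2_{\diamondsuit}(\Gampl))$, let alone $\uld\wtil_{tt}\in L^2(0,T;H^2_{\diamondsuit}(\Gampl)^*)$, both of which are required for membership in the $\wtil$-component of $U$. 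The coupling terms $\kappa(\uld{\check\pbar}_t+\uld{\check\ptil}_t)$ may well be smoother, but the low-regularity contribution $f_{\tilde v}$ cannot be upgraded this way, and a density argument will not close because the a priori bounds blow up as you approach rough $f_{\tilde v}$.

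The paper avoids this entirely by \emph{using} the control freedom instead of discarding it: it sets $\uld\lfcn=0$, $\uld\gfcn=-f_N$ (so the Neumann condition for $\uld\pbar$ on $\Gamma_N$ becomes homogeneous), and crucially $\uld\hfcn:=-f_{\wtil}-\kappa\,\mathrm{tr}_{\Gampl}(\uld\pbar+\uld\ptil)_t$, which makes the plate right-hand side identically zero and allows the trivial choice $\uld\wtil=0$. The problem then collapses to a linear system for $(\uld\pbar,\uld\ptil)$ alone, with $\Dnu{\uld\ptil}=f_{pl}$ on $\Gampl$, handled by the dedicated energy estimate of Lemma~\ref{lem:enest_surj}. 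The paper explicitly remarks that it ``heavily made use of $\hfcn$ being one of the design variables''; this is precisely the missing idea in your proposal.
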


\begin{proof}
For better readability we skip the superscript ${}^*$ in $(\vec{g}^*,\vec{u}^*)=(\check{\gfcn}^*,\hfcn^*,\lfcn^*,\pbar^*,\ptil^*,\wtil^*)$.

Verifying condition \eqref{ZoweKurcyusz} means, given any $\vec{f}=(f_{\pbar}, f_{\ptil}, f_{\wtil}, f_N, f_{pl})\in Z^*$ we have to be able to choose $(\uld{\gfcn},\uld{\hfcn},\uld{\lfcn},\uld{\pbar},\uld{\ptil},\uld{\wtil})\in G\times U(\Omega)$, cf.\eqref{fcnsp}, \eqref{G}, such that the linearized PDE with inhomogeneity $\vec{f}$ holds.
In strong form and written in terms of the domain $\Omega=\Omega(\lfcn)$ rather than in terms of the reference domain $\Omref$, this implies that solvability of the following system \eqref{dpbar_decoup}, \eqref{dptil_decoup} needs to be shown.
\begin{equation}\label{dpbar_decoup}
\begin{aligned}
\uld{\pbar}_{tt}  - c^{2}\Delta \uld{\pbar} - b \Delta \uld{\pbar}_{t}  &= f_{\pbar} ~~\mbox{in}~~ \Omega \times [0,T] \\
\Dnu{\uld{\pbar}}+\absbc[\uld{\pbar},\uld{\pbar}_t]
&= 0 ~~\mbox{on}~~ \Gamma_{a} \times [0,T]  \\
\Dnu{ \uld{\pbar}}&=\uld{\gfcn}+f_N ~~\mbox{on}~~ \Gamma_{N} \times [0,T] \\
\Dnu{ \uld{\pbar}} &= 0  ~~\mbox{on}~~ \Gampl \times [0,T] \\
\uld{\pbar}(0)&=0, ~~\uld{\pbar}_{t}(0)=0,
\end{aligned}
\end{equation}
\begin{equation}\label{dptil_decoup}
\begin{aligned}
(1-2k(\pbar+\ptil))\, \uld{\ptil}_{tt}  - c^{2}\Delta \uld{\ptil} - b \Delta \uld{\ptil}_{t}  
&=4k(\pbar+\ptil)_t (\uld{\pbar}+\uld{\ptil})_t 
+2k(\pbar+\ptil)\uld{\pbar}_{tt} \\
&\qquad
+2k(\uld{\pbar}+\uld{\ptil})\, (\pbar+\ptil)_{tt}+f_{\ptil}~~\mbox{in}~~ \Omega \times [0,T] \\
 \Dnu{\uld{\ptil}}+\absbc[\uld{\ptil},\uld{\ptil}_{t}] 
 &= 0  ~~\mbox{on}~~ \Gamma_{a} \times [0,T]  \\
\Dnu{\uld{\ptil}}&=0 ~~\mbox{on}~~ \Gamma_{N} \times [0,T] \\
\Dnu{\uld{\ptil}} + \rho \uld{\wtil}_{t}&= f_{pl}  ~~\mbox{on}~~ \Gampl \times [0,T] \\
\rho\uld{\wtil}_{tt} + \delta\Delpl^{2} \uld{\wtil} 
\Damppl{+  \beta(-\Delpl)^{\gamma} \uld{\wtil}_{t} }
&= \kappa \uld{\ptil}_t +\kappa \uld{\pbar}_t + \uld{\hfcn}+f_{\wtil}  ~~\mbox{on}~~ \Gampl \times [0,T] \\
\uld{\ptil}(0)&= 0, ~~\uld{\ptil}_{t}(0)= 0\\
\uld{\wtil}(0)&= 0, ~~\uld{\wtil}_{t}(0)= 0,
\end{aligned}
\end{equation}
with 
$\uld{\wtil}=\uld{w}_{t}$.\\
Since $\uld{\lfcn}$ anyway does not help with reaching $\vec{f}$ and nonvanishing boundary variation would just complicate the situation, we choose $\uld{\lfcn}=0$.

In \eqref{dpbar_decoup}, we can just set $\uld{\gfcn}=-f_N$ and in \eqref{dptil_decoup} we can set 
$\uld{\hfcn}:=-f_{\wtil}-\kappa\,\text{tr}_{\partial \Gampl}(\uld{\pbar}+\uld{\ptil})_{t}\in L^2(\zeroT H^{2}_{\diamondsuit}(\Gampl)^*)$, which allows to choose $\uld{\wtil}=0$.
With this, only the $\pbar$ and $\ptil$ equations 
\begin{equation}\label{pp_lin}
\begin{aligned}
\uld{\pbar}_{tt}  - c^{2}\Delta \uld{\pbar} - b \Delta \uld{\pbar}_{t}  &= f_{\pbar} ~~\mbox{in}~~ \Omega \times [0,T] \\
(1-2k(\pbar+\ptil))\, \uld{\ptil}_{tt}  - c^{2}\Delta \uld{\ptil} - b \Delta \uld{\ptil}_{t}  
&=4k(\pbar+\ptil)_t (\uld{\pbar}+\uld{\ptil})_t 
+2k(\pbar+\ptil)\uld{\pbar}_{tt} \\
&\qquad
+2k(\uld{\pbar}+\uld{\ptil})\, (\pbar+\ptil)_{tt}+f_{\ptil}~~\mbox{in}~~ \Omega \times [0,T],
\end{aligned}
\end{equation}
equipped with homogeneous absorbing boundary conditions on $\Gamma_a$, homogeneous Neumann conditions on $\Gamma_N$ and $\Dnu{\uld{\ptil}}= f_{pl}$ on $\Gampl$
as well as homogeneous initial conditions, remain.

The variational form of \eqref{pp_lin} (conforming to the linearization of \eqref{eqn:var_L2L2} with $\uld{\wtil}=0$ and the function space setting \eqref{fcnsp}) is    
\begin{equation}\label{eqn:var_L2L2_pponly}
\begin{aligned}
&\int_0^T\Bigl\{
\int_\Omega \Bigl(
\bigl(\uld{\pbar}_{tt}  - c^{2}\Delta \uld{\pbar} - b \Delta \uld{\pbar}_{t} -f_{\pbar}\bigr)\,\qbar\\
&\qquad\qquad+\bigl((1+\mathfrak{a})\, \uld{\ptil}_{tt}  - c^{2}\Delta \uld{\ptil} - b \Delta \uld{\ptil}_{t} +\mathfrak{b} (\uld{\pbar}+\uld{\ptil})_t 
+\mathfrak{c}(\uld{\pbar}+\uld{\ptil})+\mathfrak{a}\uld{\pbar}_{tt} -f_{\ptil}\bigr)\,\qtil\Bigr)\, dx\\
&\qquad+ 
\int_{\Gamma_N}
\Dnu{\uld{\pbar}}\,\mu_{N}\, dS
+\int_{\Gampl}
(\Dnu{\uld{\ptil}}-f_{pl})\,\mu_{pl}\,dS
\Bigr\}\,dt\\
&=0 \qquad\text{ for all }(\qbar,\qtil)\in Z_{pp}
\end{aligned}
\end{equation}
with 
\begin{equation}\label{abc}
\mathfrak{a}=-2k(\pbar+\ptil) , \quad
\mathfrak{b}=-4k(\pbar+\ptil)_t, \quad
\mathfrak{c}=-2k(\pbar+\ptil)_{tt},
\end{equation}
initial conditions 
$(\uld{\pbar},\uld{\ptil})(0)=(0,0)$, 
$(\uld{\pbar}_t,\uld{\ptil}_t)(0)=(0,0)$ 
on the function spaces
\begin{equation}\label{fcnsp_pponly}
\begin{aligned}
&U_{pp}=\{p\in H^2(\zeroT L^2(\Omega))\cap 
H^1(\zeroT H^2_{\Delta,1}(\Omega))
\,:\, 
\Dnu p+\absbc[p,p_t]
=0\text{ on } \Gamma_{a}, \ \Dnu p=0\text{ on } \partial \Gampl\}\\
&\qquad\times\{p\in H^2(\zeroT L^2(\Omega))
\cap H^1(\zeroT H^2_{\Delta,1}(\Omega))
\,:\, 
\Dnu p+\absbc[p,p_t]
=0\text{ on } \Gamma_{a}, \ \Dnu p=0\text{ on } \Gamma_{N}\} \\
&Z_{pp}=\bigl(L^2(\zeroT L^2(\Omega))\bigr)^2
\times L^2(\zeroT H^{-s}(\Gamma_N)) \times L^2(\zeroT 
\Damppl{H^{-\min\{s,\gamma\}}(\Gampl))}
H^{-s}(\Gampl)),
\end{aligned}
\end{equation}
that is, the $\pbar,\ptil$ part of \eqref{fcnsp}.

Existence of a solution in $U_{pp}$ to \eqref{pp_lin} can be concluded from the following lemma with
$\mathfrak{a}=-2k(\pbar+\ptil)\in 
H^1(\zeroT H^2_{\Delta,1}(\Omega))$,
$\mathfrak{b}=-4k(\pbar+\ptil)_t\in 
L^2(\zeroT H^2_{\Delta,1}(\Omega))$,
$\mathfrak{c}=-2k(\pbar+\ptil)_{tt}\in L^2(\zeroT L^2(\Omega))$
together with Sobolev embeddings.

\begin{Lemma}\label{lem:enest_surj}
There exists $r>0$ such that for any 
$\mathfrak{a}\in L^\infty(\zeroT L^\infty(\Omega)) $,
$\mathfrak{b}\in L^2(\zeroT L^3(\Omega))$,
$\mathfrak{c}\in L^2(\zeroT L^2(\Omega))$ with 
\begin{equation}\label{smallness_coeffs}
\|\mathfrak{a}\|_{L^\infty(\zeroT L^\infty(\Omega))}+
\|\mathfrak{b}\|_{L^2(\zeroT L^3(\Omega))}+
\|\mathfrak{c}\|_{L^2(\zeroT L^2(\Omega))}\leq r
\end{equation}
and any 
$f_{\pbar},\, f_{\ptil}\, \in L^2(\zeroT L^2(\Omega))$,
$f_{pl}\in L^2(\zeroT H^s(\Gampl))$,
there exists a solution to \eqref{eqn:var_L2L2_pponly} in $U_{pp}$.
\end{Lemma}
Proof. See the appendix.
\\
The smallness conditions \eqref{smallness_coeffs} can be concluded from 
\begin{equation}\label{smallness_nonlin}
\Vert\mathfrak{a}\Vert_{H^1(\zeroT H^2_{\Delta,1}(\Omega))}+
\Vert\mathfrak{b}\Vert_{L^2(\zeroT 
H^2_{\Delta,1}(\Omega)
)}+
\Vert\mathfrak{c}\Vert_{L^2(\zeroT L^2(\Omega))}\leq \tilde{r}
\end{equation}
with small enough $\tilde{r}>0$
for $\pbar,\ptil$ being part of a minimizer, thus solving the PDE so that we can use the energy estimates from Lemma~\ref{lem:estAPDE0} to bound it. This includes the $L^\infty(\zeroT H^2_{\Delta,1}(\Omega))$ norm, which together with Stampacchia's method allows to guarantee nondegeneracy by smallness of $\|\mathfrak{a}\|_{L^\infty(\zeroT L^\infty(\Omega))}$.
Condition \eqref{smallness_nonlin} in its turn can be achieved by smallness of the prescribed initial data and smallness of $\gfcn$, $\hfcn$ due to the $\theta$ penalty term in the cost function; comparing the cost function value, e.g., to the one at $(\vec{0},\vec{u})$ with $\vec{u}$ some extension of the prescribed initial data. 

This proves \eqref{ZoweKurcyusz}; note that we have heavily meade use of $\hfcn$ being one of the design variables $\vec{g}$. 
\end{proof}



\section{Conclusions and Outlook}
In this paper, we have considered simultaneous optimization of the shape of a boundary part and of two excitation control functions in a coupled nonlinear acoustics-plate system.
\\
Future work might be concerned with controllability of this system, as well as with advanced models replacing the Westervelt and Kirchhoff plate equations. 
In particular, considering a deformable shell might also allow us to view its shape as part of the design variables.
However, the use of piezoelectric plates or shells would incorporate modeling of the excitation mechanism.

\bigskip
\section*{Appendix}
\setcounter{equation}{0}
\appendix
\section{Strong form of first order optimality conditions}
Summarizing, (for illustration purposes) we have formally obtained the following first order optimality conditions.

\noindent
\underline{State equations:}
\begin{equation}\label{stateeq}
\text{\eqref{pbar_decoup} and \eqref{ptil_decoup}}
\text{ with }\Omega=\Omega(\lfcn) 
\text{ and homogeneous initial data}
\end{equation}

\noindent
\underline{Adjoint equations:}
\def\bbar{\bar{b}}
\def\btil{\tilde{b}}
We note that with the abbreviations 
and collecting some terms, \eqref{adj_qbar} and \eqref{adj_qtilvtil}, can be written more compactly as 
\begin{equation}\label{adj_weak}
\begin{aligned}
&(\qbar,\qtil,\vtil, \mu_{N}, \mu_{pl} )\in Z(\lfcn) 
\text{ and }\\
&0=\int_0^T\Bigl\{\int_{\Omega(\lfcn)}\chi_{\textup{ROI}}(\pbar+\ptil-p_{\textup{d}})(\phibar+\phitil) \, dx\\
&\quad
+\int_{\Omega(\lfcn)} \Bigl( 
( \phibar_{tt} - c^2\Delta \phibar - b\Delta \phibar_t)\, \qbar
+( \phitil_{tt} - c^2\Delta \phitil - b\Delta \phitil_t
-2k\bigl((\pbar+\ptil)(\phibar+\phitil)\bigr)_{tt})\, \qtil\Bigr)\, dx
\\
&\qquad
 +\tfrac{\rho}{\kappa}\int_{\Gampl} 
 	 \Bigl( (\rho\psitil_{tt} + \beta(-\Delpl)^{\gamma} \psitil_{t} 
	 		- \kappa (\phitil+\phibar)_{t} ) \vtil
 +\delta\Delpl\psitil\, \Delpl\vtil  \Bigr) \,dS
\\
&\qquad
+\int_{\Gamma_N}
\Dnu{\phibar} \,\mu_{N}\, dS
+\int_{\Gampl}\Bigl(
(\Dnu{\phitil} + \rho \psitil_t)\,\mu_{pl}
\Bigr)\,dS
\Bigr\}dt
\\
&\qquad
\text{for all }(\phibar,\phitil,\psitil)\in U_0(\lfcn):=\{(\phibar,\phitil,\psitil)\in U(\lfcn)\, : \,
\phibar(0)=0,\, \phibar_t(0)=0,\,\phitil(0)=0,\, \phitil_t(0)=0,\,\\
&\hspace*{5cm}\psitil(0)=0,\, \psitil_t(0)=0\}
\end{aligned}
\end{equation}
where $U(\lfcn)$, $Z(\lfcn)$ are defined as in \eqref{fcnsp} with $\Omega=\Omega(\lfcn)$.
We (formally, since general elements of $Z(\lfcn)$ have hardly any differentiability) integrate by parts to remove all derivatives from the test functions, e.g.
\begin{align*}
   &\int_0^T\int_{\Omega(\lfcn)} 
   [b \partial_t+c^2](-\Delta)\phibar\, \qbar\, dx\, dt
   =\int_0^T\int_{\Omega(\lfcn)} 
   \phibar\, [-b \partial_t+c^2](-\Delta)\qbar\, dx\, dt\\
   &+\int_0^T\int_{\partial\Omega(\lfcn)} \Bigl(-\Dnu{\phibar}\,[-b \partial_t+c^2]\qbar
   +\phibar\,[-b \partial_t+c^2]\Dnu{\qbar}\Bigr)\, dS\, dt\\
   &+b\left[\int_{\Omega(\lfcn)} 
   \phibar\, (-\Delta)\qbar\, dx
   +\int_{\partial\Omega(\lfcn)} \Bigl(-\Dnu{\phibar}\,\qbar
   +\phibar\,\Dnu{\qbar}\Bigr)\, dS\right]_0^T,
\end{align*}
and obtain
\begin{equation*}
\begin{aligned}
&0=\int_0^T\Bigl\{
\int_{\Omega(\lfcn)}
\phibar\Bigl(\chi_{\textup{ROI}}(\pbar+\ptil-p_{\textup{d}})
+\qbar_{tt} - c^2\Delta \qbar + b\Delta \qbar_t -2k(\pbar+\ptil)\qtil_{tt}\Bigr)\, dx\\
&\qquad\qquad\qquad +\int_{\Gampl}\phibar\Bigl(
\rho \vtil_t 
+[-b \partial_t+c^2]\Dnu{\qbar}
\Bigr)\,dS\\
&\qquad\qquad\qquad
+\int_{\Gamma_N}\Dnu{\phibar}
\Bigl(\mu_{N} -[-b \partial_t+c^2]\qbar \Bigr)
\,dS
\\
&\qquad\qquad\qquad +\int_{\Gamma_N}\phibar\,[-b \partial_t+c^2]\Dnu{\qbar}\,dS
+\int_{\Gamma_a}\phibar\,[-b \partial_t+c^2](\Dnu{\qbar}-\tfrac{1}{c}\qbar_t)\,dS\\
&\qquad\qquad + \int_{\Omega(\lfcn)}
\phitil\Bigl(\chi_{\textup{ROI}}(\pbar+\ptil-p_{\textup{d}})
+\qtil_{tt} - c^2\Delta \qtil + b\Delta \qtil_t -2k(\pbar+\ptil)\qtil_{tt}\Bigr)\, dx\\
&\qquad\qquad\qquad +\int_{\Gampl}\phitil\Bigl(
\rho \vtil_t 
+[-b \partial_t+c^2]\Dnu{\qtil}
\Bigr)\,dS\\
&\qquad\qquad\qquad
+\int_{\Gampl}\Dnu{\phitil}
\Bigl(\mu_{pl} -[-b \partial_t+c^2]\qtil \Bigr)
\,dS
\\
&\qquad\qquad\qquad +\int_{\Gamma_N}\phitil\,[-b \partial_t+c^2]\Dnu{\qtil}\,dS
+\int_{\Gamma_a}\phitil\,[-b \partial_t+c^2](\Dnu{\qtil}-\tfrac{1}{c}\qtil_t)\,dS
\\
&\qquad\qquad
 +\tfrac{\rho}{\kappa}\int_{\Gampl} 
 	 \psitil\Bigl( (\rho\vtil_{tt} - \beta(-\Delpl)^{\gamma} \vtil_{t} +\delta(-\Delpl)^2\vtil  
-\kappa 
\mu_{pl\,t}
\Bigr) \,dS
\Bigr\}dt
\\
&\qquad
\text{for all }(\phibar,\phitil,\psitil)\in U_0(\lfcn):=\{(\phibar,\phitil,\psitil)\in U(\lfcn)\, : \,
\phibar(0)=0,\, \phibar_t(0)=0,\,\phitil(0)=0,\, \phitil_t(0)=0,\,\psitil(0)=0,\, \psitil_t(0)=0\}
\end{aligned}
\end{equation*}
Here we have used 
$\Dnu{\phibar}=0$ on $\Gampl$, 
$\Dnu{\phitil}=0$ on $\Gamma_N(\lfcn)$, 
$\Dnu{\phibar}+\tfrac{1}{c}\phibar_t=0$ on $\Gamma_a$,
$\Dnu{\phitil}+\tfrac{1}{c}\phitil_t=0$ on $\Gamma_a$ (by definition of the function space $U(\lfcn)$), 
and already skipped the final time terms, as their vanishing (due to variation of the test functions) is equivalent to homogeneous zero and first order final time conditions on $(\qbar,\qtil,\vtil)$.

Since the third and seventh lines imply $\mu_{N} =[-b \partial_t+c^2]\qbar\vert_{\Gamma_N}$ and
$\mu_{pl} =[-b \partial_t+c^2]\qtil\vert_{\Gampl}$
we arrive at the following strong formulation of the adjoint system.
\begin{equation}\label{adj_strong}
\begin{aligned}
\qbar_{tt}  - c^{2}\Delta \qbar + b \Delta \qbar_{t}  &= \chi_{\textup{ROI}}(\pbar+\ptil-p_{\textup{d}})+2k(\pbar+\ptil)\qtil_{tt} ~~\mbox{in}~~ \Omega(\lfcn) \times [0,T] \\
(1-2k(\pbar+\ptil))\qtil_{tt}  - c^{2}\Delta \qtil + b \Delta \qtil_{t}  &= \chi_{\textup{ROI}}(\pbar+\ptil-p_{\textup{d}}) ~~\mbox{in}~~ \Omega(\lfcn) \times [0,T] \\
\rho\vtil_{tt} + \delta\Delpl^{2} \vtil -  \beta(-\Delpl)^{\gamma} \vtil_{t} &= \kappa [-b\partial_t+c^2]\qtil_{t}  ~~\mbox{on}~~ \Gampl \times [0,T] \\
\rho\vtil_{t}=- [-b\partial_t+c^2]\Dnu{\qbar} &=-[-b\partial_t+c^2]\Dnu{\qtil} ~~\mbox{on}~~ \Gampl \times [0,T] \\
c\Dnu{ \qbar} +\qbar_t=c\Dnu{ \qtil} +\qtil_t&= 0 ~~\mbox{on}~~ \Gamma_{a} \times [0,T]  \\
\Dnu{\qbar}=\Dnu{\qtil}&=0 ~~\mbox{on}~~ \Gamma_{N}(\lfcn) \times [0,T] \\
\qbar(T)=0, ~~\qbar_{t}(T)=0, ~~
\qtil(T)=0, ~~\qtil_{t}(T)&=0, ~~
\vtil(T)=0, ~~\vtil_{t}(T)=0.
\end{aligned}
\end{equation}

\noindent
\underline{Gradient equations:}
\begin{equation}\label{gradienteq}
\begin{aligned}
&\regpar \regop_\gfcn^* \regop_\gfcn (\gfcn-\gfcn_0) = 
\mu_{N}\\
&\regpar \regop_\hfcn^* \regop_\hfcn (\hfcn-\hfcn_0) = \tfrac{\rho}{\kappa}\,\vtil\\
&\regpar \widehat{\regop_\lfcn^*\regop_\lfcn(\lfcn-\lref)} = 
-\int_0^T \Bigl(
\bigl(\pbar_{tt}  - c^{2}\Delta \pbar - b \Delta \pbar_{t}\bigr)\,\qbar
+\bigl(\ptil_{tt}  - c^{2}\Delta \ptil - b \Delta \ptil_{t}-k(\pbar+\ptil)_{tt}\bigr)\,\qtil
\\
&\qquad\qquad\qquad\qquad
+\partial_{\nu_{\lfcn}} \bigl(
(\Dnu{\pbar}-\gfcn)\,\mu_{N}\bigr) + 
(\Dnu{\pbar}-\gfcn)\,\mu_{N}\, \widehat{H}_\lfcn\Bigr)\, dt
\end{aligned}
\end{equation}

\bigskip

Note that with \eqref{pbar_decoup}, the state equation for $\pbar$ is decoupled from the equation for $(\ptil,\wtil)$. However, in \eqref{adj_strong}, the equation for $(\qtil,\vtil)$ is not decoupled from the equation for $\qbar$.

\newpage 
\section{Energy estimates for existence of a minimizer; proof of Lemma~\ref{lem:estAPDE0}}

To conclude boundedness of some norm of a sequence of states $(\vec{u}_n)_{n\in\mathbb{N}}$ from boundedness of the controls $(\vec{g}_n)_{n\in\mathbb{N}}$ through $A_{\text{PDE}}(\vec{g}_n,\vec{u}_n)=0$ in step 4. of the proof of existence of a minimizer, we can in principle make use of \cite[Lemma 3.1]{KaTu1} (reducing the temporal differentiability by one order) and \cite[Proposition 3.5 (i)]{KaTu1} with $a=0$, $\Gamma_D=\emptyset$, $g_N=g$, $\alpha=1$, $f=k((\pbar+\ptil)^2)_{tt}$, $\wtil=\wtil_t$.
In order to track dependence of constants on the domain (which varies here), we provide the estimates explicitly here. This also gives us the opportunity to save a bit on temporal differentiability of $g$ and to incorporate the slightly more general absorbing boundary conditions that we are using here.

Due to $A_{\text{PDE}}(\vec{g}_n,\vec{u}_n)=0$, differentiating the PDE for $\wtil_n$ with respect to time, omitting the subscript $n$ and using the abbreviation $\wttil=\wtil_t=w_{tt}$ we have 
\begin{equation}\label{PDE_appendix}
\begin{aligned}
\pbar_{tt}  - c^{2}\Delta \pbar - b \Delta \pbar_{t}  &= 0 ~~\mbox{in}~~ \Omega \times [0,T] \\
\ptil_{tt}  - c^{2}\Delta \ptil - b \Delta \ptil_{t}  &= k((\pbar+\ptil)^2)_{tt} ~~\mbox{in}~~ \Omega \times [0,T] \\
\rho\wttil_{tt} + \delta\Delpl^{2} \wttil 
\Damppl{+  \beta(-\Delpl)^{\gamma} \wttil_{t} }
&= \kappa (\ptil_{tt} + \pbar_{tt}) +\hfcn_t  ~~\mbox{on}~~ \Gampl \times [0,T] \\
\Dnu{ \pbar_t}&=\gfcn_t, \quad \Dnu{ \ptil_t}=0 ~~\mbox{on}~~ \Gamma_{N} \times [0,T] \\
\Dnu{\pbar}+\absbc[\pbar,\pbar_{t}]&= 0, \quad \Dnu{\ptil}+\absbc[\ptil,\ptil_t] =0 ~~\mbox{on}~~ \Gamma_{a} \times [0,T]  \\
\Dnu{ \pbar} &= 0, \quad \Dnu{\ptil}=-\rho\wttil  ~~\mbox{on}~~ \Gampl \times [0,T] \\
\pbar(0)=0, ~~\pbar_{t}(0)=0, \quad \ptil(0)&=p_0,~~\ptil_{t}(0)=p_1, \quad \wttil(0)=w_1,~~\wttil_{t}(0)=w_2, 
\end{aligned}
\end{equation}
with $w_2=\tfrac{1}{\rho}\bigl(\kappa p_1 +\hfcn(0)
-\delta\Delpl^{2} w_0 
\Damppl{-  \beta(-\Delpl)^{\gamma} w_1 }
\bigr)$.

We test the first equation in \eqref{PDE_appendix} with $-\Delta\pbar_t$ and integrate by parts to obtain
\begin{equation}\label{enid_pbar_appendix}
\begin{aligned}
& \frac{1}{2} \|\nabla\pbar_{t}(t) \|_{L^2(\Omega)}^2
+\frac{c^2}{2} \|\Delta\pbar(t) \|_{L^2(\Omega)}^2 
+ b   \int_{0}^{t} \|\Delta\pbar_t\|_{L^2(\Omega)}^2 \, ds
+\betaabs \int_0^t\|\pbar_{tt}\|_{L^2(\Gamma_a)}^2\, ds
+\frac{\gammaabs}{2} \|\pbar_{t}(t) \|_{L^2(\Gamma_a)}^2\\
&=\int_0^t\int_{\Gamma_N}\pbar_{tt} g_{t} \, dS\, ds
\end{aligned}
\end{equation}

Multiplying the second and the third equation in \eqref{PDE_appendix} by $( -\Delta\pbar_t, \frac{\rho}{\kappa}\wttil_t)$ yields
\begin{equation}\label{enid_ptilwtiltil_appendix}
\begin{aligned}
& \frac{1}{2}\left[\|\nabla\ptil_t\|_{L^2(\Omega)}^2\right|_0^t
+\frac{c^2}{2}\left[\|\Delta\ptil\|_{L^2(\Omega)}^2\right|_0^t
+b \int_0^t\|\Delta\ptil_t\|_{L^2(\Omega)}^2\, ds
+\betaabs \int_0^t\|\ptil_{tt}\|_{L^2(\Gamma_a)}^2\, ds
+\frac{\gammaabs}{2} \left[\|\ptil_{t}\|_{L^2(\Gamma_a)}^2\right|_0^t\\
&+\frac{\rho}{\kappa}\Bigl(
\frac{\rho}{2}\left[\|\wttil_t\|_{L^2(\Gampl)}^2\right|_0^t
+\frac{\delta}{2}\left[\|\Delta_{pl}\wttil\|_{L^2(\Gampl)}^2\right|_0^t
\Damppl{+\beta \int_0^t\|(-\Delpl)^{\gamma/2} \wttil_{t}\|_{L^2(\Gampl)}^2\, ds}
\Bigr) 
\\
&=-\int_0^t\int_\Omega k((\pbar+\ptil)^2)_{tt}\, \Delta\ptil_t\, dx\, ds
+ \int_0^t\int_{\Gampl}\bigl(\rho\pbar_{tt}+\tfrac{\rho}{\kappa}\hfcn_t\bigr)\, \wttil_t\, dS \,ds,
\end{aligned}
\end{equation}
 where we have used cancellation of the $\ptil_{tt}\wttil_t$ terms on $\Gampl$. In order to cancel the term  $\int_0^t\int_{\Gampl} \pbar_{tt}\wttil_t \, dS\,ds$ term as well, 
 we additionally multiply the first and the second equations in \eqref{PDE_appendix} by $(-\Delta \tilde{p}_{t}, -\Delta \bar{p}_{t}) $ 
 and integrate by parts to obtain
 \begin{equation}\label{enid_pbarptil_appendix}
\begin{aligned}
&  \left[ \int_{\Omega}  \nabla \pbar_{t} \nabla \ptil_{t} \, dx \right|_{0}^{t}
+\left[ \int_{\Omega}  c^{2} \Delta \pbar \Delta \ptil \, dx\right|_{0}^{t} 
+ 2 b \int_{0}^{t}  \int_{\Omega}  \Delta \pbar_{t} \Delta \ptil_{t} dx  \,ds\\
& \indeq\indeq
 + 2 \betaabs \int_{0}^{t} \int_{\Gamma_{a}} \ptil_{tt} \pbar_{tt} \, dS \, ds
+ \gammaabs \left[ \int_{\Gamma_{a}} \ptil_{t} \pbar_{t} \, dS \right|_{0}^{t}
\\
 & \indeq\indeq=
 - \int_{0}^{t} \int_{\Gampl} \pbar_{tt} \rho \wttil_{t} \, dS \, ds 
 + \int_{0}^{t}\int_{\Gamma_{N}} \ptil_{tt} g_{t} \, dS \, ds 
  -\int_0^t\int_\Omega k((\pbar+\ptil)^2)_{tt}\, \Delta\pbar_t\, dx\, ds
 \end{aligned}
 \end{equation}
Here the initial terms vanish due to the homogeneous initial conditions on $\pbar$.

Adding \eqref{enid_ptilwtiltil_appendix} and \eqref{enid_pbarptil_appendix} to cancel the boundary term $\pbar_{tt}\wttil_t$ on $\Gampl$, and estimating all the other integrals on the left hand side of the second equation using H\"older's and Young's inequalities, we get
\[
\begin{aligned}
& \frac{1}{2}\left[\|\nabla\ptil_t\|_{L^2(\Omega)}^2\right|_0^t
+\frac{c^2}{2}\left[\|\Delta\ptil\|_{L^2(\Omega)}^2\right|_0^t
+b \int_0^t\|\Delta\ptil_t\|_{L^2(\Omega)}^2\, ds
+\betaabs \int_0^t\|\ptil_{tt}\|_{L^2(\Gamma_a)}^2\, ds
+\frac{\gammaabs}{2} \left[\|\ptil_{t}\|_{L^2(\Gamma_a)}^2\right|_0^t\\
&+\frac{\rho}{\kappa}\Bigl(
\frac{\rho}{2}\left[\|\wttil_t\|_{L^2(\Gampl)}^2\right|_0^t
+\frac{\delta}{2}\left[\|\Delta_{pl}\wttil\|_{L^2(\Gampl)}^2\right|_0^t
\Damppl{+\beta \int_0^t\|(-\Delpl)^{\gamma/2} \wttil_{t}\|_{L^2(\Gampl)}^2\, ds}
\Bigr) 
\\
&\leq 
\left | \int_0^t\int_\Omega k((\pbar+\ptil)^2)_{tt}\, \Delta (\ptil_t +\pbar_{t}) 
\, dx\, ds \right|
+ \left |\int_{0}^{t}\int_{\Gamma_{N}} \ptil_{tt} g_{t} \, dS \, ds \right|
+\left | \int_0^t\int_{\Gampl}\tfrac{\rho}{\kappa}\hfcn_t \, \wttil_t\, dS \,ds\right|
\\
& \indeq \indeq
+ \frac{1}{4} \|\nabla\ptil_t(t) \|_{L^2(\Omega)}^2
+  \|\nabla\pbar_t(t) \|_{L^2(\Omega)}^2
+ \frac{c^{2}}{4} \|\Delta\ptil(t) \|_{L^2(\Omega)}^2
 + c^{2} \|\Delta\pbar(t) \|_{L^2(\Omega)}^2
\\
& \indeq \indeq
+ \frac{b}{2} \int_{0}^{t} \|\Delta\ptil_t \|_{L^2(\Omega)}^2 \, ds
+ 2b \int_{0}^{t} \|\Delta\pbar_t \|_{L^2(\Omega)}^2 \, ds
+\frac{\gammaabs}{4} \|\ptil_{t}(t) \|_{L^2(\Gamma_a)}^2
+ \gammaabs \|\pbar_{t}(t)\|_{L^2(\Gamma_a)}^2
\\
& \indeq \indeq
+\frac{\betaabs}{2} \int_0^t\|\ptil_{tt}\|_{L^2(\Gamma_a)}^2\, ds
+2\betaabs \int_0^t\|\pbar_{tt}\|_{L^2(\Gamma_a)}^2\, ds.
\end{aligned}
\]
where 
\[
\begin{aligned}
& k \int_0^t\int_\Omega ((\pbar+\ptil)^2)_{tt}\, \Delta( \ptil_t + \pbar_{t}) \, dx\, ds\\
&\leq 
\frac{k^2}{b} \|((\pbar+\ptil)^2)_{tt}\|_{L^2(L^2(\Omega))}^2
+ \frac{b}{4} \|\Delta\ptil_t\|_{L^2(L^2(\Omega))}^2 + \frac{b}{4} \|\Delta\pbar_t\|_{L^2(L^2(\Omega))}^2
\end{aligned}
\]

The norms of $\ptil$ on the right hand side can then be absorbed into the left hand side of the inequality while the terms involving norms of $\pbar$ can be absorbed after adding four times inequality 
\eqref{enid_pbar_appendix}.
Hence, we have the inequality
\begin{equation}\label{enest_appendix-ex-min}
\begin{aligned}
& \frac{1}{4}\|\nabla\ptil_t(t)\|_{L^2(\Omega)}^2
+\frac{c^2}{4}\|\Delta\ptil(t) \|_{L^2(\Omega)}^2
+\frac{b}{4} \int_0^t\|\Delta\ptil_t\|_{L^2(\Omega)}^2\, ds
+\frac{\betaabs}{2} \int_0^t\|\ptil_{tt}\|_{L^2(\Gamma_a)}^2\, ds
+\frac{\gammaabs}{4} \|\ptil_{t}(t)\|_{L^2(\Gamma_a)}^2\\
&+ \frac{1}{4} \|\nabla\pbar_{t}(t) \|_{L^2(\Omega)}^2
+\frac{c^2}{4} \|\Delta\pbar(t) \|_{L^2(\Omega)}^2 
+ \frac{b}{4}   \int_{0}^{t} \|\Delta\pbar_t\|_{L^2(\Omega)}^2 \, ds
+\frac{\betaabs}{2} \int_0^t\|\pbar_{tt}\|_{L^2(\Gamma_a)}^2\, ds
+\frac{\gammaabs}{4} \|\pbar_{t}(t) \|_{L^2(\Gamma_a)}^2\\
&+\frac{\rho}{\kappa}\Bigl(
\frac{\rho}{2}\|\wttil_t(t) \|_{L^2(\Gampl)}^2
+\frac{\delta}{2}\|\Delta_{pl}\wttil(t) \|_{L^2(\Gampl)}^2
\Damppl{+\beta \int_0^t\|(-\Delpl)^{\gamma/2} \wttil_{t}\|_{L^2(\Gampl)}^2\, ds}
\Bigr) 
\\
&\leq 
\frac{1}{2} \|\nabla p_1\|_{L^2(\Omega)}^2
+\frac{c^2}{2}\|\Delta p_{0} \|_{L^2(\Omega)}^2
+\frac{\gammaabs}{2} \| p_{1}\|_{L^2(\Gamma_a)}^2
+\frac{\rho}{\kappa}\Bigl(
\frac{\rho}{2}\| w_2\|_{L^2(\Gampl)}^2
+\frac{\delta}{2} \|\Delta_{pl}w_{1} \|_{L^2(\Gampl)}^2 \Bigr) 
\\ 
& \indeq \indeq
+ \frac{k^2}{b} \|((\pbar+\ptil)^2)_{tt}\|_{L^2(L^2(\Omega))}^2
+ \left |\int_{0}^{t}\int_{\Gamma_{N}} (\ptil_{tt}+\pbar_{tt}) g_{t} \, dS \, ds \right|
+\left | \int_0^t\int_{\Gampl}\tfrac{\rho}{\kappa}\hfcn_t \, \wttil_t\, dS \,ds\right|
\end{aligned}
\end{equation}

An estimate on the second time derivative of $\pbar$ can be bootstrapped from the PDE, 
\[
\begin{aligned}
&\Vert\pbar_{tt}\Vert_{L^2(0,t;L^2(\Omega))}^2= 
\Vert c^2\Delta \pbar + b \Delta \pbar_{t}\Vert_{L^2(0,t;L^2(\Omega))}^2\\
&=\Vert c^2\int_0^t\Delta \pbar_t(s)\,ds + b \Delta \pbar_{t}\Vert_{L^2(0,t;L^2(\Omega))}^2
\leq 
(c^4T+b^2) \Vert\Delta \pbar_{t}\Vert_{L^2(0,t;L^2(\Omega))}^2,  
\end{aligned}
\]
where we have used $\pbar(0)=0$ and the right hand side can be bounded by left hand side terms in \eqref{g}. 
Likewise, we can proceed for $\ptil$ to obtain
\[
\Vert\ptil_{tt}\Vert_{L^2(0,t;L^2(\Omega))}^2\leq 
4c^4T\Vert \Delta p_0\Vert_{L^2(\Omega))}^2
+ 4(c^4T+b^2) \Vert\Delta \ptil_{t}\Vert_{L^2(0,t;L^2(\Omega))}^2
+2k^2\Vert((\pbar+\ptil)^2)_{tt}\Vert_{L^2(0,t;L^2(\Omega))}^2.
\]
Adding a sufficiently small multiple $\mu=\frac{b}{32(c^4T+b^2)}$ of this to  \eqref{enest_appendix-ex-min} we obtain
\begin{equation}\label{enest_appendix-ex-min_withptt} 
\begin{aligned}
& \sum_{p\in\{\pbar,\ptil\}} \Bigl(
\mu \int_0^t\|p_{tt}\|_{L^2(\Omega)}^2\, ds
+\frac{1}{4}\|\nabla p_t(t)\|_{L^2(\Omega)}^2
+\frac{c^2}{8}\|\Delta p(t) \|_{L^2(\Omega)}^2
+\frac{b}{8} \int_0^t\|\Delta p_t\|_{L^2(\Omega)}^2\, ds\\
&\phantom{\sum_{p\in\{\pbar,\ptil\}} \Bigl(}
+\frac{\betaabs}{2} \int_0^t\|p_{tt}\|_{L^2(\Gamma_a)}^2\, ds
+\frac{\gammaabs}{4} \|p_{t}(t)\|_{L^2(\Gamma_a)}^2\Bigr)\\
&\phantom{\sum_{p\in\{\pbar,\ptil\}}}+\frac{\rho}{\kappa}\Bigl(
\frac{\rho}{2}\|\wttil_t(t) \|_{L^2(\Gampl)}^2
+\frac{\delta}{2}\|\Delta_{pl}\wttil(t) \|_{L^2(\Gampl)}^2
\Damppl{+\beta \int_0^t\|(-\Delpl)^{\gamma/2} \wttil_{t}\|_{L^2(\Gampl)}^2\, ds}
\Bigr) 
\\
&\leq 
\frac{1}{2} \|\nabla p_1\|_{L^2(\Omega)}^2
+(\frac{c^2}{2}+\mu 4c^4T)\|\Delta p_{0} \|_{L^2(\Omega)}^2
+\frac{\gammaabs}{2} \| p_{1}\|_{L^2(\Gamma_a)}^2
\\&\quad
+\frac{\rho}{\kappa}\Bigl(
\frac{\rho}{2}\| w_2\|_{L^2(\Gampl)}^2
+\frac{\delta}{2} \|\Delta_{pl}w_{1} \|_{L^2(\Gampl)}^2 \Bigr) 
+ (\frac{1}{b}+2\mu)\,k^2 \|((\pbar+\ptil)^2)_{tt}\|_{L^2(L^2(\Omega))}^2
\\& \quad
+ \left |\int_{0}^{t}\int_{\Gamma_{N}} (\ptil_{tt}+\pbar_{tt}) g_{t} \, dS \, ds \right|
+\left | \int_0^t\int_{\Gampl}\tfrac{\rho}{\kappa}\hfcn_t \, \wttil_t\, dS \,ds\right|
\end{aligned}
\end{equation}
The left hand side of \eqref{enest_appendix-ex-min_withptt} 
induces the energy defined by \eqref{def_energy}.
Note that in $\mathcal{E}$, the zero order in space term in the full $H^{1}(\Omega)$ norm of $\pbar_t(t)$ and $\ptil_t(t)$ is obtained from the estimate
\[
\|\ptil_t(t)\|_{L^{2}(\Omega)}^2
=\|\ptil_t(0)+\int_0^t\ptil_{tt}(s)\, ds\|_{L^{2}(\Omega)}^2
\leq 2\|\ptil_t(0)\|_{L^{2}(\Omega)}^2 +2T\int_0^t\|\ptil_{tt}(s)\|_{L^2(\Omega)}^2\, ds.
\]

We continue by estimating the right hand side terms in \eqref{enest_appendix-ex-min_withptt} and begin with the boundary terms. 
Since we do not have traces of $\pbar_{tt}$ and $\ptil_{tt}$ on $\Gamma_N$, we use the integration by parts identity 
\[
\begin{aligned}
\int_{0}^{t}\int_{\Gamma_{N}} (\ptil_{tt}+\pbar_{tt}) g_{t} \, dS \, ds
=-\int_{0}^{t}\int_{\Gamma_{N}} (\ptil_{t}+\pbar_{t}) g_{tt} \, dS \, ds
+\left[\int_{\Gamma_{N}} (\ptil_{t}+2\pbar_{t}) g_{t} \, dS\right|_0^t
\end{aligned}
\]
and estimate as 
\begin{equation}
\begin{aligned}\label{g}
 \left|\int_{0}^{t}\int_{\Gamma_{N}} (\ptil_{tt}+\pbar_{tt}) g_{t} \, dS \, ds \right|
& \leq C_{tr} \int_{0}^{t} (\Vert \ptil_{t} \Vert_{H^{1}}^{2}  
+ \Vert \pbar_{t} \Vert_{H^{1}}^{2})\, ds 
+  \int_{0}^{t} \Vert g_{tt} \Vert_{
{L^2} 
(\Gamma_{N})}^{2} \, ds
\\ & \indeq \indeq
+ \epsilon C_{tr} \Vert \ptil_{t}(t) \Vert_{H^{1}}^{2}
+\epsilon C_{tr} \Vert \pbar_{t}(t) \Vert_{H^{1}}^{2}
+ \frac{1}{\epsilon} \Vert g_{t} \Vert_{L^{\infty}(\zeroT
{L^2} 
(\Gamma_{N}))}^{2}
\\ & \indeq \indeq
+ C_{tr} \Vert p_{1} \Vert^{2}_{H^{1}} 
+ \Vert g_{t}(0) \Vert^{2}_{L^2(\Gamma_{N})}.
\end{aligned}
\end{equation}
where $C_{tr} \equiv \|\text{tr}\|^{2}_{H^1(\Omega_n)\to L^2(\Gamma_{N,n})}$.
{While using $\|\text{tr}\|^{2}_{H^1(\Omega_n)\to H^{1/2}(\Gamma_{N,n})}$ would allow for lower regularity in $\gfcn$, it would require higher regularity in $\lfcn$ to show uniformity of the embedding constant with respect to domain variations (that is, its independence on $n$, see below); for this reason we stay with the coarser estimate. 
Due to the compatibility condition \eqref{compat}, we have 
$\Vert g_{t}(0) \Vert_{L^2(\Gamma_{N})}=\Vert \Dnu{\check{p}_1} \Vert_{L^2(\Gamma_{N})}\leq C \mathcal{E}[\pbar,\ptil,\wttil](0)$.
}

Similarly, the last term on the right hand side of \eqref{enest_appendix-ex-min}
can be estimated as
\[
\begin{aligned}
\left| \int_0^t\int_{\Gampl} \hfcn_t \, \wttil_t\, dS \,ds\right| 
\leq \frac12\Vert \hfcn_{t} \Vert^{2}_{L^{2}(0,t; L^2(\Gamma_{pl}))}
+ \frac12\int_{0}^{t} \Vert \wttil_{t} \Vert^{2}_{L^{2}(\Gamma_{pl})}\, ds.
\end{aligned}
\]

It is important to note that the nonlinear term on the right hand side in \eqref{enest_appendix-ex-min_withptt} appears with a higher power than the corresponding term on the left hand side.
It can be estimated by 
\[
\|((\pbar+\ptil)^2)_{tt}\|_{L^2(L^2(\Omega))}^2
\leq 2\|\pbar+\ptil\|_{L^\infty(L^\infty(\Omega))}^2 \|\pbar_{tt}+\ptil_{tt}\|_{L^2(L^2(\Omega))}^2
+2\|\pbar_t+\ptil_t\|_{L^4(L^4(\Omega))}^4
\]
By embedding and interpolation, as well as Stampacchia's method \cite[Lemma 4.1]{KaTu1} we have
\begin{equation}\label{Stampacchia}
\Vert \pbar(t)\Vert_{L^\infty(\Omega)}\leq K(\mathfrak{s},\Omega) (\Vert \Delta \pbar(t)\Vert_{L^2(\Omega)} + \Vert \pbar(t)\Vert_{L^2(\Omega)} +\Vert \gfcn(t)\Vert_{L^{\mathfrak{s}}(\Gamma_N)}+ \Vert a[\pbar,\pbar_t]\Vert_{L^{\mathfrak{s}}(\Gamma_a)})
\end{equation}
and similarly
\begin{equation}\label{Stampacchia2}
\begin{aligned}
&\Vert \ptil(t)\Vert_{L^\infty(\Omega)}
\leq K(\mathfrak{s},\Omega) (\Vert \Delta \ptil(t)\Vert_{L^2(\Omega)} 
+ \Vert \ptil(t)\Vert_{L^2(\Omega)} 
+ \Vert \wttil(t)\Vert_{L^2(\Gampl)}
+ \Vert a[\ptil,\ptil_t]\Vert_{L^{\mathfrak{s}}(\Gamma_a)});
\\
&\text{with }\mathfrak{r}=2\geq d/2, \quad \mathfrak{s}>d-1
\end{aligned}\end{equation}
Here the boundary term stemming from the absorbing boundary conditions can be bounded by trace estimates 
\[
\begin{aligned}
&\Vert a[\pbar,\pbar_t]\Vert_{L^{\mathfrak{s}}(\Gamma_a)}
\leq
\betaabs\Vert \pbar_t \Vert_{L^{\mathfrak{s}}(\Gamma_a)}
+\gammaabs\Vert \pbar \Vert_{L^{\mathfrak{s}}(\Gamma_a)}\\
&\leq \|\text{tr}\|_{H^1(\Omega)\to L^2(\Gamma_a)} (\betaabs \|\pbar_t\|_{H^1(\Omega)} +\gammaabs \|\pbar\|_{H^1(\Omega)})
\end{aligned}\]
Moreover, by Sobolev's embedding 
\begin{equation}
\label{pbartt}
\begin{aligned}
\Vert \pbar_{t} +\ptil_{t} \Vert^{4}_{L^4(0,t;L^4(\Omega))}
&\leq C_{H^{1},L^4}^{\Omega}  \int_{0}^{t }(\Vert \pbar_{t} \Vert_{H^{1}}^{4}+\Vert \ptil_{t} \Vert_{H^{1}}^{4} )\, ds. 
\end{aligned}
\end{equation}
This altogether yields
\[
\|((\pbar+\ptil)^2)_{tt}\|_{L^2(L^2(\Omega))}^2
\leq C\Bigl(\int_{0}^{t} (\mathcal{E}[\pbar,\ptil,\wttil](s))^2\, ds
+ \sup_{s\in [0,t]}(\mathcal{E}[\pbar,\ptil,\wttil](s))^2 \Bigr).
\]

With these estimates, the relation \eqref{enest_appendix-ex-min_withptt}, provides us with an energy estimate of the form 
\begin{align*}
\mathcal{E}[\pbar,\ptil,\wttil](t) 
\leq \overline{C}(T)\Bigl(\mathcal{E}[\pbar,\ptil,\wttil](0)+ 
\int_0^t(\mathcal{E}[\pbar,\ptil,\wttil](s))^2\, ds
\\
\indeq 
+ \sup_{s\in [0,t]} (\mathcal{E}[\pbar,\ptil,\wttil](s))^2
+\|\text{data}\|^2
\Bigr)
\end{align*}
where $\mathcal{E}$ is defined as in \eqref{def_energy}
and $\|\text{data}\|^2$ as in \eqref{data}, and $\overline{C}(T)$ can be chosen as $\bar{C}(1+T)$ with $\bar{C}$ independent of $T$.
{We now apply an obstacle / barrier argument under the assumption of small enough 
 data $\overline{C}(T)(\mathcal{E}[\pbar,\ptil,\wttil](0))+\|\text{data}\|^2)<m_0<\frac{1}{8\overline{C}(T)(T+1)}$ and choosing $\bar{m}>0$ so that 
$\frac{m_0}{1-(T+1)\,\bar{m}} = \frac{\bar{m}}{2\overline{C}(T)}$; this can be achieved by setting
$\bar{m}=\frac{1+\sqrt{1-8(T+1)\overline{C}(T)m_0}}{(T+1)}$.
This allows us to conclude that for all $t\in[0,T]$ we have $\overline{C}(T)\mathcal{E}[\pbar,\ptil,\wttil](t))<\bar{m}$ by means of a contradiction argument:
If, on the contrary there exists $t_0>0$ such that $\overline{C}(T) \mathcal{E}[\pbar,\ptil,\wttil](t_0)>\bar{m}$ and $t_0$ is the smallest time where this happens; then for all $t\in(0,t_0)$ we have
\begin{align*}
\mathcal{E}[\pbar,\ptil,\wttil](t) 
\leq m_0 +\overline{C}(T)\Bigl(\int_0^t(\mathcal{E}[\pbar,\ptil,\wttil](s))^2\, ds
+ \sup_{s\in [0,t]} (\mathcal{E}[\pbar,\ptil,\wttil](s))^2\Bigr)\\
\leq m_0+ (T+1)\,\bar{m}\,\sup_{s\in [0,t]} (\mathcal{E}[\pbar,\ptil,\wttil](s))
\end{align*}
Letting $t\nearrow t_0$ and taking the sup over $t\in[0,t_0]$ on the left hand side implies 
\begin{align*}
\sup_{t\in [0,t_0]}\mathcal{E}[\pbar,\ptil,\wttil](t) 
\leq m_0+ (T+1)\,\bar{m}\,\sup_{s\in [0,t_0]} (\mathcal{E}[\pbar,\ptil,\wttil](s))
\end{align*}
that is, after rearranging 
\begin{align*}
\sup_{t\in [0,t_0]}\mathcal{E}[\pbar,\ptil,\wttil](t) 
\leq \frac{m_0}{1-(T+1)\,\bar{m}} = \frac{\bar{m}}{2\overline{C}(T)}
\end{align*}
hence $\overline{C}(T)\mathcal{E}[\pbar,\ptil,\wttil](t_0)\leq\frac{\bar{m}}{2}$, a contradiction.
}

Thus we have obtained, for sufficiently small data, an energy estimate of the form
\[
\sup_{s\in[0,T]}\mathcal{E}[\pbar,\ptil,\wttil](t)
\leq \tilde{C}(T)\Bigl(\mathcal{E}[\pbar,\ptil,\wttil](0) 
+\|\text{data}\|^2\Bigr).
\]

In order to derive uniform in $n$ estimates from this, we need to uniformly bound the $\Omega$ dependent constants 
\[
K(\mathfrak{s},\Omega_n), \quad 
C_{H^{d/4},L^4}^{\Omega_n}, \quad 
\|\text{tr}\|_{H^1(\Omega_n)\to L^2(\Gamma_{N,n})}.
\]
by making use of the fact that $\|\tfrac{1}{\lfcn_n}\|_{L^\infty(\Gamflat)}\leq 2\|\tfrac{1}{\lref }\|$ due to closeness  $\|\lfcn_n-\lref \|_{L^\infty(\Gamflat)}\leq\tfrac12\|\tfrac{1}{\lref }\|_{L^\infty(\Gamflat)}$, cf. \eqref{ass_l0}, \eqref{ass_diffl}, \eqref{bdd_lfcn} 
(note that $K(\mathfrak{s},\Omega_n)$ just relies on the embedding constant $C_{H^{1},L^p}^{\Omega_n}$).
For the norm of the trace operator see, e.g. \cite[Theorems 15.8, 15.23]{Leoni:2009} or \cite[Theorem 1.5.1.10]{Grisvard}. 

To obtain uniform bounds on these constants we use 
\[
\begin{aligned}
\|\phi\|_{H^1(\Omega(\lfcn))}^2
&=\int_{\Omega(\lfcn)}|\phi|^2\, dx + \int_{\Omega(\lfcn)}|\nabla\phi|^2\, dx
=\int_{\Omega(\lref)}\frac{1}{\omegazero_\lfcn}|\check{\phi}|^2\, d\check{x} + \int_{\Omega(\lref)}\frac{1}{\omegazero_\lfcn}|M_\lfcn  \check{\nabla} \check{\phi}|^2\, d\check{x}\\
&\geq \frac{1}{\|\omegazero_\lfcn\|_{L^\infty}\,\min\{1,\|M_\lfcn^{-1}\|_{L^\infty}^2\}}
\|\check{\phi}\|_{H^1(\Omega(\lref))}^2
\end{aligned}
\]
\[
\begin{aligned}
\|\phi\|_{L^p(\Omega(\lfcn))}
&=\Bigl(\int_{\Omega(\lfcn)}|\phi|^p\, dx\Bigr)^{1/p}
=\Bigl(\int_{\Omega(\lref)}\frac{1}{\omegazero_\lfcn}|\check{\phi}|^p\, d\check{x}\Bigr)^{1/p}
\leq \|\tfrac{1}{\omegazero_\lfcn}\|_{L^\infty}^{1/p} \|\check{\phi}\|_{L^p(\Omega(\lref))}
\end{aligned}
\]
\[
\begin{aligned}
\|\phi\|_{L^2(\Gamma_N(\lfcn))}
&=\Bigl(\int_{\Gamma_N(\lfcn)}|\phi|^2\, dx\Bigr)^{1/2}
=\Bigl(\int_{\Gamma_N(\lref)}\omegaone_\lfcn|\check{\phi}|^2\, dS(\check{x})\Bigr)^{1/2}
&\leq \|\omegaone_\lfcn\|_{L^\infty}^{1/2} \|\check{\phi}\|_{L^2(\Gamma_N(\lref))},
\end{aligned}
\]
This implies
\[
C_{H^1,L^p}^{\Omega(\lfcn)}\leq C_{H^1,L^p}^{\Omega(\lref)}
\|\tfrac{1}{\omegazero_\lfcn}\|_{L^\infty}^{1/p} 
\|\omegazero_\lfcn\|_{L^\infty}^{1/2} \Bigl(1+\|M_\lfcn^{-1}\|_{L^\infty}^2\Bigr)^{1/2}
\]
and 
\[
\|\text{tr}\|_{H^1(\Omega(\lfcn))\to L^2(\Gamma_{N}(\lfcn))}
\leq \|\text{tr}\|_{H^1(\Omega(\lref))\to L^2(\Gamma_{N}(\lref))}
\|\omegaone_\lfcn\|_{L^\infty(\Gamflat)}^{1/2} 
\|\omegazero_\lfcn\|_{L^\infty(\Gamflat)}^{1/2} \Bigl(1+\|M_\lfcn^{-1}\|_{L^\infty(\Gamflat)}^2\Bigr)^{1/2},
\]
where the norms on the right hand side can be bounded by means of a constant multiple of $\|\lfcn\|_{W^{1,\infty}(\Gamflat)}$, 
\begin{equation}\label{l}
\|\omegazero_\lfcn\|_{L^\infty(\Gamflat)}
+\|\tfrac{1}{\omegazero_\lfcn}\|_{L^\infty(\Gamflat)}
+\|M_\lfcn^{-1}\|_{L^\infty(\Gamflat)}
+ \|\omegaone_\lfcn\|_{L^\infty(\Gamflat)}
\leq C \|\lfcn\|_{W^{1,\infty}(\Gamflat)},
\end{equation}
due to \eqref{bdd_lfcn}. 

The estimate 
$$\|\Dnu{\pbar_{n}}\|_{L^2(\zeroT H^s(\Gamma_N))}^2\leq \|g_n\|_{L^2(\zeroT H^s(\Gamma_N))}^2$$
is a trivial consequence of the identity $\Dnu{\pbar_{n}}=g_n$ that follows from $A_{\text{PDE}}(\vec{g}_n,\vec{u}_n)=0$.

We have thus proven Lemma~\ref{lem:estAPDE0}.

\newpage 
\section{Energy estimates for surjectivity; 
Proof of Lemma~\ref{lem:enest_surj}}
We here focus on the energy estimate as a key step in the proof. This together with the usual approach of making a Galerkin discretization in space and taking weak limits will allow us to prove existence of a solution to \eqref{eqn:var_L2L2_pponly} in $U_{pp}$. 
(The uniqueness step in the proof can be skipped this time, since we only need surjectivity of $A_{\textup{PDE}}'(\vec{g}^*,\vec{u}^*)$).  

Testing \eqref{eqn:var_L2L2_pponly} with 
$\qbar= \uld{\pbar}_{tt}-\bar{\lambda}\Delta \uld{\pbar}_{t}$, 
$\qtil= \uld{\ptil}_{tt}-\tilde{\lambda}\Delta \uld{\ptil}_{t}$, 
$\mu_N=[(\bar{\lambda}+b)\partial_t+c^2]^*\Bigl\{
(-\Delta_{\Gamma_N})^s[(\bar{\lambda}+b)\partial_t+c^2]\Dnu{\uld{\pbar}}+\uld{\pbar}_{tt}\Bigr\}$, 
$\mu_{pl}=[(\tilde{\lambda}(1+\mathfrak{a})+b)\partial_t+c^2]^*\Bigl\{
(-\Delpl)^s[(\tilde{\lambda}(1+\mathfrak{a})+b)\partial_t+c^2]\Dnu{\uld{\ptil}}+\uld{\ptil}_{tt}\Bigr\}$
(with $s\in(0,\frac12)$ and the superscript $*$ denoting the adjoint with respect to $L^2(\zeroT L^2(\Gamma_N))$ or $L^2(\zeroT L^2(\Gampl))$)
we obtain the energy identity
\begin{equation}\label{enid_pp}    
\begin{aligned}
&\tfrac12\|\sqrt{\bar{\lambda}+b}\,\nabla\uld{\pbar}_{t}(t)\|_{L^2(\Omega)}^2
+\tfrac{\bar{\lambda}c^2}{2}\|\Delta\uld{\pbar}(t)\|_{L^2(\Omega)}^2
+\tfrac{1}{2}\|\sqrt{c^2\betaabs+(\bar{\lambda}+b)\gammaabs}\uld{\pbar}_t(t)\|_{L^2(\Gamma_a)}^2
\\
&+\int_0^t\Bigl(
\|\uld{\pbar}_{tt}\|_{L^2(\Omega)}^2
-c^2\|\nabla\uld{\pbar}_{t}\|_{L^2(\Omega)}^2
+\bar{\lambda}b \|\Delta\uld{\pbar}_{t}\|_{L^2(\Omega)}^2
\\&\qquad
+c^2\|\sqrt{\gammaabs}\,\uld{\pbar}_t\|_{L^2(\Gamma_a)}^2
+\|\sqrt{(\bar{\lambda}+b)\betaabs}\,\uld{\pbar}_{tt}\|_{L^2(\Gamma_a)}^2
+\|[(\bar{\lambda}+b)\partial_t+c^2]\,\Dnu\uld{\pbar}\|_{H^s(\Gamma_N)}^2
\Bigr)\, ds\\
&+\tfrac12\|\sqrt{\tilde{\lambda}(1+\mathfrak{a})+b}\,\nabla\uld{\ptil}_{t}(t)\|_{L^2(\Omega)}^2
+\tfrac{\tilde{\lambda}c^2}{2}\|\Delta\uld{\ptil}(t)\|_{L^2(\Omega)}^2
+\tfrac{1}{2}\|\sqrt{c^2\betaabs+(\tilde{\lambda}(1+\mathfrak{a})+b)\gammaabs}\uld{\ptil}_t(t)\|_{L^2(\Gamma_a)}^2
\\
&+\int_0^t\Bigl(
\|\sqrt{1+\mathfrak{a}}\,\uld{\ptil}_{tt}\|_{L^2(\Omega)}^2
-c^2\|\nabla\uld{\ptil}_{t}\|_{L^2(\Omega)}^2
+\tilde{\lambda}b \|\Delta\uld{\ptil}_{t}\|_{L^2(\Omega)}^2
\\&\qquad
+c^2\|\sqrt{\gammaabs}\,\uld{\ptil}_t\|_{L^2(\Gamma_a)}^2
+\|\sqrt{(\tilde{\lambda}(1+\mathfrak{a})+b)\betaabs}\,\uld{\ptil}_{tt}\|_{L^2(\Gamma_a)}^2
+\|[(\tilde{\lambda}(1+\mathfrak{a})+b)\partial_t+c^2]\,\Dnu\uld{\ptil}\|_{H^s(\Gampl)}^2
\Bigr)\, ds\\
&= \int_0^t\int_\Omega \Bigl( 
f_{\pbar}\,(\uld{\pbar}_{tt}-\bar{\lambda}\Delta \uld{\pbar}_{t}) + \tilde{f}_{\ptil}(\uld{\pbar},\uld{\ptil})\,(\uld{\ptil}_{tt}-\tilde{\lambda}\Delta \uld{\ptil}_{t})
+\tfrac{\tilde{\lambda}}{2}\mathfrak{a}_t|\nabla\uld{\ptil}_t|^2\Bigr)\, dx\, ds \\
&\quad-c^2 \int_\Omega \bigl(
\nabla\uld{\pbar}(t)\cdot\nabla \uld{\pbar}_t(t)
+\nabla\uld{\ptil}(t)\cdot\nabla\uld{\ptil}_t(t)\bigr)\, dx,
    \end{aligned}
\end{equation}
with $\tilde{f}_{\ptil}(\uld{\pbar},\uld{\ptil})={f}_{\ptil}
-\mathfrak{b} (\uld{\pbar}+\uld{\ptil})_t 
-\mathfrak{c}(\uld{\pbar}+\uld{\ptil})
-\mathfrak{a}\uld{\pbar}_{tt}$.

Note that we get basically the same energy identity for $\uld{\pbar}$ and $\uld{\ptil}$, just with the roles of $\Gamma_N$ and $\Gampl$ interchanged and with some additional space and time dependent coefficients in case of $\uld{\ptil}$.

We also point to the fact that the left hand side term 
$\int_0^t \|[(\bar{\lambda}+b)\partial_t+c^2]\,\Dnu{\uld{\pbar}}\|_{H^s(\Gamma_N)}^2\, ds$ provides us with a bound on both 
$\|\Dnu\uld{\pbar}\|_{L^2(\zeroT H^s(\Gamma_N))}^2$ and
$\|\Dnu\uld{\pbar}_t\|_{L^2(\zeroT H^s(\Gamma_N))}^2$;
likewise for 
$\|\Dnu\uld{\ptil}\|_{L^2(\zeroT H^s(\Gampl))}^2$ and
$\|\Dnu\uld{\ptil}_t\|_{L^2(\zeroT H^s(\Gampl))}^2$ from the left hand side term
$\int_0^t \|[(\tilde{\lambda}(1+\mathfrak{a})+b)\partial_t+c^2]\,\Dnu\uld{\ptil}\|_{H^s(\Gampl)}^2\, ds$.

For sufficiently large $\bar{\lambda}$, $\tilde{\lambda}\, >0$ together with Sobolev's embeddings and Young's as well as Gronwall's inequalities this yields the energy estimate
\begin{equation}\label{enest_pp}
    \begin{aligned}
&\sum_{\uld{p}\in\{\uld{\pbar},\uld{\ptil}\}}
\|\uld{p}_{tt}\|_{L^2(0,t;L^2(\Omega))}^2
+\|\Delta\uld{p}_{t}\|_{L^2(0,t;L^2(\Omega))}^2
+\|\nabla\uld{p}_{t}\|_{L^\infty(0,t;L^2(\Omega))}^2
+\|\Delta\uld{p}\|_{L^\infty(0,t;L^2(\Omega))}^2\\
&\phantom{\sum_{\uld{p}\in\{\uld{\pbar},\uld{\ptil}\}}}
+\|\Dnu\uld{p}_t\|_{L^2(0,t;H^s(\partial\Omega))}^2
+\|\Dnu\uld{p}\|_{L^2(0,t;H^s(\partial\Omega))}^2
\leq C(T) \sum_{{p}\in\{{\pbar},{\ptil}\}}
\|f_{p}\|_{L^2(0,t;L^2(\Omega))}^2,
    \end{aligned}
\end{equation}
under the smallness assumptions \eqref{smallness_coeffs}, which also implies nondegeneracy $\frac{1}{1+\mathfrak{a}}\, \in L^\infty(\zeroT L^\infty(\Omega))$.

\section{Funding and/or Conflicts of interests/Competing interests}
This work was supported by the American University of Sharjah (grant number FRG23-E-S70). This work was also partially supported by the Austrian Science Fund (FWF) (grant number [10.55776/P36318]).
\\
The authors have no relevant financial or non-financial conflicts or competing interests to disclose.
\\
All authors read and approved the final manuscript.

\end{document}